\newtheorem{theorem}{Theorem}[section]
\newtheorem{corollary}[theorem]{Corollary}
\newtheorem{lemma}[theorem]{Lemma}
\theoremstyle{definition}
\newtheorem{definition}[theorem]{Definition}
\theoremstyle{remark}
\newtheorem{remark}[theorem]{Remark}
\numberwithin{equation}{section}
\DeclareMathOperator{\RE}{Re} \DeclareMathOperator{\IM}{Im}\DeclareMathOperator{\sech}{sech}\DeclareMathOperator{\arctanh}{arctanh}
\begin{document}

\title{On starlike functions associated with a bean shaped domain}
\thanks{The second author is supported by The Council of Scientific and Industrial Research(CSIR). Ref.No.:08/133(0030)/2019-EMR-I}
\author{S. Sivaprasad Kumar}
\address{Department of Applied Mathematics, Delhi Technological University, Delhi--110042, India}
\email{spkumar@dce.ac.in}
\author[Pooja Yadav]{Pooja Yadav}
\address{Department of Applied Mathematics, Delhi Technological University, Delhi--110042, India}
\email{poojayv100@gmail.com}

\subjclass[2010]{30C45,30C50, 30C80}

\keywords{Analytic functions, Convex and starlike functions, Subordination, Radius problems}
\begin{abstract}
 In this paper, we introduce and explore a new class of starlike functions denoted by $\mathcal{S}^*_{\mathfrak{B}}$, defined as follows:
 $$\mathcal{S}^*_{\mathfrak{B}}=\{f\in \mathcal{A}:zf'(z)/f(z)\prec \sqrt{1+\tanh{z}}=:\mathfrak{B}(z)\}.$$
 Here, $\mathfrak{B}(z)$ represents a mapping from the unit disk onto a bean-shaped domain. Our study focuses on understanding the characteristic properties of both $\mathfrak{B}(z)$ and the functions in $\mathcal{S}^*_{\mathfrak{B}}$. We derive sharp conditions  under which  $\psi(p)\prec\sqrt{1+\tanh(z)}$ implies $p(z)\prec ((1+A z)/(1+B z))^\gamma$, where $\psi(p)$ is defined as:
 \begin{equation*}
(1-\alpha)p(z)+\alpha p^2(z)+\beta \frac{zp'(z)}{p^k(z)}\quad \text{and}\quad (p(z))^\delta+\beta \frac{zp'(z)}{(p(z))^k}.
\end{equation*}
Additionally, we establish inclusion relations involving $\mathcal{S}^*_{\mathfrak{B}}$ and derive precise estimates for the sharp radii constants of $\mathcal{S}^*_{\mathfrak{B}}$. \end{abstract}
\maketitle

\section{Introduction}

	 Let $\mathbb{D}_r$ denote the open disk in the complex plane with the origin as its center and radius $r$. Specifically, $\mathbb{D}_1=:\mathbb{D}$. Let $\mathcal{A}$ be the class of normalized analytic functions $f(z)$ in $\mathbb{D}$ with the Taylor series expansion $f(z)=z+\sum_{n=2}^{\infty}a_{n}z^n.$ The univalence of $f(z)$ can be determined using the Noshiro and Warschawski criterion, which states that for some $\gamma$ such that $|\gamma|<\pi/2$, if $\RE e^{i \gamma}f'(z)>0$ for all $z$ in the convex domain $D$, then $f(z)$ is univalent in $D$.
The class $\mathcal{S}$ is a subclass of $\mathcal{A}$ consisting of all univalent functions. It plays a vital role in Geometric Function Theory due to its significant applications in areas such as image processing, signal processing, and more. Two well-studied subclasses of $\mathcal{S}$ are the classes of convex and starlike functions, denoted by $\mathcal{C}$ and $\mathcal{S}^*$, respectively. They are characterized analytically as $\RE(zd(\log(zf'(z))))>0$ and $\RE(zd(\log f(z)))>0$, respectively.
In 1992, Ma and Minda \cite{maminda}  unified various subclasses of $\mathcal{S}^*$, by defining the class $\mathcal{S}^*(\psi)$ as follows:

\begin{equation}\label{sp}
	\mathcal{S}^{*}(\psi)=\bigg\{f\in\mathcal{S}:\frac{zf'(z)}{f(z)}\prec\psi(z)\bigg\}.
	\end{equation}
Here, $\psi$ is an analytic, univalent, and starlike function with respect to $\psi(0)=1$. Additionally, $\psi$ satisfy the conditions $\RE\psi(z)>0$, $\psi'(0)>0$, and $\psi(\mathbb{D})$ is symmetric with respect to the real axis. Specializing the function $\psi(z)$ as described in (\ref{sp}) leads to the formation of several important function classes. Among these are $\mathcal{S}^*(\alpha)$, defined by $\mathcal{S}^*((1+(1-2\alpha)z)/(1-z))$, and $\mathcal{SS}^*(\beta)$, represented by $\mathcal{S}^*(((1+z)/(1-z))^\beta)$. Additionally, there are several other subclasses that have been explored by researchers, each having interesting geometrical properties, as illustrated in Table 1.
 \begin{table}[ht]
  \caption{Subclasses of starlike functions obtained for different Choices of $\psi(z)$} 
  \centering 
  \begin{tabular}{lll} 
  \hline 
    {\bf{Class}}  & \textbf{$\psi(z)$} &  \textbf{Reference}      \\ [0.5ex] 
    \hline 
     $\mathcal{S}^*(A,B)$     &   $(1+Az)/(1+Bz)$, $-1\leq B<A\leq1$ &  \cite{janowski} W. Janowski.  \\
      $\mathcal{S}^*_{\varrho}$   &   $1+z e^z$   &   \cite{gangania1}  S. S. Kumar and G. Kamaljeet   \\
     $\mathcal{S}^*_{SG}$    &   $2/(1+e^{-z})$   &  \cite{pri}  P. Goel and S. Kumar   \\
     $\mathcal{S}^*_{e}$       &  $e^{z}$  &   \cite{exp}  R. Mendiratta et al. \\
     $\mathcal{S}^*_{\leftmoon}$                &  $z+\sqrt{1+z^2}$  &  \cite{raina}  R. K. Raina and J Sok\'{o}\l \\
     $\mathcal{S}^*_{L}$     &   $\sqrt{1+z}$  &   \cite{sokol1}  J. Sok\'{o}\l\; and Stankiewicz  \\


    \hline 
  \end{tabular}
\end{table}

Moreover, these classes have been thoroughly investigated in prior studies such as \cite{kanaga, exact, oblique, poonam}. Inspired by the findings presented in these articles, we introduce a new class associated with the bean-shaped domain represented by the function $\mathfrak{B}(z):=\sqrt{1+\tanh{z}}$.
\begin{definition}
Let $\mathcal{S}^*_{\mathfrak{B}}$ be the class of normalized starlike functions, defined as
follows:
	\begin{equation*}
\mathcal{S}^*_{\mathfrak{B}}:=\left\{f\in\mathcal{S}:\dfrac{zf'(z)}{f(z)}\prec\mathfrak{B}(z):=\sqrt{1+\tanh{z}}\right\}.
	\end{equation*} 
 \end{definition}
Note that $\mathfrak{B}(z):=\sqrt{1+\tanh{z}}=\sqrt{2/(1+e^{-2z})}$ and $\mathfrak{B}(z)$ conformally maps $\mathbb{D}$ onto the region $$\Omega_{\mathfrak{B}}:=\left\{w\in\mathbb{C}:\left|\log\left(\dfrac{w^2}{2-w^2}\right)\right|<2\right\}.$$  The functions in the classes $\mathcal{S}^{*}_{\mathfrak{B}}$ can be represented through an integral formula as follows:\\
	 A function $f\in\mathcal{S}^{*}_{\mathfrak{B}}$ if and only if there exists an analytic function $p(z)\prec\mathfrak{B}(z)$ such that
	\begin{equation}\label{4}
	f(z)=z \exp\int_{0}^{z}\frac{p(t)-1}{t}dt.
	\end{equation}
	By taking $p(z) =\mathfrak{B}(z)$ in \eqref{4}, we obtain an extremal function, defined as:
	\begin{equation}\label{5}
	f_{0}(z)=z+\frac{z^2}{2}+\frac{z^{3}}{16}-\frac{13z^{4}}{288}-\frac{11z^{5}}{1152}+\cdots\in \mathcal{S}^{*}_{\mathfrak{B}}.
	\end{equation} 
The class $\mathcal{S}^{*}_{\mathfrak{B}}$ and its integral representation \eqref{4} offer valuable insights into the properties and behavior of starlike functions associated with the intriguing bean-shaped domain. The extremal function $f_{0}(z)$ represents a notable example within this class, paving the way for further exploration and analysis.
The present investigation explores the properties of $\mathfrak{B}(z)$ and the class $\mathcal{S}^*_{\mathfrak{B}}$. It establishes inclusion relations between $\mathcal{S}^*_{\mathfrak{B}}$ and well-known classes, accompanied by diagrammatic representations. The study also derives sharp conditions  under which  $\psi(p)\prec\sqrt{1+\tanh(z)}$ implies $p(z)\prec ((1+A z)/(1+B z))^\gamma$, where $\psi(p)$ is defined as:
 \begin{equation*}
(1-\alpha)p(z)+\alpha p^2(z)+\beta \frac{zp'(z)}{p^k(z)}\quad \text{and}\quad (p(z))^\delta+\beta \frac{zp'(z)}{(p(z))^k}.
\end{equation*} Additionally, find sharp $\mathcal{S}^*_{\mathfrak{B}}$-radii estimates for certain geometrically
defined function classes available in the literature.

\section{Geometric Properties}
The function $\mathfrak{B}(z)=\sqrt{1+\tanh{z}}$ is analytic, and its derivative satisfies $\operatorname{Re}(\mathfrak{B}'(z))>\mathfrak{B}'(0)=0.158$, along with $\operatorname{Re}\left(\frac{z\mathfrak{B}'(z)}{\mathfrak{B}(z)-\mathfrak{B}(0)}\right)>0.483$. Consequently, $\mathfrak{B}(z)$ is both univalent and starlike with respect to the point $\mathfrak{B}(0)=1$.

For $|z|= r$, the expression of $\mathfrak{B}(re^{i\theta})$ is given by:
\begin{align}\label{beanbary}\mathfrak{B}(re^{i\theta})&=\sqrt{\frac{2}{1+e^{-2r(\cos{\theta}+i\sin{\theta})}}}\nonumber\\
	&=\dfrac{\sqrt{2}\left(\cos(T_{r,\theta})+i\sin(T_{r,\theta})\right)}{\left(\left(1+e^{-2r\cos{\theta}}\cos(2r\sin{\theta})\right)^2+e^{-4r\cos{\theta}}\sin^{2}(2r\sin{\theta})\right)^{\frac{1}{4}}}\nonumber\\
	&=\frac{e^{r\cos{\theta}}\left(M_{r,\theta}+\sqrt{M_{r,\theta}^2+N_{r,\theta}^2}+i\; N_{r,\theta}\right)}{\sqrt{M_{r,\theta}^2+N_{r,\theta}^2}\sqrt{M_{r,\theta}+\sqrt{M_{r,\theta}^2+N_{r,\theta}^2}}},
	\end{align}
where $\theta\in[0,2\pi)$ and
\begin{gather*}
M_{r,\theta}=e^{2r\cos{\theta}}+\cos(2r\sin{\theta}),\quad N_{r,\theta}=\sin(2r\sin{\theta}) \
\intertext{and}
T_{r,\theta}=\frac{1}{2}\arctan\left(\frac{N_{r,\theta}}{M_{r,\theta}}\right).
\end{gather*}
In particular, let $M_\theta:=M_{1,\theta}$ and $N_\theta:=N_{1,\theta}$. The function $\mathfrak{B}$ is symmetric about the real axis as $\operatorname{Re}(\mathfrak{B}(r,\theta))=\operatorname{Re}(\mathfrak{B}(r,-\theta))$. Consequently, $\mathcal{S}^*_{\mathfrak{B}}$ is a Ma-Minda starlike class.

We begin by establishing the following result, which addresses the radius of convexity of the function $\mathfrak{B}(z)$:

\begin{theorem}
$\mathfrak{B}(z)$ is convex for $|z|\leq r^*$, where $r^*\simeq0.7074$ is the smallest positive root of $1+2e^{2r}+e^{4r}-(-1+e^{2r}+2e^{4r})r=0$.
\end{theorem}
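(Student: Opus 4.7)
The plan is to analyse $1 + z\mathfrak{B}''(z)/\mathfrak{B}'(z)$ on the circle $|z|=r$ and identify the largest $r$ for which its real part remains non-negative. I start by computing the logarithmic derivative of $\mathfrak{B}'$: since $\log \mathfrak{B}'(z) = -\log 2 - 2\log\cosh z - \tfrac12 \log(1+\tanh z)$, differentiating and using the identity $\sech^2 z/(1+\tanh z) = 1 - \tanh z$ collapses everything into the remarkably clean form
\begin{equation*}
\frac{\mathfrak{B}''(z)}{\mathfrak{B}'(z)} = -\frac{1+3\tanh z}{2}, \qquad 1 + \frac{z\mathfrak{B}''(z)}{\mathfrak{B}'(z)} = 1 - \frac{z(1+3\tanh z)}{2}.
\end{equation*}

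Setting $z = re^{i\theta}$, $u = r\cos\theta$, $v = r\sin\theta$, and inserting the standard decomposition $\tanh(u+iv) = [\sinh 2u + i\sin 2v]/[\cosh 2u + \cos 2v]$, I obtain the explicit boundary expression
\begin{equation*}
\Phi(r,\theta) := \RE\Bigl(1 + \tfrac{z\mathfrak{B}''(z)}{\mathfrak{B}'(z)}\Bigr) = \frac{(2-u)(\cosh 2u + \cos 2v) - 3u\sinh 2u + 3v\sin 2v}{2(\cosh 2u + \cos 2v)}.
\end{equation*}
Because $\mathfrak{B}$ has real Taylor coefficients, $\Phi$ is even in $\theta$, so attention may be restricted to $\theta \in [0,\pi]$. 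The crux is to show that, for each $r$ in the regime of interest, $\Phi(r,\cdot)$ attains its minimum at $\theta = 0$; I would verify this by differentiating in $\theta$ and checking that the sole interior critical point on $(0,\pi)$ is a maximum, thereby reducing matters to comparing the endpoint values $\Phi(r,0)$ and $\Phi(r,\pi)$, both positive for small $r$ with $\Phi(r,0) < \Phi(r,\pi)$ in the relevant range.

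Once the minimum has been localised at $\theta = 0$, the boundary equation $\Phi(r,0)=0$ reads $(2-r)(\cosh 2r + 1) = 3r\sinh 2r$; applying the half-angle identities $\cosh 2r + 1 = 2\cosh^2 r$ and $\sinh 2r = 2\sinh r\cosh r$ reduces this to $r(1 + 3\tanh r) = 2$, and then clearing denominators using $\tanh r = (e^{2r}-1)/(e^{2r}+1)$ and multiplying through by $(1+e^{2r})$ produces exactly
\begin{equation*}
1 + 2e^{2r} + e^{4r} - r\bigl(-1 + e^{2r} + 2e^{4r}\bigr) = 0,
\end{equation*}
whose smallest positive root is $r^* \simeq 0.7074$. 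Strict decrease of $\Phi(r,0)$ in $r$ on $(0,r^*]$, combined with the angular minimisation, then yields convexity of $\mathfrak{B}$ throughout $|z|\leq r^*$. The principal obstacle is the angular argument: while the symmetry and numerical evidence make it plain that $\theta = 0$ is the worst direction, a rigorous sign analysis of $\partial_\theta \Phi$ on $(0,\pi)$ is where the bulk of the technical effort lies.
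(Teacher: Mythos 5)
Your proposal is correct and follows essentially the same route as the paper: you compute $\RE\left(1+z\mathfrak{B}''(z)/\mathfrak{B}'(z)\right)$ on $|z|=r$ (your $1-z(1+3\tanh z)/2$ is the paper's $(1+z+e^{2z}(1-2z))/(1+e^{2z})$ in disguise), reduce to the direction $\theta=0$ by an angular minimisation, and arrive at an equation equivalent to the paper's after multiplying through by $1+e^{2r}$. The one step you flag as outstanding---that the minimum over $\theta$ is attained at $\theta=0$---is likewise only asserted, not proved, in the paper.
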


\begin{proof}
Considering $\mathfrak{B}(z)=\sqrt{\frac{2}{1+e^{-2z}}}$ and $z=re^{i t}$, We obtain 
\begin{align*}
\RE\left(1+\dfrac{z\mathfrak{B}''(z)}{\mathfrak{B}'(z)}\right) &= \RE\left(\dfrac{1+z+e^{2z}(1-2z)}{1+e^{2z}}\right)\\
&=\dfrac{G_n(r,t)}{G_d(r,t)}=:G(r,t),
\end{align*}
where
{\small
\begin{equation*}
G_n(r,t):=1+r\cos{t}+e^{4r\cos{t}}(1-2r\cos{t})+e^{2r\cos{t}}(2\cos(2r\sin{t})+r(\cos(t-2r\sin{t})-2\cos(t+2r\sin{t})))
\end{equation*}
}and
\begin{equation*}
G_d(r,t):=(1+ e^{2\cos{t}}\cos^2(2\sin{t})+e^{4\cos{t}}\sin^2(2\sin{t})).
\end{equation*}
Since $G(r,t)=G(r,-t)$,  $G$ is symmetric about the real axis, we can focus on $G$ for $t\in[0,\pi]$. To find the radius of convexity of $\mathfrak{B}(z)$, we need to find the smallest $r^*\in(0,1)$ such that $G(r,t)>0$ for $r\in(0,r^*)$. Since $G_d(r,t)>0$ for $r\in(0,1)$ and $t\in[0,\pi]$, it suffices to show that $G_n(r,t)>0$ for $r\in(0,r^*)$.
Notably, for any fixed $r$, $G_n(r,t)$ attains its minimum at $t=0$, resulting in the minimum value $G_n(r,0)=1+2e^{2r}+e^{4r}-(-1+e^{2r}+2e^{4r})r$. Since $G_n(0,0)>0$, and $r^*$ is the smallest positive root of $G_n(r,0)=0$, this directly establishes the desired result.
\end{proof}

Using elementary calculus and (\ref{beanbary}), we can derive our next lemma, which provides sharp bounds associated with $\mathfrak{B}(z)$ for $|z|=1$.

\begin{lemma}{\label{bds}} For $|z|=1$, the following sharp bounds hold for $\mathfrak{B}(z)$:
\begin{enumerate}
\item[(i)]
If we define $R(\theta)=\frac{e^{\cos{\theta}}\sqrt{M_\theta+\sqrt{M_\theta^2+N_\theta^2}}}{\sqrt{M_\theta^2+N_\theta^2}}$, then $\sqrt{\frac{2}{1+e^2}}\leq\operatorname{Re}(\mathfrak{B}(z))\leq R(\theta_{0})$, where $R(\theta_{0})\simeq1.38846$ at $\theta_{0}\simeq1.15197$.
\item[(ii)] If we define $I(\theta)=\frac{e^{\cos{\theta}}N_\theta}{\sqrt{M_\theta^2+N_\theta^2}\sqrt{M_\theta+\sqrt{M_\theta^2+N_\theta^2}}}$, then $|\operatorname{Im}(\mathfrak{B}(z))|\leq I(\theta_{1})$, where $I(\theta_{1})\simeq0.69949$ at $\theta_{1}\simeq1.72466$.
\item[(iii)] If we define $A(\theta)=\frac{2e^{2\cos{\theta}}}{\sqrt{M_\theta^2+N_\theta^2}}$, then $\sqrt{\frac{2}{1+e^2}}\leq|\mathfrak{B}(z)|\leq \sqrt{A(\theta_{2})}$, where $A(\theta_{2})\simeq2.0694$ at $\theta_{2}\simeq1.31364.$
\item[(iv)] $|\operatorname{arg}(\mathfrak{B}(z))|\leq \dfrac{\beta \pi}{2},$ where  $\beta=0.43849139.$
\end{enumerate}
\end{lemma}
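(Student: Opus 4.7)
The plan is to read each bound directly off the explicit boundary parametrization in (\ref{beanbary}) with $r=1$, reducing every claim to a one-variable optimization of a smooth function on $[0,\pi]$.

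First I extract the four scalar quantities on $|z|=1$. Because the denominator $\sqrt{M_\theta^2+N_\theta^2}\sqrt{M_\theta+\sqrt{M_\theta^2+N_\theta^2}}$ in (\ref{beanbary}) is a positive real number, the real and imaginary parts of $\mathfrak{B}(e^{i\theta})$ separate immediately; using the identity $(M+\sqrt{M^2+N^2})/\sqrt{M+\sqrt{M^2+N^2}}=\sqrt{M+\sqrt{M^2+N^2}}$ gives $\operatorname{Re}\mathfrak{B}(e^{i\theta})=R(\theta)$ and $\operatorname{Im}\mathfrak{B}(e^{i\theta})=I(\theta)$. For the modulus, a direct calculation from $\mathfrak{B}(z)=\sqrt{2/(1+e^{-2z})}$ gives $|1+e^{-2z}|^2=e^{-4\cos\theta}(M_\theta^2+N_\theta^2)$, whence $|\mathfrak{B}(e^{i\theta})|^2=A(\theta)$. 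For the argument, the first line of (\ref{beanbary}) yields $\arg\mathfrak{B}(e^{i\theta})=T_{1,\theta}=\tfrac{1}{2}\arctan(N_\theta/M_\theta)$.

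Next I optimize. Since $M_\theta$, $N_\theta^2$, and $\sqrt{M_\theta^2+N_\theta^2}$ are all even in $\theta$, the functions $R$, $A$, and $|I|$ are even, so it suffices to study $\theta\in[0,\pi]$. For (i) and (iii), evaluating at the endpoint $\theta=\pi$ (where $N_\pi=0$ and $M_\pi=1+e^{-2}$) gives $R(\pi)=|\mathfrak{B}(-1)|=\sqrt{2/(1+e^2)}$, and comparison with $\theta=0$ rules that endpoint out as the minimum; the maxima come from the unique interior roots of $R'(\theta)=0$ and $A'(\theta)=0$, located numerically at $\theta_0\simeq 1.15197$ and $\theta_2\simeq 1.31364$ and confirmed as maxima by examining the sign change of the derivative. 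For (ii), $I(\theta)$ is odd in $\sin\theta$ and vanishes at $\theta=0,\pi$, so $|I|$ attains its maximum at the interior root $\theta_1\simeq 1.72466$ of $I'(\theta)=0$. For (iv), after verifying $M_\theta>0$ throughout $[0,\pi]$ so that the principal branch of $\arctan$ is valid, maximizing $\tfrac{1}{2}\arctan(N_\theta/M_\theta)$ reduces to maximizing $N_\theta/M_\theta$; the resulting critical equation yields the bound $\beta\pi/2$ with $\beta\simeq 0.43849139$.

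The only genuine obstacle is computational: each critical-point equation is transcendental with no closed-form root, so all four constants $\theta_0,\theta_1,\theta_2,\beta$ must be produced by reliable numerical root-finding on the correct interval, together with a sign check on the second derivative (or a tabulation of the first derivative) to exclude local minima or saddle behaviour. Sharpness of every bound is then automatic, since each extremum is attained at an explicit point of $|z|=1$.
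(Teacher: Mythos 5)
Your proposal is correct and follows exactly the route the paper intends: the paper gives no written proof of this lemma, saying only that it follows from ``elementary calculus and (\ref{beanbary})'', and your argument is precisely that computation --- extracting $R(\theta)$, $I(\theta)$, $A(\theta)$ and $T_{1,\theta}$ from the boundary parametrization at $r=1$, exploiting the symmetry in $\theta$, and locating the extrema numerically (your identities for $\operatorname{Re}\mathfrak{B}$, $\operatorname{Im}\mathfrak{B}$, $|\mathfrak{B}|^2$ and $\arg\mathfrak{B}$ all check out, and the minima at $\theta=\pi$ agree with the paper's Remark~\ref{minrem}). The only caveat is that your assertion of a \emph{unique} interior critical point for $R$ and $A$ is itself a numerical claim that must be verified alongside the root-finding, but you flag this, so the proposal matches the paper's (implicit) proof in both substance and level of rigour.
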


This lemma provides a comprehensive set of bounds for the real and imaginary parts, modulus, and argument of $\mathfrak{B}(z)$ when $|z|=1$. These sharp bounds are expressed in terms of trigonometric and exponential functions and offer valuable insights into the behavior of $\mathfrak{B}(z)$ on the unit circle.
 \begin{figure}[h]
\begin{tabular}{c}
\includegraphics[scale=0.3]{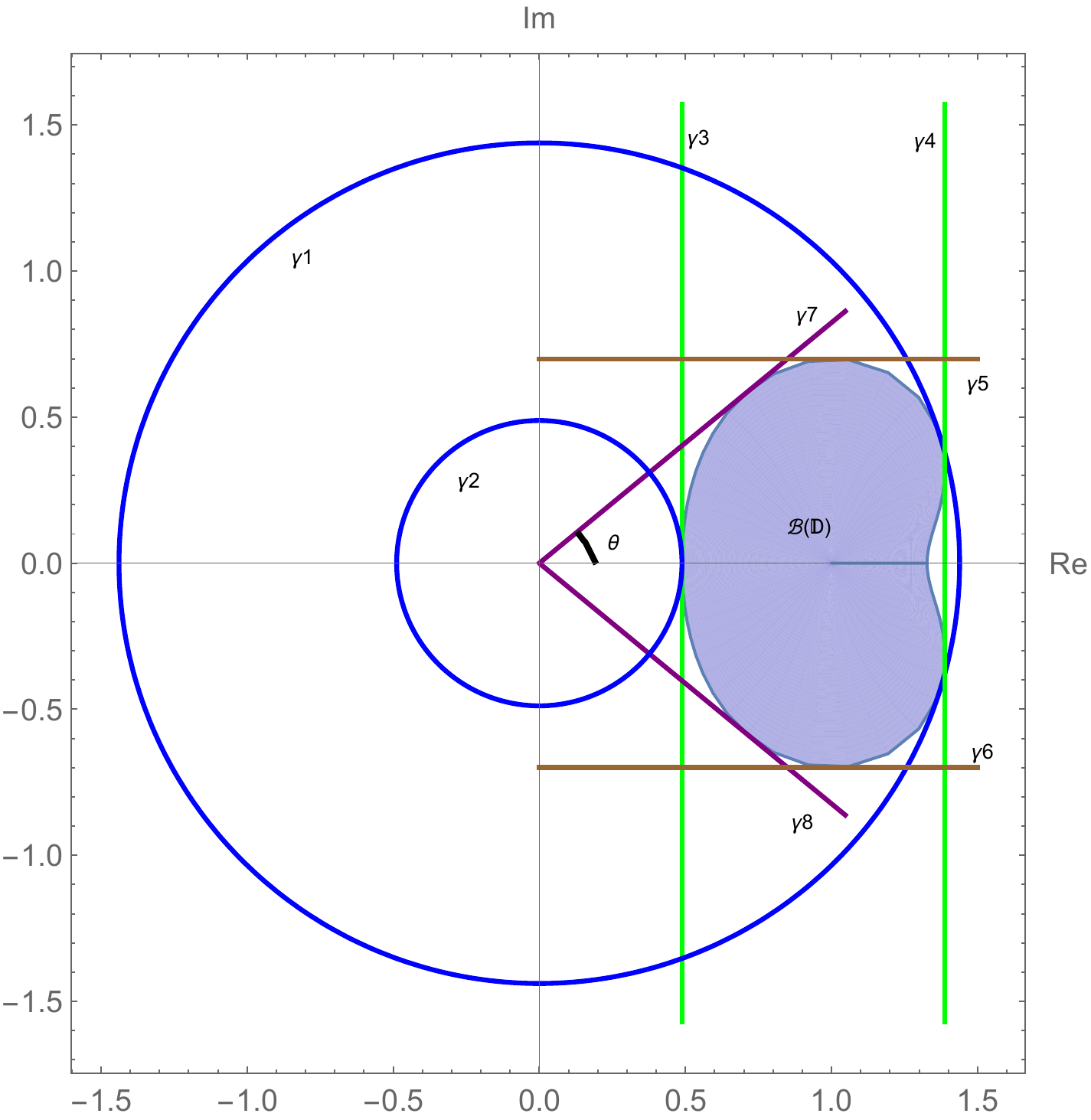}
\end{tabular}
\begin{tabular}{l}
\parbox{0.25\linewidth}{
{\bf \underline{Legend} - }\\
$\mathfrak{B}(z)=\sqrt{1+\tanh{z}}$}\\\vspace{0.15cm}
$\gamma_{1}:= \{w\in\mathbb{C}:|w|=2.0694\}$\\ \vspace{0.15cm}  
$\gamma_{2}:=  \{w\in\mathbb{C}:|w|=\sqrt{2/(1+e^2)}\}$\\ \vspace{0.15cm}
$\gamma_{3}:=\{w\in\mathbb{C}:\RE{w}=\sqrt{2/(1+e^2)}\}$ \\ \vspace{0.15cm}  
$\gamma_{4}:= \{w\in\mathbb{C}:\RE{w}=1.38846\} $\\ \vspace{0.15cm}
$\gamma_{5}:= \{w\in\mathbb{C}:\IM{w}=0.69949,\;\RE{w}>0\}$ \\ \vspace{0.15cm}
$\gamma_{6}:= \{w\in\mathbb{C}:\IM{w}=-0.69949,\;\RE{w}>0\}$\\ \vspace{0.15cm}
$\arg{\gamma_{7}}=-\arg{\gamma_8}=\theta,$\\ \vspace{0.15cm} $\theta\simeq39.4642^\circ\simeq0.438491\pi/2$ 
\end{tabular}

\caption{Graph indicating the sharpness of various bounds  (in context of Lemma \ref{bds}) associated with  $\mathfrak{B}(\mathbb{D}).$}
\label{f2}
\end{figure}

 \begin{remark}\label{minrem}
It is evident that within each circle $|z| = r < 1$, the minimum value of both $\RE \mathfrak{B}(z)$ and $|\mathfrak{B}(z)|$ occurs precisely at the point $z = -r$.
\end{remark}
 Now using results in \cite{maminda}, we have the following basic results for functions in $\mathcal{S}^*_{\mathfrak{B}}$:
 \begin{theorem}
    Let $f\in\mathcal{S}^*_{\mathfrak{B}}$ and $f_0$ be as defined in (\ref{5}). For any $|z|=r<1$, the following fundamental results hold:
      \begin{enumerate}
      \item[(i)]  $\frac{f(z)}{z}\prec \frac{f_0(z)}{z}$ and $\frac{zf'(z)}{f(z)}\prec \frac{zf_0'(z)}{f_0(z)}$ (Subordination Theorem).
          \item[(ii)] $-f_0(-r)\leq|f(z)|\leq f_0(r)$ (Growth Theorem).
          \item[(iii)] $|\arg\left(\frac{f(z)}{z}\right)|\leq\max_{|z|=r}\arg\left(\frac{f_0(z)}{z}\right)$ (Rotation Theorem). 
          \item[(iv)]  Either $f$ is a rotation of $f_0$ or $f(\mathbb{D})\supset\{w\in\mathbb{C}:|w|\leq -f_0(-1)\} $ (Covering Theorem).
      \end{enumerate}
     Equality holds for some non-zero $z$ in $(ii)$ and $(iii)$ if and only if $f$ is a rotation of $f_0.$
  \end{theorem}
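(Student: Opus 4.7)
The plan is to invoke the Ma--Minda machinery from \cite{maminda} essentially verbatim. The earlier development of Section~2 has already verified that $\mathfrak{B}(z)=\sqrt{1+\tanh z}$ fulfils every hypothesis Ma and Minda impose on $\psi$: analytic, univalent, starlike with respect to $\mathfrak{B}(0)=1$, with $\RE\mathfrak{B}>0$, $\mathfrak{B}'(0)>0$, and $\mathfrak{B}(\mathbb{D})$ symmetric about the real axis. Consequently all four statements are instances of the general Ma--Minda theorems; the only work specific to $\mathfrak{B}$ is identifying the extremal points so that the bounds can be phrased through $f_0(\pm r)$.

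\textbf{Step 1 (part (i)).} Set $p(z)=zf'(z)/f(z)$, so the definition of $\mathcal{S}^*_{\mathfrak{B}}$ reads $p\prec\mathfrak{B}=zf_0'/f_0$, which is precisely the second stated subordination. From the integral representation \eqref{4},
\begin{equation*}
\log\frac{f(z)}{z}=\int_0^z\frac{p(t)-1}{t}\,dt,\qquad \log\frac{f_0(z)}{z}=\int_0^z\frac{\mathfrak{B}(t)-1}{t}\,dt.
\end{equation*}
Building the Loewner chain $L(z,t)=f_0(e^{-t}z)\cdot e^{t}\cdot\exp\!\int_{e^{-t}z}^{z}(p(u)-\mathfrak{B}(u))/u\,du$ (the standard Ma--Minda subordination-chain argument) yields $f(z)/z\prec f_0(z)/z$.

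\textbf{Step 2 (parts (ii), (iii)).} From $f(z)/z\prec f_0(z)/z$ one obtains
\begin{equation*}
\min_{|\zeta|=r}\left|\frac{f_0(\zeta)}{\zeta}\right|\,\leq\,\left|\frac{f(z)}{z}\right|\,\leq\,\max_{|\zeta|=r}\left|\frac{f_0(\zeta)}{\zeta}\right|,
\end{equation*}
and similarly for $|\arg(f(z)/z)|$. The task is then to show that the circular minimum of $|f_0|$ is attained at $\zeta=-r$ and the maximum at $\zeta=r$. Remark~\ref{minrem} supplies the circular minimum of $\RE\mathfrak{B}$ and $|\mathfrak{B}|$ at $\zeta=-r$; propagating this through the integral representation (and using that $f_0$ is real and increasing on $(-1,1)$ by the positivity of $\RE\mathfrak{B}$) gives $\min_{|\zeta|=r}|f_0(\zeta)|=-f_0(-r)$ and $\max_{|\zeta|=r}|f_0(\zeta)|=f_0(r)$, yielding (ii). Part (iii) follows analogously after taking arguments.

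\textbf{Step 3 (part (iv)).} The covering statement is a standard consequence of the lower growth bound: if $w_0\notin f(\mathbb{D})$, then $g(z)=w_0 f(z)/(w_0-f(z))$ lies in $\mathcal{S}^*_{\mathfrak{B}}$ as well (or at least satisfies the same growth estimate), whence $|w_0|\geq\lim_{r\to 1^-}(-f_0(-r))=-f_0(-1)$. The rigidity assertion in (ii)--(iv) is the usual extremal principle: equality in a subordination bound at an interior point forces equality everywhere, i.e.\ $f$ is a rotation of $f_0$.

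The main potential obstacle is Step~2, where one must justify that the circular extrema of the analytic function $f_0$ actually occur at $\pm r$ and not merely that the extrema of $\mathfrak{B}$ do. This is where Remark~\ref{minrem} together with the monotonicity of $f_0$ on the real segment $(-1,1)$ is decisive; all remaining work is a direct citation of \cite{maminda}.
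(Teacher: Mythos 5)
Your top-level strategy---check that $\mathfrak{B}$ satisfies the Ma--Minda hypotheses and then quote the general theorems of \cite{maminda}---is exactly what the paper does; indeed the paper offers no proof beyond the sentence ``using results in \cite{maminda}'', so parts (i), (iv) and the rigidity claim are acceptable as citations. The genuine problem is in your Step 2, where the justification you supply for the upper bounds does not work. You claim that $\max_{|\zeta|=r}|f_0(\zeta)|=f_0(r)$ follows by ``propagating'' Remark \ref{minrem} through the integral representation together with monotonicity of $f_0$ on $(-1,1)$. But Remark \ref{minrem} speaks only of minima, and the maximum statement you would need for $\mathfrak{B}$ itself---that $\RE\mathfrak{B}$ (or $|\mathfrak{B}|$) on $|z|=r$ is maximized at $z=r$---is \emph{false} for $r$ near $1$: Lemma \ref{bds}(i),(iii) locates the maxima of $\RE\mathfrak{B}$ and $|\mathfrak{B}|$ on $|z|=1$ at $\theta_0\simeq1.15$ and $\theta_2\simeq1.31$ (values $1.388$ and $1.439$, versus $\mathfrak{B}(1)\simeq1.327$), and the paper emphasizes before its distortion theorem that $\max_{|z|=r}|\mathfrak{B}(z)|\neq\mathfrak{B}(r)$ for $r>0.639$. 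So the pointwise estimate $\RE\mathfrak{B}(t\zeta)\leq\mathfrak{B}(tr)$ on which your upper growth bound implicitly rests fails, and the same objection applies to your treatment of the rotation theorem. (Your lower bound via Remark \ref{minrem} is fine.)

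The conclusion $\max_{|\zeta|=r}|f_0(\zeta)|=f_0(r)$ is nonetheless true, but for a different reason, which is the actual content of Ma--Minda's argument: from $\log(f_0(z)/z)=\int_0^r s^{-1}(\mathfrak{B}(se^{i\theta})-1)\,ds$ one gets the identity $\tfrac{\partial}{\partial\theta}\log|f_0(re^{i\theta})/(re^{i\theta})|=-\IM\mathfrak{B}(re^{i\theta})$, and univalence, symmetry about $\mathbb{R}$ and $\mathfrak{B}'(0)>0$ force $\IM\mathfrak{B}(re^{i\theta})>0$ for $\theta\in(0,\pi)$; hence $|f_0(re^{i\theta})|$ decreases from $\theta=0$ to $\theta=\pi$. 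Only the crossings of the image curve with the real axis matter, not the location of the maximum of $\RE\mathfrak{B}$. Two smaller points: the ``Loewner chain'' you write down in Step 1 is not needed (and not obviously a subordination chain); and in Step 3 the function $w_0f/(w_0-f)$ need not belong to $\mathcal{S}^*_{\mathfrak{B}}$, so you cannot apply the growth estimate to it---the covering theorem follows instead from the lower growth bound together with the starlikeness of $f(\mathbb{D})$, since the distance from $0$ to $\partial f(\mathbb{D})$ is at least $\lim_{r\to1^-}(-f_0(-r))=-f_0(-1)$.
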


 In their work \cite{maminda}, Ma-Minda established the distortion theorem for the class $\mathcal{S}^*(\psi)$, subject to the condition:
\begin{equation}\label{discon}
\psi(-r) = \min_{|z|=r}|\psi(z)| \leq |\psi(z)| \leq \max_{|z|=r}|\psi(z)| = \psi(r).
\end{equation}
However, it is worth noting that the function $\mathfrak{B}$ does not satisfy condition (\ref{discon}) since $\max_{|z|=r}|\mathfrak{B}(z)| \neq \mathfrak{B}(r)$.
Recently, Gangania and Kumar \cite{gangania} modified the distortion theorem by relaxing the condition (\ref{discon}), leading to a more general result, as follows:
  \begin{theorem}\cite{gangania}
Let $\psi$ be a Ma-Minda function. Suppose $\min_{|z|=r}|\psi(z)| = |\psi(z_1)|$ and $\max_{|z|=r}|\psi(z)| = |\psi(z_2)|$, where $z_1 = re^{i\theta_1}$ and $z_2 = re^{i\theta_2}$ for some $\theta_1, \theta_2 \in [0,\pi]$. For $f\in\mathcal{S}^*(\psi)$ and $|z_0|=r<1$, the following inequality holds:
  \begin{equation*}
    |\psi(z_1)| \left(\dfrac{-f_0(-r)}{r}\right) \leq |f'(z_0)| \leq \left(\dfrac{f_0(r)}{r}\right) |\psi(z_2)|.
\end{equation*}
  \end{theorem}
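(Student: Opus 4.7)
The plan is to run the classical Ma--Minda distortion argument while replacing the monotonicity hypothesis \eqref{discon} by a direct application of the maximum and minimum modulus principles on $\overline{\mathbb{D}_r}$. Since $f\in\mathcal{S}^{*}(\psi)$, the subordination $zf'(z)/f(z)\prec\psi(z)$ provides a Schwarz function $w:\mathbb{D}\to\mathbb{D}$ with $w(0)=0$ satisfying $zf'(z)/f(z)=\psi(w(z))$. For $|z_{0}|=r$ this yields the basic identity
\[
|f'(z_{0})|=\frac{|f(z_{0})|}{r}\,|\psi(w(z_{0}))|,
\]
so the task reduces to bounding the two factors on the right.

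First I would invoke the Ma--Minda growth theorem (which holds for every Ma--Minda class without requiring \eqref{discon}, and is quoted as part (ii) of the preceding theorem with the obvious replacement of $f_{0}$ by the extremal for $\mathcal{S}^{*}(\psi)$) to obtain $-f_{0}(-r)\le|f(z_{0})|\le f_{0}(r)$. For the second factor, Schwarz's lemma gives $|w(z_{0})|\le r$, so $\psi(w(z_{0}))$ lies in the image $\psi(\overline{\mathbb{D}_r})$. Applying the maximum modulus principle to $\psi$ on $\overline{\mathbb{D}_r}$ yields $|\psi(w(z_{0}))|\le\max_{|\zeta|=r}|\psi(\zeta)|=|\psi(z_{2})|$. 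Because any Ma--Minda function satisfies $\RE\psi>0$ on $\mathbb{D}$, $\psi$ is non-vanishing there, so $1/\psi$ is analytic on $\overline{\mathbb{D}_r}$; applying the maximum modulus principle to $1/\psi$ then gives $|\psi(w(z_{0}))|\ge\min_{|\zeta|=r}|\psi(\zeta)|=|\psi(z_{1})|$. The restriction $\theta_{1},\theta_{2}\in[0,\pi]$ in the statement is automatic, since the symmetry of $\psi(\mathbb{D})$ about the real axis forces the extrema of $|\psi|$ on the circle $|\zeta|=r$ to occur at pairs of complex conjugate points.

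Combining these pointwise estimates in the identity above produces
\[
|\psi(z_{1})|\,\frac{-f_{0}(-r)}{r}\le|f'(z_{0})|\le\frac{f_{0}(r)}{r}\,|\psi(z_{2})|,
\]
as required. I do not expect any genuine obstacle here: the entire content of the modification is the observation that \eqref{discon} is used in the classical proof solely to \emph{locate} where $|\psi|$ attains its extrema on $|\zeta|=r$, and the maximum/minimum modulus principles do that job in full generality as soon as $\psi$ is non-vanishing, which is guaranteed by $\RE\psi>0$. The only place requiring care is the invocation of the minimum modulus principle, where one must explicitly cite $\RE\psi>0$ to justify that $1/\psi$ is analytic on the closed sub-disk.
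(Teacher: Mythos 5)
This statement is quoted verbatim from \cite{gangania} and the paper supplies no proof of its own, so there is no in-paper argument to compare against; judged on its merits, your proof is correct and is essentially the intended one. The factorization $|f'(z_0)|=\bigl(|f(z_0)|/r\bigr)\,|\psi(w(z_0))|$, the growth theorem for the first factor, and the maximum/minimum modulus principles for the second (the minimum-modulus step legitimized by $\RE\psi>0$, so that $1/\psi$ is analytic on $\overline{\mathbb{D}_r}$) is exactly the right mechanism, and your observation that \eqref{discon} enters the classical argument only to \emph{locate} the extrema of $|\psi|$ on $|\zeta|=r$ is precisely the point of the generalization. The single ingredient you import wholesale --- the growth bound $-f_0(-r)\le|f(z_0)|\le f_0(r)$ for a general Ma--Minda $\psi$ --- is itself not free of monotonicity hypotheses (its standard proof uses $\min_{|z|=\rho}\RE\psi(z)=\psi(-\rho)$ and $\max_{|z|=\rho}\RE\psi(z)=\psi(\rho)$, a real-part analogue of \eqref{discon}), so strictly speaking it deserves the same scrutiny; but since the paper asserts it as part (ii) of the preceding theorem, citing it as you do is acceptable here.
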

  To establish the distortion theorem for functions in $\mathcal{S}^*_{\mathfrak{B}}$, we consider the expression for $|\mathfrak{B}(re^{i\theta})|^2$, denoted as $g_r(\theta)$:  
  \begin{equation*}
      |\mathfrak{B}(re^{i\theta})|^2=\dfrac{2e^{2r\cos\theta}}{\sqrt{1+e^{4r\cos\theta}+2e^{2r\cos\theta}\cos(2r\sin\theta)}}=:g_r(\theta).
  \end{equation*}
  Next, we calculate the derivative of $g_r(\theta)$,
  $$g_r'(\theta)=-\dfrac{4e^{2r\cos\theta}r(\sin\theta+e^{2r\cos\theta}\cos(2r\sin\theta)\sin\theta-e^{2r\cos\theta}\sin(2r\sin\theta)\cos\theta)}{(1+e^{4r\cos\theta}+2e^{2r\cos\theta}\cos(2r\sin\theta))^{3/2}}.$$ 
 By simple observation, we can see that, the critical points of $g_r(\theta)$ are the zeroes of
 $$h_r(\theta):=\sin\theta+e^{2r\cos\theta}\cos(2r\sin\theta)\sin\theta-e^{2r\cos\theta}\sin(2r\sin\theta)\cos\theta.$$ 
Clearly, $\theta=0$ and $\pi$ are zeroes of $h_r(\theta)$ for all $r$. Remark \ref{minrem} confirms that $|\mathfrak{B}(z)|$ attains its minimum at $\theta=\pi$ for $|z|=r$. However, the maximum is not necessarily attained at $\theta=0$, as $h_r(\theta)$ possesses roots other than $0$ and $\pi$ for $r> r_0\simeq0.639$.
Consequently, we can now state the distortion theorem for functions in $\mathcal{S}^*_{\mathfrak{B}}$ as follows:
\begin{theorem}(\textit{Distortion Theorem})
Let $\max_{|z|=r}|\mathfrak{B}(z)|=|\mathfrak{B}(z_0)|$, where $z_0=re^{i \theta_0}$ for some $\theta_0\in[0,\pi]$. For $f\in\mathcal{S}^*_{\mathfrak{B}}$ and $f_0$ as defined in (\ref{5}), the following inequalities hold for $|z|=r<1$:
\begin{align*}
   f_0'(-r) & \leq |f'(z)| \leq \frac{|\mathfrak{B}(r e^{i \theta_0})| f_0(r)}{r} \quad \text{for} \quad r>r_0\simeq0.639, \\
f_0'(-r) & \leq |f'(z)| \leq f_0'(r) \quad \text{for} \quad r\leq r_0.
\end{align*}
\end{theorem}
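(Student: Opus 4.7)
The plan is to mimic the Gangania--Kumar distortion argument recalled above, exploiting the decomposition
$$f'(z)=\frac{f(z)}{z}\cdot\frac{zf'(z)}{f(z)}.$$
The first factor is handled by part (ii) of the preceding theorem, giving $-f_0(-r)/r\le|f(z)/z|\le f_0(r)/r$. For the second factor, the subordination $zf'(z)/f(z)\prec\mathfrak{B}(z)$ furnishes a Schwarz function $\omega$ with $zf'(z)/f(z)=\mathfrak{B}(\omega(z))$ and $|\omega(z)|\le r$; since $\mathfrak{B}$ has positive real part on $\mathbb{D}$ by Lemma~\ref{bds}(i), it is zero-free on $\overline{\mathbb{D}_r}$, so both the maximum and the minimum modulus principles apply and yield
$$\min_{|w|=r}|\mathfrak{B}(w)|\ \le\ \Bigl|\frac{zf'(z)}{f(z)}\Bigr|\ \le\ \max_{|w|=r}|\mathfrak{B}(w)|.$$
Multiplying these pairs of bounds reduces the theorem to identifying the extrema of $|\mathfrak{B}|$ on $|w|=r$ and simplifying via the extremal identity $zf_0'(z)/f_0(z)=\mathfrak{B}(z)$ obtained by logarithmic differentiation of \eqref{4}.

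For the lower bound I would invoke Remark~\ref{minrem} together with the observation preceding the theorem that $\theta=\pi$ is a zero of $h_r$ and a minimizer of $g_r$ for every $r\in(0,1)$. Hence $\min_{|w|=r}|\mathfrak{B}(w)|=\mathfrak{B}(-r)$, and evaluating the extremal identity at $z=-r$ gives $\mathfrak{B}(-r)=-r\,f_0'(-r)/f_0(-r)$. Substitution yields
$$\frac{-f_0(-r)}{r}\cdot\mathfrak{B}(-r)\ =\ \frac{-f_0(-r)}{r}\cdot\frac{-r\,f_0'(-r)}{f_0(-r)}\ =\ f_0'(-r),$$
which is exactly the asserted lower bound in both regimes.

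For the upper bound the case split at $r_0\simeq0.639$ is dictated by the analysis of $h_r(\theta)=0$ carried out immediately before the theorem. If $r\le r_0$, the only critical points of $g_r$ in $[0,\pi]$ are $0$ and $\pi$; since $\pi$ is the minimizer, $\theta=0$ must be the maximizer, so $\max_{|w|=r}|\mathfrak{B}(w)|=\mathfrak{B}(r)=r\,f_0'(r)/f_0(r)$, and multiplication with $f_0(r)/r$ collapses the upper bound to $f_0'(r)$. If $r>r_0$, an additional interior zero $\theta_0\in(0,\pi)$ of $h_r$ supplies the global maximum, so $\max_{|w|=r}|\mathfrak{B}(w)|=|\mathfrak{B}(re^{i\theta_0})|$ and the upper bound takes the first form stated. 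The only genuinely delicate ingredient in the entire proof is locating the maximizer of $|\mathfrak{B}|$ on $|w|=r$; that work is already done in the paragraph preceding the statement, so what remains is the Ma--Minda multiplication together with the $f_0$-identity.
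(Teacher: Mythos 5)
Your proposal is correct and follows essentially the same route as the paper, which obtains the theorem by combining the Gangania--Kumar distortion result with the preceding analysis of $g_r(\theta)=|\mathfrak{B}(re^{i\theta})|^2$ and its critical points (minimum at $\theta=\pi$ by Remark~\ref{minrem}, maximum at $\theta=0$ for $r\le r_0$ and at an interior root $\theta_0$ of $h_r$ for $r>r_0$), together with the identity $zf_0'(z)/f_0(z)=\mathfrak{B}(z)$. The only difference is that you re-derive the Gangania--Kumar inequality from the factorization $f'=(f/z)\cdot(zf'/f)$ rather than citing it, which is a harmless (and self-contained) expansion of the same argument.
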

 
 The following lemma aims to find the maximal disk centered at the
sliding point $(a, 0)$ on the real line, that can fit within $\mathfrak{B}(\mathbb{D}).$
  
\begin{lemma}\label{circle}
	Let $\sqrt{2/(1+e^2)}<\alpha<e\sqrt{2/(1+e^2)}$.
	Then we have the inclusion:
\begin{equation}\label{bdisk}
    \{w:|w-\alpha|<r_{\alpha}\}\subset \mathfrak{B}(\mathbb{D}),
\end{equation}
where $r_{\alpha}$ is defined as follows:
\begin{equation}\label{ralpha}
    r_{\alpha}=\begin{cases}
        \alpha-\sqrt{\frac{2}{(1+e^2)}},&\sqrt{\frac{2}{(1+e^2)}}<\alpha\leq\frac{1+e}{\sqrt{2(1+e^2)}}\\
        e\sqrt{\frac{2}{(1+e^2)}}-\alpha,& \frac{1+e}{\sqrt{2(1+e^2)}}\leq\alpha<e\sqrt{\frac{2}{(1+e^2)}}.
    \end{cases}
\end{equation}

\end{lemma}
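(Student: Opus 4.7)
Write $c := \sqrt{2/(1+e^2)}$ so that $\mathfrak{B}(-1) = c$ and $\mathfrak{B}(1) = ec$ are the two real points of $\partial\mathfrak{B}(\mathbb{D})$, and set $\alpha^* := (1+e)c/2$, $r^* := (e-1)c/2$ for the centre and radius of the ``midpoint'' disk $D^* := \{w : |w - \alpha^*| \leq r^*\}$, whose real diameter is the segment $[c, ec]$. The plan is to reduce both subcases of \eqref{ralpha} to the single containment $D^* \subseteq \overline{\mathfrak{B}(\mathbb{D})}$, and then verify this by applying the maximum modulus principle to the analytic function $L(w) := \log\bigl(w^2/(2-w^2)\bigr)$, whose sublevel set $\{|L| < 2\}$ equals $\mathfrak{B}(\mathbb{D})$ and which satisfies $L(c) = -2$ and $L(ec) = 2$.

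The reduction is immediate from the triangle inequality: for $c < \alpha \leq \alpha^*$ and $|w - \alpha| < \alpha - c$, one has
\[
|w - \alpha^*| \leq |w - \alpha| + (\alpha^* - \alpha) < (\alpha - c) + (\alpha^* - \alpha) = r^*,
\]
so the disk in the first subcase of \eqref{ralpha} is strictly contained in the open disk $D^{*\circ}$; the second subcase is symmetric. Since $\overline{D^*}$ avoids $\{0, \pm\sqrt 2\}$, $L$ extends analytically to a neighbourhood of $\overline{D^*}$, and it is non-constant because $L(c) \neq L(ec)$. By the maximum modulus principle, the bound $|L| \leq 2$ on $\partial D^*$ then forces $|L| < 2$ throughout $D^{*\circ}$, which gives $D^{*\circ} \subseteq \mathfrak{B}(\mathbb{D})$ and hence the lemma.

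It remains to verify $|L(w)| \leq 2$ on $\partial D^*$. Parameterising by $w(\phi) = (c/2)X$ with $X := (1+e) + (e-1)e^{i\phi}$ and using the identity $c^2(1+e^2) = 2$, a direct computation yields
\[
\frac{w(\phi)^2}{2 - w(\phi)^2} = \frac{X^2}{4(1+e^2) - X^2},
\]
with endpoints $X(0) = 2e$ and $X(\pi) = 2$ reproducing $L = \pm 2$. The inequality $|L(w(\phi))|^2 \leq 4$ then becomes $(\log\rho(\phi))^2 + \psi(\phi)^2 \leq 4$, where $\rho$ and $\psi$ are the modulus and argument of $X^2/(4(1+e^2) - X^2)$; both are expressible in closed form using $|X|^2 = 2(1+e^2) + 2(e^2 - 1)\cos\phi$. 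The main obstacle is this boundary verification: although the factorisation $4(1+e^2) - X^2 = (2\sqrt{1+e^2} - X)(2\sqrt{1+e^2} + X)$ is clean and equality holds at $\phi = 0, \pi$, the resulting inequality in $\phi$ mixes logarithmic, trigonometric, and rational terms. I would attack it by differentiating $|L(w(\phi))|^2$ in $\phi$ and showing that the only zeros of the derivative on $[0, 2\pi)$ are $\phi = 0$ and $\phi = \pi$, so that the maximum value $4$ is attained only at these two angles.
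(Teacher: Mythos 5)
Your reduction is correct and genuinely different from the paper's argument: the triangle-inequality step shows that every disk $\{w:|w-\alpha|<r_\alpha\}$ in the statement sits inside the single ``midpoint'' disk $D^{*\circ}$ with real diameter $[c,ec]$, so the whole lemma collapses to the one containment $D^{*\circ}\subset\mathfrak{B}(\mathbb{D})$, and recasting that containment as the boundary estimate $|L|\le 2$ on $\partial D^*$ via the maximum modulus principle is a clean idea. (The paper instead fixes $\alpha$, minimizes the squared distance $d(\theta)$ from $(\alpha,0)$ to the boundary curve $\mathfrak{B}(e^{i\theta})$, and shows the minimum is attained at $\theta=0$ or $\theta=\pi$; your route replaces that one-parameter family of minimizations by a single inequality.) Two small repairs: avoiding $\{0,\pm\sqrt2\}$ is not by itself enough to make $L$ analytic on $\overline{D^*}$ --- you need a single-valued branch of the logarithm, which exists because $\overline{D^*}$ is simply connected, lies in $\{\RE w>0\}\cap\{|w|<\sqrt 2\}$, and $w^2/(2-w^2)$ omits $(-\infty,0]$ there --- and after applying maximum modulus you should note why $D^{*\circ}$ lands in the component of $\{|L|<2\}$ that equals $\mathfrak{B}(\mathbb{D})$ (it is connected and contains $1=\mathfrak{B}(0)$).

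The genuine gap is that the step carrying all the analytic content --- the inequality $|L(w(\phi))|\le 2$ for every $\phi$, i.e. $(\log\rho(\phi))^2+\psi(\phi)^2\le 4$ on the circle --- is never proved. You set up the parametrization, verify the two endpoint equalities at $\phi=0,\pi$, and then only describe how you ``would attack'' the inequality. Since the lemma is exactly equivalent to this estimate after your reduction, the argument as written establishes nothing. The proposed attack (show that the derivative of $|L(w(\phi))|^2$ vanishes only at $\phi=0$ and $\phi=\pi$) is plausible and numerically consistent, but that derivative mixes $\log\rho$, $\rho'/\rho$, $\psi$ and $\psi'$ and is not obviously sign-definite, so this is precisely the computation that must be carried out; it is the counterpart of the monotonicity of $d(\theta)$ on $[0,\theta_\alpha]$ and $[\theta_\alpha,\pi]$ that the paper's proof relies on. Until that boundary inequality is established, the proof is incomplete.
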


\begin{proof}
Since (\ref{beanbary}) is the boundary curve of the domain $\mathfrak{B}(\mathbb{D}_r)$, which is symmetric about the real axis, we can focus on $0\leq\theta\leq\pi$. We begin by evaluating the square of the distance from $(\alpha,0)$ to the points on the curve $\mathfrak{B}(e^{i\theta})$, denoted as $d(\theta)$:
\begin{equation}\label{dis}
d(\theta) = \frac{e^{2\cos{\theta}}N^2_\theta}{(M^2_\theta+N^2_\theta)(M_\theta+\sqrt{M^2_\theta+N^2_\theta})} + \left(\alpha-\frac{e^{\cos{\theta}}\sqrt{M_\theta+\sqrt{M^2_\theta+N^2_\theta}}}{\sqrt{M^2_\theta+N^2_\theta}}\right)^2.
\end{equation}	We then find the critical points by solving $d'(\theta)=0$, which yields $\theta=0$, $\theta_\alpha$, and $\pi$. Furthermore, we observe that $d(\theta)$ is increasing in $[0,\theta_{\alpha}]$ and decreasing in $[\theta_{\alpha},\pi]$, where $\theta_\alpha$ depends on the parameter $\alpha$.\\
\textbf{Case (i):} If $\sqrt{2/(1+e^2)}<\alpha<(1+e)/\sqrt{2(1+e^2)}$, we have $d(0)-d(\pi) > 0$, indicating that the minimum of $d(\theta)$ is attained at $\pi$. Thus, in this case:  
$$r_{\alpha}=\sqrt{d(\pi)}=\alpha-\sqrt{\frac{2}{1+e^2}}.$$	
\textbf{Case (ii):} If $(1+e)/\sqrt{2(1+e^2)}<\alpha<e\sqrt{2/(1+e^2)}$, we find $d(0)-d(\pi) < 0$, implying that the minimum of $d(\theta)$ is attained at $\theta=0$. Consequently, we obtain:	$$r_{\alpha}=\sqrt{d(0)}=e\sqrt{\frac{2}{(1+e^2)}}-\alpha.$$
That completes the proof.
\end{proof}

\begin{corollary}If $\sqrt{2/(1+e^2)}<\alpha<e\sqrt{2/(1+e^2)}$, then
    $$ \mathfrak{B}(\mathbb{D})\subset\{w\in\mathbb{C}:|w-\alpha|\leq\sqrt{d(\theta_{\alpha})}\},$$where $d(\theta)$ is given by (\ref{dis}) and $\theta_{\alpha}$ is the zero of $d'(\theta)$ other than $0$ and $\pi$.
\end{corollary}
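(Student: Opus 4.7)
The plan is to observe that this corollary is the dual of Lemma \ref{circle}: rather than inscribing the largest disk centered at $\alpha$ inside $\mathfrak{B}(\mathbb{D})$, we want to circumscribe the smallest such disk, and the entire computation needed has already been carried out in the proof of Lemma \ref{circle}.

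First, I would reduce the problem to a boundary maximization. Since $\mathfrak{B}$ is univalent on $\overline{\mathbb{D}}$ (established in the opening paragraph of Section 2) and $\mathfrak{B}(z)-\alpha$ is holomorphic, the maximum modulus principle applied to $\mathfrak{B}(z)-\alpha$ guarantees that
\[
\sup_{w\in\mathfrak{B}(\mathbb{D})}|w-\alpha|\;=\;\max_{|z|=1}|\mathfrak{B}(z)-\alpha|.
\]
Combined with the symmetry of $\mathfrak{B}(\mathbb{D})$ about the real axis, it suffices to maximize $d(\theta)=|\mathfrak{B}(e^{i\theta})-\alpha|^{2}$ over $\theta\in[0,\pi]$.

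Second, I would invoke the critical-point analysis already carried out in the proof of Lemma \ref{circle}. There it was shown that the only zeros of $d'(\theta)$ on $[0,\pi]$ are $\theta=0$, $\theta=\theta_{\alpha}$, and $\theta=\pi$, and moreover $d(\theta)$ is increasing on $[0,\theta_{\alpha}]$ and decreasing on $[\theta_{\alpha},\pi]$. This monotonicity immediately identifies $\theta_{\alpha}$ as the unique global maximizer on $[0,\pi]$, and the endpoints as the only candidates for the minimum (which were the focus of Lemma \ref{circle}). Hence
\[
\max_{\theta\in[0,\pi]}d(\theta)\;=\;d(\theta_{\alpha}).
\]

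Finally, combining these two observations yields $|w-\alpha|\le\sqrt{d(\theta_{\alpha})}$ for every $w\in\mathfrak{B}(\mathbb{D})$, which is the desired inclusion. There is no real obstacle in this proof, since the heavy computational work (deriving $d(\theta)$ and locating its critical points in the two regimes for $\alpha$) was already completed in Lemma \ref{circle}; the only subtlety is simply reinterpreting the same critical-point data to read off the maximum rather than the minimum.
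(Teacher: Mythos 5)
Your proposal is correct and follows exactly the route the paper intends: the corollary is stated without proof precisely because the critical-point analysis of $d(\theta)$ in the proof of Lemma \ref{circle} (monotone increasing on $[0,\theta_{\alpha}]$, decreasing on $[\theta_{\alpha},\pi]$) already identifies $\theta_{\alpha}$ as the global maximizer of the squared distance to the boundary. Your added justification via the maximum modulus principle and the symmetry reduction to $[0,\pi]$ is a sound way to make the reduction to the boundary explicit.
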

\begin{remark}\label{rem}
$(i)$ Let $\alpha_0:=(1+e)/\sqrt{2(1+e^2)}$. The largest disk inside $\mathfrak{B}(\mathbb{D})$ is $\{w\in\mathbb{C}:|w-\alpha_0|<r_{\alpha_0}\}$.\\
$(ii)$ The smallest disk containing $\mathfrak{B}(\mathbb{D})$ is $\{w\in\mathbb{C}:|w-1.006|<\sqrt{d(\theta_{1})}=0.69949\}$, where $\theta_1\simeq1.72466$, as given in Lemma \ref{bds}.\\
$(iii)$ The smallest disk containing $\mathfrak{B}(\mathbb{D})$ taking $1$ as center is $\{w\in\mathbb{C}:|w-1|<0.699517\}.$
\end{remark}
In view of Lemma \ref{circle}, we establish the following result:

\begin{theorem}\label{thmjan}
	Let  $p(z):=(1+Az)/(1+Bz)$, where $-1<B<A\leq1$, then   $p(z)\prec\mathfrak{B}(z)$ if and only if
	\begin{equation}\label{jano}
	A\leq\begin{cases}1-\sqrt{\frac{2}{1+e^2}}(1-B)&\quad\text{if}\quad \frac{1-AB}{1-B^2}\leq\frac{1+e}{\sqrt{2(1+e^2)}}\\
	e\sqrt{\frac{2}{1+e^2}}(1+B)-1&\quad \text{if}\quad\frac{1-AB}{1-B^2}\geq\frac{1+e}{\sqrt{2(1+e^2)}}.
	\end{cases}
		\end{equation}
\end{theorem}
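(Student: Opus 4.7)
The plan is to use the fact that $p(z) = (1+Az)/(1+Bz)$ is (up to normalization) a Möbius transformation sending $\mathbb{D}$ onto an open disk symmetric about the real axis. A straightforward computation gives
\[
p(\mathbb{D}) \;=\; \Bigl\{w : \bigl|w - c\bigr| < \rho\Bigr\},\qquad c = \frac{1-AB}{1-B^2},\quad \rho = \frac{A-B}{1-B^2},
\]
with $c > 0$ and $\rho > 0$ under the hypothesis $-1 < B < A \le 1$. Since $p(0) = \mathfrak{B}(0) = 1$ and $\mathfrak{B}$ is univalent on $\mathbb{D}$, the subordination $p \prec \mathfrak{B}$ is equivalent to the set containment $p(\mathbb{D}) \subseteq \mathfrak{B}(\mathbb{D})$.

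Next I would apply Lemma \ref{circle}, which gives exactly the radius $r_\alpha$ of the largest disk centered at a real point $\alpha$ inside $\mathfrak{B}(\mathbb{D})$. Because $p(\mathbb{D})$ is a disk centered on the real axis at $c$, the containment $p(\mathbb{D}) \subseteq \mathfrak{B}(\mathbb{D})$ reduces to the scalar inequality $\rho \le r_c$, with $c$ playing the role of $\alpha$ from that lemma. A preliminary check is that $c$ falls in the admissible range $\sqrt{2/(1+e^2)} < c < e\sqrt{2/(1+e^2)}$ required by Lemma \ref{circle}; this follows from $1 \in p(\mathbb{D})$ together with $|c - 1| = |B|\rho < \rho$ and the bracketing $\sqrt{2/(1+e^2)} < 1 < e\sqrt{2/(1+e^2)}$.

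I would then split into the two cases dictated by the piecewise definition of $r_\alpha$. In the case $c \le (1+e)/\sqrt{2(1+e^2)}$, substituting $r_c = c - \sqrt{2/(1+e^2)}$ into $\rho \le r_c$ gives
\[
\frac{A-B}{1-B^2} \;\le\; \frac{1-AB}{1-B^2} - \sqrt{\tfrac{2}{1+e^2}},
\]
and factoring out $(1+B)$ on both sides (legitimate since $1+B > 0$) yields $A \le 1 - \sqrt{2/(1+e^2)}\,(1-B)$. In the second case $c \ge (1+e)/\sqrt{2(1+e^2)}$, substituting $r_c = e\sqrt{2/(1+e^2)} - c$ and factoring out $(1-B) > 0$ produces $A \le e\sqrt{2/(1+e^2)}\,(1+B) - 1$. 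These are exactly the two branches of \eqref{jano}, and the equivalence in the theorem follows. The converse direction uses the same calculation read backwards.

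There is no real obstacle here beyond bookkeeping: the argument is essentially a direct substitution into Lemma \ref{circle} once one translates $p(\mathbb{D})$ into disk form. The most delicate step is merely the algebraic simplification, where it is convenient to clear the common denominator $1-B^2$ before factoring, and to verify that the threshold $c \lessgtr (1+e)/\sqrt{2(1+e^2)}$ matches the threshold appearing in \eqref{jano}, so that the two case distinctions in the theorem statement agree with those of Lemma \ref{circle}.
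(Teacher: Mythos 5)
Your proposal is correct and follows essentially the same route as the paper: write $p(\mathbb{D})$ as the disk $D(c,\rho)$ with $c=(1-AB)/(1-B^2)$, $\rho=(A-B)/(1-B^2)$, reduce the subordination to the containment $D(c,\rho)\subseteq\mathfrak{B}(\mathbb{D})$, and apply Lemma \ref{circle} with $\alpha=c$, splitting on whether $c$ lies below or above $(1+e)/\sqrt{2(1+e^2)}$. If anything, your explicit case-by-case substitution of the piecewise $r_\alpha$ and the factoring of $(1\pm B)$ is a cleaner execution of the step the paper compresses into ``considering the conditions given in (\ref{ralpha})''.
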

\begin{proof}
For $-1<B<A\leq1$, the function $p(z)=(1+Az)/(1+Bz)$ maps the unit disc $\mathbb{D}$ onto the disc $D(c,r)$, where
\begin{equation*}
c=\frac{1-AB}{1-B^2},\quad r=\frac{A-B}{1-B^2}.
\end{equation*}
Since $1\in D(c,r)$, we have $p(z)\prec \mathfrak{B}(z)$ if and only if $D(c,r)\subset\mathfrak{B}(\mathbb{D})$.
Using Lemma \ref{circle}, we find that $p(z)\prec \mathfrak{B}(z)$ if and only if $D(c,r)\subset D((1+e)/\sqrt{2(1+e^2)},\sqrt{2/(1+e^2)}(e-1)/2)$. This is equivalent to the inequality:
\begin{equation}
\bigg|c-\frac{1+e}{\sqrt{2(1+e^2)}}\bigg|\leq\sqrt{\frac{2}{1+e^2}}\bigg(\frac{e-1}{2}\bigg)-r.
\end{equation}
Now, considering the conditions given in (\ref{ralpha}), we establish the result (\ref{jano}).
\end{proof}

 From Theorem \ref{thmjan}, we deduce the following corollary, which ensures that $\mathcal{S}^*_{\mathfrak{B}}$ is nonempty.
\begin{corollary}
	 If the conditions on $A,B$ given in Theorem \ref{thmjan} hold, then $\mathcal{S}^*[A,B]\subset\mathcal{S}^*_{\mathfrak{B}}.$
\end{corollary}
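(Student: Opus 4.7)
The plan is to invoke Theorem \ref{thmjan} together with the transitivity of subordination, which makes the corollary essentially a one-line deduction. The strategy is to start with an arbitrary $f \in \mathcal{S}^*[A,B]$, so that by the definition of the Janowski class we have $zf'(z)/f(z) \prec (1+Az)/(1+Bz)$. Since $A, B$ are assumed to satisfy the inequalities stipulated in Theorem \ref{thmjan}, that theorem gives us $(1+Az)/(1+Bz) \prec \mathfrak{B}(z)$.

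Chaining these two subordinations by transitivity then yields $zf'(z)/f(z) \prec \mathfrak{B}(z)$, which by definition places $f$ in $\mathcal{S}^*_{\mathfrak{B}}$. Since the choice of $f$ was arbitrary, the stated inclusion $\mathcal{S}^*[A,B] \subset \mathcal{S}^*_{\mathfrak{B}}$ follows.

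There is essentially no obstacle here: the hard analytic work has already been done in Lemma \ref{circle} (locating the largest disk inscribed in $\mathfrak{B}(\mathbb{D})$) and in Theorem \ref{thmjan} (translating that geometric fact into the subordination condition on $A, B$). The corollary is a purely formal consequence, so the only thing to be careful about is to cite Theorem \ref{thmjan} correctly and to note explicitly that transitivity of subordination is being used. Nonemptiness of $\mathcal{S}^*_{\mathfrak{B}}$ then follows because, for instance, the choice $B = 0$ and $A$ small enough produces concrete members of $\mathcal{S}^*[A,B]$, and hence of $\mathcal{S}^*_{\mathfrak{B}}$.
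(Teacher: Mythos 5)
Your proof is correct and is exactly the deduction the paper intends: the paper states this corollary without proof as an immediate consequence of Theorem \ref{thmjan}, relying on the same transitivity of subordination (Schwarz-function composition) that you spell out. Nothing is missing.
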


\begin{theorem}
    Let $f\in\mathcal{S}^*_{\mathfrak{B}}$, then $f\in\mathcal{S}^*_e$, which is $\mathcal{S}^*_e:=\{f\in\mathcal{A};zf'(z)/f(z)\prec e^z\}$.
\end{theorem}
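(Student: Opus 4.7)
The plan is to reduce the inclusion $\mathcal{S}^*_\mathfrak{B} \subset \mathcal{S}^*_e$ to the single subordination $\mathfrak{B}(z) \prec e^z$ on $\mathbb{D}$; once this is in hand, transitivity yields $zf'(z)/f(z) \prec \mathfrak{B}(z) \prec e^z$ for every $f \in \mathcal{S}^*_\mathfrak{B}$, so $f \in \mathcal{S}^*_e$. Since $\mathfrak{B}(0) = e^0 = 1$ and $e^z$ is univalent on $\mathbb{D}$, the subordination $\mathfrak{B} \prec e^z$ is equivalent to the set inclusion $\mathfrak{B}(\mathbb{D}) \subseteq \{w \in \mathbb{C} : |\log w| < 1\}$, where $\log$ denotes the principal branch.

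By Lemma \ref{bds}, $\RE \mathfrak{B}(z) > 0$ on $\overline{\mathbb{D}}$, so the function $L(z) := \log \mathfrak{B}(z) = \tfrac{1}{2}\log(1+\tanh z)$ is analytic on $\mathbb{D}$ (continuous on $\overline{\mathbb{D}}$) with $L(0) = 0$. By the maximum modulus principle, to establish $|L(z)| < 1$ throughout $\mathbb{D}$ it suffices to verify that
\begin{equation*}
\Phi(\theta) := \bigl(\log |\mathfrak{B}(e^{i\theta})|\bigr)^2 + \bigl(\arg \mathfrak{B}(e^{i\theta})\bigr)^2 < 1 \qquad \text{for all } \theta \in [0,2\pi).
\end{equation*}

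I would then handle this step using Lemma \ref{bds} directly. Part (iii) gives $\sqrt{2/(1+e^2)} \le |\mathfrak{B}(e^{i\theta})| \le \sqrt{A(\theta_2)}$ with $A(\theta_2) \simeq 2.0694$, so $\bigl|\log|\mathfrak{B}(e^{i\theta})|\bigr| \le \tfrac{1}{2}\log\bigl((1+e^2)/2\bigr) \simeq 0.717$; and part (iv) gives $|\arg \mathfrak{B}(e^{i\theta})| \le 0.43849\,\pi/2 \simeq 0.6886$. Summing the squared bounds yields $\Phi(\theta) \le 0.717^2 + 0.6886^2 \simeq 0.988 < 1$, as required. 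The main thing to watch here is that this naive sum-of-squares bound is numerically rather tight; however, the minimum of $|\mathfrak{B}(e^{i\theta})|$ is attained at $\theta = \pi$ by Remark \ref{minrem}, where $\arg \mathfrak{B} = 0$, so the two separate extremizers from Lemma \ref{bds} cannot coincide and there is in fact a comfortable gap to $1$. No argument beyond Lemma \ref{bds} and the maximum modulus principle should be needed.
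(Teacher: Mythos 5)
Your proof is correct, and it takes a genuinely different route from the paper. The paper argues by a disk sandwich: it quotes Remark \ref{rem}(iii), that $\mathfrak{B}(\mathbb{D})$ lies in the disk $\{w:|w-1|<0.699517\}$, and then cites a lemma of Mendiratta et al. to the effect that a certain disk centered at $1$ is contained in $\Lambda:=\{w:|\log w|<1\}=e^{\mathbb{D}}$. You instead verify the inclusion $\mathfrak{B}(\mathbb{D})\subseteq\Lambda$ directly on the boundary, writing $|\log\mathfrak{B}|^2=(\log|\mathfrak{B}|)^2+(\arg\mathfrak{B})^2$ and feeding in the sharp modulus and argument bounds of Lemma \ref{bds}(iii)--(iv). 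Your route is actually the more reliable of the two: the radius $1.71828=e-1$ quoted in the paper is not what the cited lemma gives for center $a=1$ (the largest disk centered at $1$ inside $\Lambda$ has radius $1-e^{-1}\simeq0.632$), and since $0.6995>0.632$ the paper's containment chain does not close as written; the theorem survives precisely because, as your computation exploits, the boundary points of $\mathfrak{B}(\mathbb{D})$ of extreme modulus (attained on the real axis, where the argument vanishes) and of extreme argument occur at different $\theta$. The price of your approach is the numerically tight margin $0.717^2+0.6886^2\simeq0.988<1$, which leans on the accuracy of the constants in Lemma \ref{bds}; your remark that the two extremizers cannot coincide is the right sanity check, and could be upgraded to a clean proof by maximizing $\Phi(\theta)$ itself. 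Two minor points to tidy: Lemma \ref{bds} is stated only for $|z|=1$, so positivity of $\operatorname{Re}\mathfrak{B}$ throughout $\mathbb{D}$ (needed to fix the branch of $\log\mathfrak{B}$) should be justified via the minimum principle for harmonic functions or via univalence of $\mathfrak{B}$; and $1+\tanh z$ should be observed to omit $(-\infty,0]$ on $\overline{\mathbb{D}}$ so that $L$ is single-valued there before the maximum modulus principle is invoked.
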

 
\begin{proof}
 Let $f\in\mathcal{S}^*_{\mathfrak{B}}$, then the smallest disk containing $\mathfrak{B}(\mathbb{D})$ taking $1$ as center is $\{w\in\mathbb{C}:|w-1|<0.699517\}$, given in Remark \ref{rem}. According to   \cite[Lemma 2.2]{exp}, we have that the disk $\{w\in\mathbb{C}:|w-1|<1.71828\}\subset \Lambda$, where  $\Lambda:=\{w\in\mathbb{C}:|\log w|<1\}$ is the image domain of $e^z$ on $\mathbb{D}.$ Therefore the result holds.
\end{proof}
Now, we explore the inclusion relations of $\mathcal{S}^*_{\mathfrak{B}}$ with respect to several other classes, including starlike of reciprocal order $\alpha$, Uralegaddi class of order $\beta$, $k-$starlike class, and the class of functions having a parabolic region. Let's define these classes:
\begin{gather*}
\mathcal{RS}^{*}(\alpha):=\left\{f\in\mathcal{S}:\RE \frac{f(z)}{zf'(z)}>\alpha,\quad 0\leq\alpha<1 \right\},\\
\mathcal{M}(\beta):=\left\{f\in\mathcal{S}:\RE \frac{zf'(z)}{f(z)}<\beta,\quad \beta>1\right\},\\
k-\mathcal{ST}:=\left\{f\in\mathcal{S}:\RE \frac{zf'(z)}{f(z)}>k\bigg|\frac{zf'(z)}{f(z)}-1\bigg|,\quad k\geq0\right\}
\intertext{and}
    \mathcal{SP}(\rho):=\left\{f\in\mathcal{S}:\RE\frac{zf'(z)}{f(z)}+\rho>\bigg|\frac{zf'(z)}{f(z)}-\rho\bigg|,\quad \rho>0\right\}.
    \end{gather*}

 \begin{figure}[ht]
   \begin{tabular}{c}
        \includegraphics[height=10cm,width=6cm]{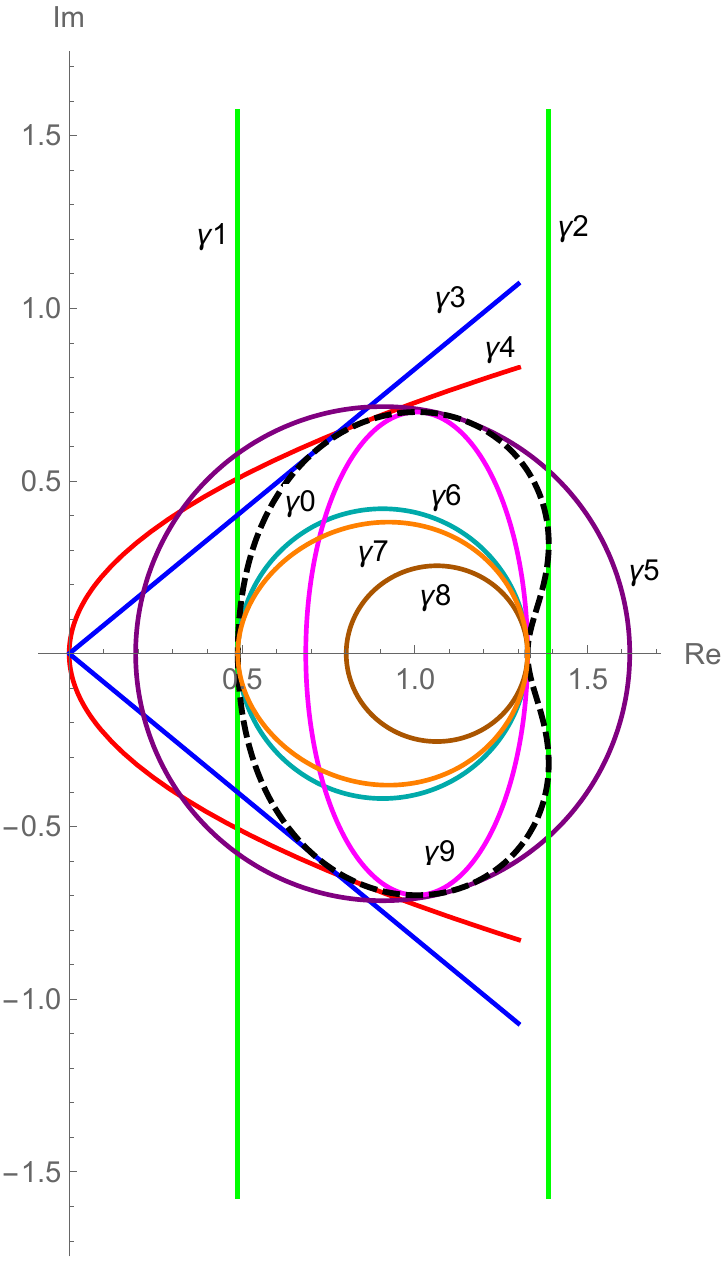}
        \end{tabular}
        \begin{tabular}{l}
\parbox{0.25\linewidth}{
{\bf \underline{Legend} - }\\
$\gamma_{0}:\mathfrak{B}(z)=\sqrt{1+\tanh{z}}$}\\\vspace{0.15cm}
$\gamma_{1}: \RE{w}=\sqrt{2/(1+e^2)}$\\ \vspace{0.15cm}  
$\gamma_{2}:  \RE{w}=e\sqrt{2/(1+e^2)}$\\ \vspace{0.15cm}
$\gamma_{3}:|\arg{w}|=0.438491\pi/2$ \\ \vspace{0.15cm}  
$\gamma_{4}:|w-0.13186|=\RE{w}+0.13186 $\\ \vspace{0.15cm}
$\gamma_{5}: |w-1.006|=0.69949$ \\ \vspace{0.15cm}
$\gamma_{6}: |w-\frac{1+e}{\sqrt{2(1+e^2)}}|=0.41949$\\ \vspace{0.15cm}
$\gamma_{7}: \sqrt{1+\frac{e^2-1}{e^2+1}z}$ \\ \vspace{0.15cm}
$\gamma_{8}: \RE w>\frac{2e}{2e-\sqrt{2(1+e^2)}}|w-1|$\\ \vspace{0.15cm}
$\gamma_{9}:\frac{(\RE{w}-1.006)^2}{(0.321)^2}+\frac{(\IM{w})^2}{(0.69949)^2}$
\end{tabular}
        \caption{Boundary curves of best dominants and subordinants of $\mathfrak{B}(z)$.}
        \label{bean}
    \end{figure}
\begin{theorem}The class $\mathcal{S}^*_{\mathfrak{B}}$ satisfies the following inclusion relations: \begin{itemize}
\item[$(i)$] $\mathcal{S}^*_{\mathfrak{B}}\subset\mathcal{S^{*}}(\alpha)\subset\mathcal{S}^*$, whenever $\alpha=\sqrt{2/(1+e^2)}$.
		\item [$(ii)$]
		$\mathcal{S}^*_{\mathfrak{B}}\subset\mathcal{RS^{*}}(1/\beta)\subset \mathcal{M}(\beta)$, whenever $\beta\geq R(\theta_{0})$ (cf. Lemma \ref{bds}).
		\item [$(iii)$]
		$\mathcal{S}^*_{\mathfrak{B}}\subset\mathcal{SS}^*(\beta)\subset\mathcal{S}^*$ for $\beta_{0}\leq\beta<1$, where $\beta_{0}=0.438491$ (cf. Lemma \ref{bds}).
		\item[$(iv)$] $\mathcal{S}^*_{\mathfrak{B}}\subset\mathcal{SP}(\rho)$, whenever $\rho\geq0.13186.$
		\item[$(v)$] $k-\mathcal{ST}\subset\mathcal{S}^*_{\mathfrak{B}}$, whenever $k\geq 2e/(2e-\sqrt{2(1+e^2)})$.
		\end{itemize}
\end{theorem}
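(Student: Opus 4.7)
The plan is to convert each of the five inclusions into a containment of images in $\mathbb{C}$: for any Ma--Minda style target class characterized by a region $\Omega$ for $zf'/f$, the inclusion $\mathcal{S}^*_{\mathfrak{B}}\subset$ (target) is equivalent to $\mathfrak{B}(\mathbb{D})\subset\Omega$, while the reverse inclusion in (v) amounts to $\Omega_k\subset\mathfrak{B}(\mathbb{D})$. Each containment is then a sharp consequence of the boundary data for $\mathfrak{B}(e^{i\theta})$ already compiled in Lemma \ref{bds}, Lemma \ref{circle} and Remark \ref{rem}.

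Parts (i), (iii) and the second link of (ii) are one-line consequences of those lemmas. In (i), Lemma \ref{bds}(i) gives $\RE\mathfrak{B}(z)\ge\sqrt{2/(1+e^2)}$, so $\RE(zf'/f)>\alpha$. In (iii), Lemma \ref{bds}(iv) gives $|\arg\mathfrak{B}(z)|\le\beta_{0}\pi/2$, which is precisely the sector defining $\mathcal{SS}^*(\beta)$ for $\beta\ge\beta_0$. The tail of (ii) is the algebraic fact that the image of the half-plane $\{\RE w>1/\beta\}$ under $w\mapsto 1/w$ is the disk $D(\beta/2,\beta/2)$, which lies in the half-plane $\{\RE w<\beta\}$.

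The first link of (ii) and parts (iv), (v) require genuine verification that $\mathfrak{B}(\partial\mathbb{D})$ sits on the correct side of a specified curve. For (ii) I would exhibit $\mathfrak{B}(\mathbb{D})\subset D(\beta/2,\beta/2)$ whenever $\beta\ge R(\theta_0)$ by comparing the rightmost point of $\mathfrak{B}(\partial\mathbb{D})$ (at $\theta=\theta_0$) with the rightmost point $\beta$ of the target disk. For (iv) the target is the parabolic interior $v^{2}<4\rho u$, and the optimal $\rho$ is $\max_{\theta\in[0,\pi]}\IM\mathfrak{B}(e^{i\theta})^{2}/(4\RE\mathfrak{B}(e^{i\theta}))$; a direct calculus argument on this real-analytic function of $\theta$ locates the maximum at $\rho\simeq0.13186$. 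For (v), rewriting the $k$-starlike condition reveals $\Omega_{k}$ as an ellipse whose rightmost real-axis point is $k/(k-1)$; setting this equal to the rightmost real-axis point $\mathfrak{B}(1)=e\sqrt{2/(1+e^{2})}$ of $\mathfrak{B}(\mathbb{D})$ yields exactly $k=2e/(2e-\sqrt{2(1+e^{2})})$.

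The main obstacle is the final step in (v): since $\mathfrak{B}(\mathbb{D})$ is a non-convex bean shape while $\Omega_{k}$ is a convex ellipse, one has to rule out the possibility that the extremal contact between $\partial\Omega_{k}$ and $\partial\mathfrak{B}(\mathbb{D})$ occurs off the real axis, so that the rightmost-point comparison really is the binding constraint. I would resolve this by studying the real-analytic signed distance $\theta\mapsto|\mathfrak{B}(e^{i\theta})-c|^{2}-\text{(ellipse radius at that angle)}^{2}$, exploiting the symmetry $\theta\mapsto -\theta$ and the monotonicity already used in Lemma \ref{circle}. A milder version of the same boundary-contact analysis underlies (iv). Once those two geometric verifications are in place, all five inclusions follow directly by subordination.
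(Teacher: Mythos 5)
Your treatment of (i), (iii), (iv) and (v) follows the paper's route. Parts (i) and (iii) are read off from Lemma \ref{bds} exactly as in the paper; part (iv) is the same maximization of $v^{2}/(4u)$ over the boundary curve (the paper locates the maximum $\simeq 0.13186$ at $\theta\simeq 1.8603$). For (v) the paper in fact only checks the two real-axis endpoints $x_{0}\pm u=k/(k\mp 1)$ of the ellipse against $\mathfrak{B}(\pm 1)$ and stops there; your insistence on ruling out off-axis contact between $\partial\Lambda_{k}$ and $\partial\mathfrak{B}(\mathbb{D})$ is a genuine strengthening, since the endpoint test alone is only a necessary condition for containing a convex ellipse in a non-convex bean.

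The gap is in the first inclusion of (ii). You correctly reduce $\mathcal{S}^*_{\mathfrak{B}}\subset\mathcal{RS}^{*}(1/\beta)$ to the containment $\mathfrak{B}(\mathbb{D})\subset D(\beta/2,\beta/2)$, but then propose to verify it by comparing the rightmost point of $\mathfrak{B}(\partial\mathbb{D})$ with the rightmost point $\beta$ of that disk. This test is necessary but not sufficient: $\max\RE\mathfrak{B}\leq\beta$ only places the bean in the half-plane $\RE w<\beta$, not in the disk. The correct condition is $|w|^{2}\leq\beta\,\RE w$ along $\partial\mathfrak{B}(\mathbb{D})$, i.e. $\beta\geq\sup_{\theta}|\mathfrak{B}(e^{i\theta})|^{2}/\RE\mathfrak{B}(e^{i\theta})$, and this supremum exceeds $R(\theta_{0})$: at $\theta_{2}\simeq 1.31364$ one has $|\mathfrak{B}(e^{i\theta_2})|^{2}\simeq 2.069$ and $\RE\mathfrak{B}(e^{i\theta_2})\simeq 1.37$, so the ratio is about $1.51$, whereas $R(\theta_{0})\simeq 1.39$; equivalently, $\left(u-\tfrac{\beta}{2}\right)^{2}+v^{2}\simeq 0.65>0.48\simeq\tfrac{\beta^{2}}{4}$ for $\beta=R(\theta_{0})$, so this boundary point lies outside $D(R(\theta_{0})/2,R(\theta_{0})/2)$. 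Hence the rightmost-point comparison cannot close the argument, and it actually casts doubt on the stated threshold $\beta\geq R(\theta_{0})$. (For what it is worth, the paper's own proof of (ii) is no better: it derives the inclusion solely from the real-part bounds of Lemma \ref{bds}, which likewise do not yield the disk containment.) Your second link, that $w\mapsto 1/w$ sends $\{\RE w>1/\beta\}$ onto $D(\beta/2,\beta/2)\subset\{\RE w<\beta\}$, is fine.
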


\begin{proof}
	Let $f\in \mathcal{S}^*_{\mathfrak{B}}$. From Lemma \ref{bds}, we have, $$\sqrt{\dfrac{2}{1+e^2}}<\RE\dfrac{zf'(z)}{f(z)}<R(\theta_0)$$ and $$\left|\arg\left(\dfrac{zf'(z)}{f(z)}\right)\right|\leq\frac{0.438491\pi}{2}$$ which eventually yields $(i)$, $ii)$ and $(iii)$. \\
	$(iv)$ Let $f\in\mathcal{SP}(\rho)$ and
	\begin{equation*}
	\Gamma_{\rho}	:= \{w:|w-\rho|<\RE w+\rho\}=\{w=u+iv:v^2<4\rho u\}.
	\end{equation*}
	Now $\mathfrak{B}(\mathbb{D})\subset\Gamma_{\rho}$, whenever $\rho>v^2/4u$, where $u:=\RE\mathfrak{B}(z)$ and $v:=\IM\mathfrak{B}(z)$. For $z=e^{i\theta}$, using (\ref{beanbary}) we have
	\begin{equation*}
	T(\theta):=\frac{e^{\cos{t}}N^2_\theta}{4\sqrt{M^2_\theta +
			N^2_\theta} \left(M_\theta +
		\sqrt{M^2_\theta + N^2_\theta}\right)^{3/2}}<\rho
	\end{equation*}
	Further $T'(\theta)=0$ if and only if $\theta\in\{0,\theta_{0},\pi\}$, where $\theta_{0}\simeq1.8603$ and maximum of $T(\theta)$ is attained at $\theta_{0}$. Therefore, $\rho>T(\theta_{0})\simeq0.13186.$ Now since $\mathcal{SP}(\rho_{1})\subset\mathcal{SP}(\rho_{2})$ whenever $\rho_{1}<\rho_{2}$. Therefore, $\mathcal{S}^*_{\mathfrak{B}}\subset\mathcal{SP}(\rho)$, for $\rho\geq0.13186.$
	\\
	$(v)$  Let $f\in k-\mathcal{ST}$ and $\Lambda_{k}:=\{w\in\mathbb{C}:\RE w>k|w-1|\}$. For $k>1$, the boundary $\partial\Lambda_{k}$ is an ellipse
	\begin{equation*}
	\frac{(x-x_{0})^2}{u^2}+\frac{(y-y_{0})^2}{v^2}=1,
	\end{equation*}
	where
	\begin{equation*}
	x_{0}=\frac{k^2}{k^2-1},\quad y_{0}=0,\quad u=\frac{k}{k^2-1}\quad \text{and}\quad v=\frac{1}{\sqrt{k^2-1}}.
	\end{equation*}
	As $u>v$, so this is an horizontal ellipse.  For the ellipse $\Lambda_{k}$ to lie inside $\mathfrak{B}(\mathbb{D})$, we must ensure that
	\begin{equation*}
	\sqrt{\frac{2}{1+e^2}}\leq x_{0}-u \quad \text{and}\quad x_{0}+u\leq e\sqrt{\frac{2}{1+e^2}},
	\end{equation*}
	which holds for \begin{equation*}
	k\geq \max\bigg\{\frac{2}{2-\sqrt{2(1+e^2)}},\frac{2e}{2e-\sqrt{2(1+e^2)}}\bigg\}=\frac{2e}{2e-\sqrt{2(1+e^2)}}.
	\end{equation*}
	Clearly $\Lambda_{k_{1}}\subset\Lambda_{k_{2}}$ for $k_{1}<k_{2}$. Therefore, $k-\mathcal{ST}\subset\mathcal{S}^*_{\mathfrak{B}}$ for $k\geq2e/(2e-\sqrt{2(1+e^2)})$.
\end{proof}

Let us recall that, in \cite{lemni}, Sok\'{o}\l \;considered the starlike class $\mathcal{S}^*(q_c)=\{f\in\mathcal{A};zf'(z)/f(z)\prec\sqrt{1+cz},\;c\in(0,1]\}$ associated with the right loop of the Cassinian Ovals when $c\in(0,1)$ and right half of the Lemniscate of Bernoulli when $c=1.$ Here we obtain the following inclusion relation between the classes $\mathcal{S}^*(q_c)$ and $\mathcal{S}^*_{\mathfrak{B}}$.

\begin{theorem}
    Let $f\in\mathcal{S}^*(q_c)$, $c\in(0,1]$, then $f\in\mathcal{S}^*_{\mathfrak{B}}$ if and only if $c<(e^2-1)/(e^2+1)$. 
\end{theorem}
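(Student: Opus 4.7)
The plan is to translate the class inclusion into a subordination and attack it via an explicit Schwarz function. Since $q_c(z)=\sqrt{1+cz}$ is univalent with $q_c(0)=\mathfrak{B}(0)=1$, the inclusion $\mathcal{S}^*(q_c)\subset \mathcal{S}^*_{\mathfrak{B}}$ is equivalent to the subordination $q_c\prec \mathfrak{B}$, i.e., to the existence of an analytic $\omega:\mathbb{D}\to\mathbb{D}$ with $\omega(0)=0$ and $q_c=\mathfrak{B}\circ\omega$. Squaring this identity collapses the radicals to $1+cz=1+\tanh\omega(z)$, so the only candidate is
\[
\omega(z)=\arctanh(cz)=\tfrac{1}{2}\log\bigl((1+cz)/(1-cz)\bigr),
\]
which is analytic on $\mathbb{D}$ for every $c\in(0,1]$ and satisfies $\omega(0)=0$. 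The subordination question therefore reduces to checking $\omega(\mathbb{D})\subset\mathbb{D}$.

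To estimate $|\omega|$ sharply, I would invoke the Taylor series $\arctanh(w)=\sum_{k\ge 0}w^{2k+1}/(2k+1)$, whose coefficients are strictly positive. The triangle inequality gives $|\arctanh(cz)|\le \arctanh(c|z|)$, with equality only when the terms $c^{2k+1}z^{2k+1}/(2k+1)$ all share a common argument; since positive coefficients are involved, this forces $z^{2k+1}$ to have the same argument for every $k$, hence $z\in\mathbb{R}$. Consequently
\[
\max_{|z|=1}|\omega(z)|=\arctanh(c)=\tfrac{1}{2}\log\bigl((1+c)/(1-c)\bigr),
\]
attained precisely at $z=\pm 1$. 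The condition $\omega(\mathbb{D})\subset\mathbb{D}$ thus becomes $\arctanh(c)<1$, which rearranges algebraically to $c<(e^2-1)/(e^2+1)$.

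For the converse, assume $c\ge (e^2-1)/(e^2+1)$ and take the extremal $f_c\in\mathcal{S}^*(q_c)$ produced by $zf_c'(z)/f_c(z)=q_c(z)$ via \eqref{4}. Along $z=-r$, $q_c(-r)=\sqrt{1-cr}\to \sqrt{1-c}\le \sqrt{2/(1+e^2)}$ as $r\to 1^-$, and by Lemma \ref{bds} the value $\sqrt{2/(1+e^2)}$ is the minimum of $\RE\mathfrak{B}$ on $\overline{\mathbb{D}}$. Hence the values of $q_c$ exit $\mathfrak{B}(\mathbb{D})$ on a neighborhood of $z=-1$, forcing $f_c\notin\mathcal{S}^*_{\mathfrak{B}}$ and ruling out the subordination.

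The main obstacle I anticipate is the sharp maximum calculation in the second step: the fact that the supremum of $|\arctanh(cz)|$ over $|z|=1$ is realized exactly at $z=\pm 1$ hinges on the odd Taylor series of $\arctanh$ having strictly positive coefficients, so that the triangle inequality saturates only when $z^{2k+1}$ has constant argument across $k$, which compels $z^2=1$. Everything else — the algebraic translation and the boundary-escape argument for necessity — is routine.
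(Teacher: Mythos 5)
Your route is genuinely different from the paper's. The paper argues geometrically: it identifies $q_c(\mathbb{D})$ with the Cassinian region $\Lambda_c=\{w:\RE w>0,\ |w^2-1|<c\}$, extracts the necessary conditions $\sqrt{2/(1+e^2)}<\sqrt{1-c}$ and $\sqrt{1+c}<e\sqrt{2/(1+e^2)}$ from the real traces of the two domains, and then checks sufficiency by verifying that the boundary curves $q_c(e^{i\theta})$ and $\mathfrak{B}(e^{i\theta})$ do not intersect for $\theta\in(0,\pi)$ at the critical value of $c$. You instead exploit the fact that both $q_c$ and $\mathfrak{B}$ are square roots of M\"obius-type data, so the candidate Schwarz function $\omega=\mathfrak{B}^{-1}\circ q_c$ is computable in closed form: $\omega(z)=\arctanh(cz)$, and the subordination reduces to the elementary bound $|\arctanh(cz)|\le\arctanh(c|z|)$ coming from the positivity of the Taylor coefficients of $\arctanh$. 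For the sufficiency half this is cleaner and more verifiable than the paper's ``squared distance between the boundary curves is non-zero'' claim, which is asserted rather than proved.

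There is, however, a genuine gap at the threshold, and it sits exactly where your two halves meet. Run your own sufficiency argument carefully on the \emph{open} disk: for $|z|=r<1$ you get $|\omega(z)|\le\arctanh(cr)<\arctanh(c)$, so $\omega(\mathbb{D})\subset\mathbb{D}$ holds whenever $\arctanh(c)\le 1$, i.e.\ whenever $c\le(e^2-1)/(e^2+1)$ — the strict inequality $\arctanh(c)<1$ is not what the condition $\omega(\mathbb{D})\subset\mathbb{D}$ rearranges to, because the supremum $\arctanh(c)$ is never attained inside $\mathbb{D}$. Your necessity argument breaks down at precisely this boundary value: for $c=(e^2-1)/(e^2+1)$ one has $q_c(-r)=\sqrt{1-cr}>\sqrt{1-c}=\sqrt{2/(1+e^2)}$ for every $r<1$, and by Remark \ref{minrem} the bean contains the whole real segment $\bigl(\sqrt{2/(1+e^2)},\,e\sqrt{2/(1+e^2)}\bigr)$, so the values of $q_c$ do \emph{not} exit $\mathfrak{B}(\mathbb{D})$; they only accumulate at a boundary point of it. Thus your method actually proves the inclusion for $c\le(e^2-1)/(e^2+1)$ and disproves it only for $c>(e^2-1)/(e^2+1)$, which contradicts the strict ``only if'' you set out to show. (The paper's proof has the same blind spot — its necessity step silently assumes that touching closures forbids inclusion of the open sets, while its own sufficiency step at the critical $c$ shows the opposite.) If you keep this approach, you should either state the conclusion with $c\le(e^2-1)/(e^2+1)$ or explain why the endpoint is excluded; as written, the endpoint case refutes your converse.
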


\begin{proof}
    Let $f\in\mathcal{S}^*(q_c)$ and $\Lambda_c=\{w\in\mathbb{C}:\RE w>0,|w^2-1|<c\}$ be the region bounded by the cassinian ovals. Then $$\sqrt{1-c}<\RE\dfrac{zf'(z)}{f(z)}<\sqrt{1+c}.$$ Now to find the range of $c$ for which $\Lambda_c$ lies inside $\mathfrak{B}(\mathbb{D})$, we must ensure that 
    \begin{equation*}
        \sqrt{\dfrac{2}{1+e^2}}<\sqrt{1-c}\;\;\;\text{and}\;\;\;e\sqrt{\dfrac{2}{1+e^2}}>\sqrt{1+c}.
    \end{equation*}Clearly, if $c\in(0,(e^2-1)/(e^2+1))$ then the above necessary conditions hold. To establish sufficiency, consider $z=e^{i \theta}$, then for $c=(e^2-1)/(e^2+1)$, we have
    $$q_c(\theta):=\dfrac{(2(1+e^4+(e^4-1)\cos\theta))^{\frac{1}{4}}\left(\cos t+i\sin t\right)}{\sqrt{1+e^2}},$$ 
    where $t=\dfrac{1}{2}\arctan\dfrac{(e^2-1)\sin\theta}{1+e^2+(e^2-1)\cos\theta}$. Notably, owing to the symmetry of $q_c(\theta)$ about the real axis, our consideration of $\theta$ is restricted to the range $[0,\pi]$. For $c=(e^2-1)/(e^2+1)$, it can be easily observed  that the squared distance between the boundary curves $q_c(\theta)$ and $\mathfrak{B}(e^{i\theta})$ is always non-zero for $\theta\in(0,\pi)$. Consequently, the curves do not intersect, affirming that $\Lambda_c$ remains confined within $\mathfrak{B}(\mathbb{D})$. This concludes the proof.
    \end{proof}


\section{Differential Subordination Results}
In this section, we need the following lemma from \cite{miller}:
\begin{lemma}\label{MM}
	\emph{\cite[Theorem 3.4h,p.132]{miller}} Let $g(z)$ be univalent in $\mathbb{D}$,  $\Phi$ and $\Theta$ be  analytic in a domain $\Omega$ containing $g(\mathbb{D})$ such that $\Phi(w)\neq0,$ when $w\in g(\mathbb{D})$. Now letting $Q(z)=zg'(z)\cdot\Phi(g(z))$, $h(z)=\Theta(g(z))+Q(z)$ and either  $h$ is convex  or $G$ is starlike. In addition, if \begin{equation*}
	\RE\frac{zh'(z)}{Q(z)}=\RE\bigg(\frac{\Theta'(g(z))}{\Phi(g(z))}+\frac{zQ'(z)}{Q(z)}\bigg)>0
	\end{equation*}  and let $p$ be analytic  in $\mathbb{D},$ with $p(0)=g(0),$ $p(\mathbb{D})\subset \Omega$ and 
	\begin{equation*}
	\Theta(p(z))+zp'(z)\cdot\Phi( p(z))\prec \Theta(g(z))+zg'(z)\cdot\Phi(g(z))=:h(z)
	\end{equation*}
	then $p\prec g$, and $g$ is the best dominant.
\end{lemma}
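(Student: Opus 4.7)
The plan is to argue by contradiction using the standard Miller--Mocanu admissibility framework for first-order differential subordinations. Suppose, toward a contradiction, that $p \not\prec g$. Since $p(0) = g(0)$ and $g$ is univalent, the classical sandwich lemma (a subordination analogue of Jack's lemma) guarantees the existence of points $z_0 \in \mathbb{D}$ and $\zeta_0 \in \partial \mathbb{D}$, together with a real number $m \geq 1$, such that
$$p(z_0) = g(\zeta_0) \in \partial g(\mathbb{D}) \cap \Omega, \qquad z_0 p'(z_0) = m\, \zeta_0 g'(\zeta_0).$$

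Next, I would substitute these identities into the left-hand side of the assumed subordination to produce a candidate boundary point. Writing $Q(\zeta_0) = \zeta_0 g'(\zeta_0)\Phi(g(\zeta_0))$, the substitution gives
$$\Theta(p(z_0)) + z_0 p'(z_0)\,\Phi(p(z_0)) = \Theta(g(\zeta_0)) + m\, Q(\zeta_0) =: w_m,$$
and since $h(\zeta_0) = \Theta(g(\zeta_0)) + Q(\zeta_0)$, this rewrites as $w_m = h(\zeta_0) + (m-1)\,Q(\zeta_0)$. The subordination hypothesis forces $w_m \in h(\mathbb{D})$, so the contradiction will follow once I establish that $w_m \notin h(\mathbb{D})$ for every $m \geq 1$.

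The geometric step, which I anticipate to be the main obstacle, splits into two cases depending on whether $h$ is convex or $Q$ is starlike. Taking boundary limits of the assumed inequality $\RE(zh'(z)/Q(z)) > 0$ yields $\RE(\zeta_0 h'(\zeta_0)/Q(\zeta_0)) \geq 0$; equivalently, the vector $Q(\zeta_0)$ has a nonnegative component along $\zeta_0 h'(\zeta_0)$, which is precisely the forward tangent direction to $\partial h(\mathbb{D})$ at $h(\zeta_0)$. If $h$ is convex, then $h(\mathbb{D})$ lies in the closed half-plane bounded by the supporting line at $h(\zeta_0)$ whose inward direction is $i\,\zeta_0 h'(\zeta_0)$; the positivity condition then forces the translation $(m-1)Q(\zeta_0)$ (with $m > 1$) to move $w_m$ strictly off this half-plane, and for $m = 1$ we simply have $w_1 = h(\zeta_0) \in \partial h(\mathbb{D}) \setminus h(\mathbb{D})$. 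If instead $Q$ is starlike with respect to the origin, one applies the analogous argument using that the ray $\{tQ(\zeta_0) : t \geq 1\}$ lies outside $Q(\mathbb{D})$, combined with the decomposition $h = \Theta \circ g + Q$ and the positivity condition, to reach the same conclusion.

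In either case $w_m \notin h(\mathbb{D})$, contradicting the subordination hypothesis. Therefore $p \prec g$. That $g$ is the \emph{best} dominant is immediate: $g$ itself fulfills the subordination on the left with equality, so any dominant $q$ must satisfy $g \prec q$, making $g$ minimal in the subordination order.
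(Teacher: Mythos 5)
The paper offers no proof of this lemma: it is quoted (with the typo ``$G$ is starlike'' for ``$Q$ is starlike'') from Miller and Mocanu's monograph, so there is no in-paper argument to compare against. Your skeleton is the canonical proof of that cited theorem: assume $p\not\prec g$, invoke the Miller--Mocanu boundary lemma to get $z_0\in\mathbb{D}$, $\zeta_0\in\partial\mathbb{D}$, $m\ge 1$ with $p(z_0)=g(\zeta_0)$ and $z_0p'(z_0)=m\,\zeta_0 g'(\zeta_0)$, rewrite the left-hand side as $h(\zeta_0)+(m-1)Q(\zeta_0)$, and show this point escapes $h(\mathbb{D})$. The best-dominant remark at the end is also correct.

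Two things need attention. First, a labelling slip: the forward tangent to $\theta\mapsto h(e^{i\theta})$ is $i\zeta_0h'(\zeta_0)$, so $\zeta_0h'(\zeta_0)$ is the \emph{outward normal}, not the tangent, and the inward direction of the supporting half-plane is $-\zeta_0h'(\zeta_0)$, not $i\zeta_0h'(\zeta_0)$; the inequality you actually need, $\RE\bigl(Q(\zeta_0)\overline{\zeta_0h'(\zeta_0)}\bigr)\ge 0$, is the right one, so the convex case survives. The genuine gap is the starlike case: knowing that the ray $\{tQ(\zeta_0):t\ge1\}$ avoids $Q(\mathbb{D})$ says nothing by itself about where $h(\zeta_0)+(m-1)Q(\zeta_0)$ sits relative to $h(\mathbb{D})$, since $h=\Theta\circ g+Q$ is not $Q$, and no supporting line is available when $h$ is not convex. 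The standard way to close this is to show $L(z,t)=h(z)+tQ(z)$, $t\ge0$, is a subordination chain, using
\begin{equation*}
\RE\frac{z\,\partial L/\partial z}{\partial L/\partial t}=\RE\frac{zh'(z)}{Q(z)}+t\,\RE\frac{zQ'(z)}{Q(z)}>0,
\end{equation*}
which is where starlikeness of $Q$ and the hypothesis $\RE(zh'(z)/Q(z))>0$ both enter; then $h(\mathbb{D})=L(\mathbb{D},0)\subset L(\mathbb{D},m-1)$ while $h(\zeta_0)+(m-1)Q(\zeta_0)$ lies on the boundary of the larger domain, hence outside $h(\mathbb{D})$. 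This case cannot be waved off as ``analogous'': it is precisely the alternative the paper verifies in its applications (the authors check that $Q$ is starlike, never that $h$ is convex). Finally, since $g$ need not extend continuously to $\partial\mathbb{D}$, the whole argument should be run for $g(\rho z)$, $\rho<1$, and concluded by letting $\rho\to1$.
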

Now we have the following lemma which is needed to prove our results:
\begin{lemma}\label{subradi}
	Let $R_{0}=e\sqrt{2/(1+e^2)}\simeq1.32725$. Then
	\begin{equation*}
	\bigg|\log\bigg(\frac{w^2}{2-w^2}\bigg)\bigg|\geq2\quad\text{if and only if} \quad |w|\geq R_{0}.
	\end{equation*}
\end{lemma}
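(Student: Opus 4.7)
The starting point is the algebraic identity derived from $R_0^2 = 2e^2/(1+e^2)$: since $2 - R_0^2 = 2/(1+e^2)$, we have $R_0^2/(2-R_0^2) = e^2$ and hence $\log(R_0^2/(2-R_0^2)) = 2$. This pins down $R_0$ as the threshold value and handles the boundary case $w = R_0$ (real positive) as an equality on both sides of the claimed equivalence.

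Next I would exploit the substitution $z = \tfrac12 \log(w^2/(2-w^2))$, which is inverted by $w = \mathfrak{B}(z)$: this follows by rearranging $\mathfrak{B}^2(z) = 2/(1+e^{-2z})$ into $e^{2z} = \mathfrak{B}^2/(2-\mathfrak{B}^2)$. Under this substitution, the condition $|\log(w^2/(2-w^2))| \geq 2$ becomes $|z| \geq 1$, so the lemma reduces to the assertion $|z| \geq 1 \iff |\mathfrak{B}(z)| \geq R_0 = \mathfrak{B}(1)$. On the positive real axis this is immediate from the strict monotonicity of the real function $\mathfrak{B}(x)$ in $x$, with $\mathfrak{B}(1) = R_0$ providing the equality case.

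For the general complex case I would set $u := w^2/(2-w^2)$ and parametrize $w = Re^{i\phi}$ to obtain
\[
|u|^2 = \frac{R^4}{R^4 - 4R^2\cos 2\phi + 4}, \qquad \arg u = 2\phi - \arg\bigl(2 - R^2 e^{2i\phi}\bigr),
\]
together with $|\log u|^2 = (\log|u|)^2 + (\arg u)^2$. The aim is to show that for each fixed $R$ the minimum of $|\log u|$ over $\phi$ is attained on the positive real axis, which reduces the general claim to the real-axis monotonicity argument already described.

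The main obstacle is the simultaneous control of $\log|u|$ and $\arg u$, which may pull in opposite directions, together with the choice of branch for the logarithm. I plan to resolve the branch issue by fixing the canonical branch through the conformal parametrization by $\mathfrak{B}$, and then invoke the extremal estimates for $|\mathfrak{B}(z)|$ and $\arg \mathfrak{B}(z)$ recorded in Lemma \ref{bds} to locate the critical $\phi$ and close the argument.
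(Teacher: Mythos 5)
Your write-up is an outline rather than a proof: the step you yourself label ``the main obstacle'' --- that for fixed $R=|w|$ the minimum of $|\log(w^2/(2-w^2))|$ over $\phi=\arg w$ is attained at $\phi=0$ --- is precisely the single nontrivial assertion in the paper's own proof (there stated, without justification, as ``$g(R,\theta)$ attains its minimum at $\theta=0$''), and you only announce an intention to prove it by invoking Lemma \ref{bds}. The parts you do carry out (the identity $R_0^2/(2-R_0^2)=e^2$, the inversion $w=\mathfrak{B}(z)$, the polar formula for $|u|$) are routine; the lemma stands or falls with the angular minimization, so as written there is a genuine gap at the decisive point.

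Moreover, that gap cannot be closed, because the statement you reduce to, $|z|\geq1\iff|\mathfrak{B}(z)|\geq R_0$, is false --- and Lemma \ref{bds}(iii), the very result you plan to invoke, shows why: $\max_{|z|=1}|\mathfrak{B}(z)|=\sqrt{A(\theta_2)}\simeq1.4385>R_0\simeq1.3273$, so by continuity there are points $z$ with $|z|<1$ and $|\mathfrak{B}(z)|>R_0$, i.e.\ points $w\in\mathfrak{B}(\mathbb{D})$ of modulus at least $R_0$ that nevertheless satisfy $|\log(w^2/(2-w^2))|<2$. Concretely, $w=1.264+0.405i$ has $|w|^2\simeq1.7617=R_0^2$, yet $w^2/(2-w^2)\simeq-0.173+1.496i$ and $|\log(w^2/(2-w^2))|\simeq1.73<2$ (indeed $w=\mathfrak{B}(z)$ with $|z|\simeq0.867$). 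The converse implication also fails, since for small $|w|$ (say $w=0.1$) one has $|\log(w^2/(2-w^2))|>5$ while $|w|<R_0$. So the angular minimum is not at $\phi=0$, your reduction and the paper's proof break at the same step, and the equivalence in the lemma is only valid when restricted to the real segment $w\in(1,\sqrt{2})$, where your monotonicity argument does apply. Any repair must either restrict the lemma to that setting or replace $R_0$ by $\sqrt{A(\theta_2)}$ in the ``if'' direction.
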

\begin{proof}
	Let $w=R e^{i \theta}$ where $R>0$ and $\theta\in(0,2\pi].$ Then
		\begin{gather*}
	\bigg|\log\bigg(\frac{w^2}{2-w^2}\bigg)\bigg|\geq2
	\intertext{if and only if}
	g(R,\theta):=\bigg(\log\frac{R^2}{\sqrt{4+R^4-4R^2\cos(2\theta)}}\bigg)^2+\bigg(\arctan\bigg(\frac{2\sin(2\theta)}{2\cos(2\theta)-R^2}\bigg)\bigg)^2\geq4.
	\end{gather*}
	$g(R,\theta)$ attains its minimum at $\theta=0$. Thus the above inequality holds if and only if
	\begin{gather*}
	\bigg(\log\bigg(\frac{R^2}{2-R^2}\bigg)\bigg)^2\geq4
	\intertext{which is true if and only if}
	R\geq\sqrt\frac{2e^2}{1+e^2}\simeq1.32725.
	\end{gather*}
	Thus the result holds.
\end{proof}

\begin{theorem}\label{thm1}
Let $\alpha\in[0,1]$, $\gamma\in(0,1]$, $k\in\{-1,0\}$ and $\beta\in\mathbb{C}\backslash\{0\}$ with $\RE \beta>0$. Suppose $p\in\mathcal{H}$ satisfies
 \begin{equation*}
(1-\alpha)p(z)+\alpha p^2(z)+\beta \frac{zp'(z)}{p^k(z)}\prec \sqrt{1+\tanh{z}},
\end{equation*}
 then $$p(z)\prec\left(\dfrac{1+Az}{1+Bz}\right)^\gamma\qquad -1<B<A\leq1,$$ whenever
\begin{equation}\label{2}
    |\beta|\geq\dfrac{1}{\gamma (A-B)}\left(R_0+\alpha\left(\dfrac{1+A}{1+B}\right)^{2\gamma}+(1-\alpha)\left(\dfrac{1+A}{1+B}\right)^{\gamma}\right)\dfrac{(1+A)^{\gamma(k-1)+1}}{(1+B)^{\gamma(k-1)-1}},
\end{equation} where  $R_{0}=e\sqrt{2/(1+e^2)}\simeq1.32725$.
\end{theorem}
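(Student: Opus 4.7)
The plan is to invoke the Miller--Mocanu subordination lemma (Lemma~\ref{MM}) with the candidate best dominant $g(z)=((1+Az)/(1+Bz))^\gamma$, and with $\Theta(w)=(1-\alpha)w+\alpha w^2$, $\Phi(w)=\beta w^{-k}$, so that the operator in the hypothesis is exactly $\Theta(p(z))+zp'(z)\Phi(p(z))$. Using $zg'(z)/g(z)=\gamma(A-B)z/((1+Az)(1+Bz))$, a direct calculation produces
\[
Q(z)=zg'(z)\Phi(g(z))=\frac{\beta\gamma(A-B)\,z\,g(z)^{1-k}}{(1+Az)(1+Bz)},\qquad h(z)=\Theta(g(z))+Q(z).
\]
Since $g$ is a Janowski-type mapping with $g(\mathbb{D})$ lying in the right half-plane for the admissible range $-1<B<A\leq 1$, $g$ is nonvanishing on $\mathbb{D}$ and $\Phi(g)\neq 0$ on $g(\mathbb{D})$.

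The first task is to check the admissibility hypotheses of Lemma~\ref{MM}: that $Q$ is starlike (or alternatively that $h$ is convex) and that
\[
\RE\bigg(\frac{\Theta'(g(z))}{\Phi(g(z))}+\frac{zQ'(z)}{Q(z)}\bigg)>0.
\]
These are routine calculations using $\RE\beta>0$, $1-k\in\{1,2\}$, the Janowski structure of $g$, and the fact that $g(\mathbb{D})$ sits in the right half-plane; the explicit form
\[
\frac{zQ'(z)}{Q(z)}=1+(1-k)\frac{zg'(z)}{g(z)}-\frac{Az}{1+Az}-\frac{Bz}{1+Bz}
\]
is useful for the bookkeeping. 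Once admissibility is in hand, Lemma~\ref{MM} yields $p\prec g$ as soon as $\Theta(p)+zp'\Phi(p)\prec h$, and by transitivity of subordination combined with the standing hypothesis $\Theta(p)+zp'\Phi(p)\prec\sqrt{1+\tanh z}$ the theorem reduces to establishing $\sqrt{1+\tanh z}\prec h(z)$, equivalently $\mathfrak{B}(\mathbb{D})\subset h(\mathbb{D})$.

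This last containment is the crux of the proof and is where hypothesis (\ref{2}) enters. Lemma~\ref{subradi} gives $\mathfrak{B}(\mathbb{D})\subset\{w:|w|<R_0\}$, so it suffices to show $|h(z)|\geq R_0$ for every $|z|=1$ (combined with $h(0)=1$ and a standard minimum-principle argument this yields the inclusion). Applying the triangle inequality,
\[
|h(z)|\geq|\beta|\bigg|\frac{zg'(z)}{g(z)^k}\bigg|-(1-\alpha)|g(z)|-\alpha|g(z)|^2,
\]
a short calculation gives $|g(z)|\leq((1+A)/(1+B))^\gamma$ with equality at $z=1$, and
\[
\bigg|\frac{zg'(z)}{g(z)^k}\bigg|_{z=1}=\frac{\gamma(A-B)}{(1+A)^{\gamma(k-1)+1}(1+B)^{1-\gamma(k-1)}}.
\]
Imposing $|h(z)|\geq R_0$ with these extremal values substituted on the right-hand side rearranges to exactly (\ref{2}). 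The main technical obstacle I anticipate is justifying that the value of $|zg'(z)/g(z)^k|$ at $z=1$ provides a valid lower bound across the entire circle $|z|=1$: since the exponents $\gamma(1-k)\pm 1$ can have opposite signs (notably when $k=-1$ with $\gamma>1/2$), the factors $|1+Ae^{i\theta}|^{\gamma(1-k)-1}$ and $|1+Be^{i\theta}|^{-\gamma(1-k)-1}$ may vary in opposite directions as $\theta$ traverses $[0,\pi]$, so this step requires a case analysis by the sign of $\gamma(1-k)-1$ together with a monotonicity check on each factor.
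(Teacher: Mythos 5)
Your proposal follows essentially the same route as the paper: the same choices of $\Theta(w)=(1-\alpha)w+\alpha w^2$, $\Phi(w)=\beta w^{-k}$ and dominant $q(z)=((1+Az)/(1+Bz))^\gamma$ in Lemma~\ref{MM}, the same reduction via Lemma~\ref{subradi} to the boundary estimate $|h(e^{i\theta})|\geq R_0$, and the same triangle-inequality substitution of the extremal values at $z=1$ to arrive at (\ref{2}). The ``technical obstacle'' you flag --- justifying that the values of $|q|$ and $|zq'/q^k|$ at $z=1$ are genuinely extremal over the whole circle (delicate when $B<0$ or when the exponents $\gamma(k-1)\pm1$ have opposite signs) --- is exactly the step the paper dispatches with the unproved assertion ``which is true whenever conditions in (\ref{2}) holds,'' so you have identified, rather than introduced, the one point where the argument remains incomplete.
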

\begin{proof}
Let $q(z)=((1+Az)/(1+Bz))^\gamma$. Taking $\Theta(w)=(1-\alpha)w+\alpha w^2$ and $\Phi(w)=\beta/w^k$ in Lemma \ref{MM}, the function  $Q:\mathbb{D}\to \mathbb{C}$ becomes 
$$Q(z)=\dfrac{\beta zq'(z)}{q^k(z)}=\dfrac{\beta\gamma(A-B)z(1+Bz)^{\gamma(k-1)-1}}{(1+Az)^{\gamma(k-1)+1}}.$$
Since, \begin{gather*}
    \dfrac{zQ'(z)}{Q(z)}=1+(\gamma(k-1)-1)\dfrac{Bz}{1+Bz}-(\gamma(k-1)+1)\dfrac{Az}{1+Az}
    \intertext{and after a computation we obtain,}
    \RE\dfrac{zQ'(z)}{Q(z)}\geq\dfrac{1-AB+\gamma(A-B)(1-k)}{(1-A)(1-B)}\geq0,
    \end{gather*} therefore $Q(z)$ is starlike for $k=-1,0,1$. Further, for the function $h$ defined by 
$$h(z):=\Theta(q(z))+Q(z)=(1-\alpha)q(z)+\alpha q^2(z)+Q(z),$$ we have 
$$\dfrac{zh'(z)}{Q(z)}=\dfrac{(1-\alpha)q^k(z)}{\beta}+\dfrac{2\alpha}{\beta}q^{k+1}(z)+\dfrac{zQ'(z)}{Q(z)}.$$ 
Since $\RE q^{k}(z)>0$ for $k=\{-1,0,1\}$, thus $\RE zh'(z)/Q(z)>0$  for $k=\{-1,0\}$. Hence, by Lemma \ref{MM}, we have $p(z)\prec q(z)$, whenever $$(1-\alpha)p(z)+\alpha p^2(z)+\beta \frac{zp'(z)}{p^k(z)}\prec(1-\alpha)q(z)+\alpha q^2(z)+\beta \frac{zq'(z)}{q^k(z)}.$$ Thus, it is enough to show that  now $$\sqrt{1+\tanh{z}}\prec(1-\alpha)q(z)+\alpha q^2(z)+\beta \frac{zq'(z)}{q^k(z)}=h(z).$$
Let $w=\mathfrak{B}(z)=\sqrt{1+\tanh(z)}$. Then $\mathfrak{B}^{-1}(w)=1/2\log(w^2/(2-w^2))$. Therefore the subordination $\mathfrak{B}(z)\prec h(z)$ is equivalent to $z\prec\mathfrak{B}^{-1}(h(z))$. Thus we only need to show $|\mathfrak{B}^{-1}(h(e^{i\theta}))|\geq1$ where $0\leq\theta\leq2\pi$ which is true if and only if
\begin{gather*}
	\left|\log\left(\frac{\left((1-\alpha)\left(\tfrac{1+Ae^{i\theta}}{1+Be^{i\theta}}\right)^\gamma+\alpha\left(\tfrac{1+Ae^{i\theta}}{1+Be^{i\theta}}\right)^{2\gamma}+\tfrac{\beta\gamma(A-B) e^{i\theta}(1+Be^{i\theta})^{\gamma(k-1)-1}}{(1+Ae^{i\theta})^{\gamma(k-1)+1}}\right)^2}{2-\left((1-\alpha)\left(\tfrac{1+Ae^{i\theta}}{1+Be^{i\theta}}\right)^\gamma+\alpha\left(\tfrac{1+Ae^{i\theta}}{1+Be^{i\theta}}\right)^{2\gamma}+\tfrac{\beta\gamma(A-B) e^{i\theta}(1+Be^{i\theta})^{\gamma(k-1)-1}}{(1+Ae^{i\theta})^{\gamma(k-1)+1}}\right)^2}\right)\right|\geq2.
	\intertext{By Lemma \ref{subradi}, it follows that the above inequality holds whenever}
	\bigg|(1-\alpha)\left(\tfrac{1+Ae^{i\theta}}{1+Be^{i\theta}}\right)^\gamma+\alpha\left(\tfrac{1+Ae^{i\theta}}{1+Be^{i\theta}}\right)^{2\gamma}+\tfrac{\beta\gamma(A-B) e^{i\theta}(1+Be^{i\theta})^{\gamma(k-1)-1}}{(1+Ae^{i\theta})^{\gamma(k-1)+1}}\bigg|\geq R_{0},
	\end{gather*}
which is true whenever conditions in (\ref{2}) holds.
\end{proof}

Note that the theorem stated above, Theorem \ref{thm1}, is applicable for $k=1$ whenever $\gamma\in(0,1/2]$. Specifically, we derive the following extended result from Theorem \ref{thm1} when $A=1$, $B=0$, and $\gamma=1/2$.

\begin{theorem}\label{subthm}
	Let $\alpha\in[0,1]$, $k\in\{-1,0,1\}$ and $\beta\in\mathbb{C}\backslash\{0\}$ with $\RE \beta>0$. Suppose $p\in\mathcal{H}$  satisfies
 \begin{equation*}
(1-\alpha)p(z)+\alpha p^2(z)+\beta \frac{zp'(z)}{p^k(z)}\prec \sqrt{1+\tanh{z}},
\end{equation*}
 then $p(z)\prec\sqrt{1+z}$, whenever
\begin{equation*}
    |\beta|\geq(R_0+2\alpha+(1-\alpha)\sqrt{2})2^{(k+3)/2},
\end{equation*} where $R_{0}=e\sqrt{2/(1+e^2)}\simeq1.32725$.
\end{theorem}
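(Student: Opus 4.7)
My plan is to specialize the argument of Theorem \ref{thm1} to $A=1$, $B=0$, $\gamma=1/2$, so that the subordinant becomes $q(z) = \sqrt{1+z}$, and to extend the admissibility verification to cover the additional case $k=1$. The choice $\gamma=1/2$ is precisely what makes $k=1$ accessible: with this exponent $q^{k+1}(z) = (1+z)^{(k+1)/2}$ has positive real part on $\mathbb{D}$ for every $k\in\{-1,0,1\}$, whereas for $\gamma>1/2$ the quantity $q^2$ could stray outside the right half-plane, which is the reason $k=1$ was excluded in Theorem \ref{thm1}.

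I would apply Lemma \ref{MM} with $\Theta(w)=(1-\alpha)w+\alpha w^2$ and $\Phi(w)=\beta/w^k$, producing
\[
Q(z) = \frac{\beta\,z\,q'(z)}{q^k(z)} = \frac{\beta z}{2(1+z)^{(k+1)/2}}, \qquad \frac{zQ'(z)}{Q(z)} = 1 - \frac{(k+1)z}{2(1+z)},
\]
and check $\RE(zQ'/Q)>0$ on $\mathbb{D}$ case-by-case for $k\in\{-1,0,1\}$ (for $k=1$ the quotient collapses to $1/(1+z)$), so that $Q$ is starlike. For the admissibility expression
\[
\frac{zh'(z)}{Q(z)} = \frac{(1-\alpha)q^k(z)+2\alpha q^{k+1}(z)}{\beta} + \frac{zQ'(z)}{Q(z)},
\]
the $k=1$ case---the genuinely new one---is settled by noting that both $q=\sqrt{1+z}$ and $q^2=1+z$ have positive real part on $\mathbb{D}$, so the hypothesis $\RE\beta>0$ forces $\RE(zh'/Q)>0$; the cases $k\in\{-1,0\}$ are inherited verbatim from Theorem \ref{thm1}.

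Having verified admissibility, Lemma \ref{MM} reduces the claim to the subordination $\mathfrak{B}(z)\prec h(z)$ with $h=\Theta(q)+Q$. Mirroring the proof of Theorem \ref{thm1}, I would use $\mathfrak{B}^{-1}(w)=\tfrac{1}{2}\log(w^2/(2-w^2))$ together with Lemma \ref{subradi} to reduce the task to proving $|h(e^{i\theta})|\geq R_0$ for all $\theta$. A reverse triangle inequality, combined with the identities $|q(e^{i\theta})|^2=|1+e^{i\theta}|$ and $|Q(e^{i\theta})|=|\beta|/(2|1+e^{i\theta}|^{(k+1)/2})$, then shifts the inequality to
\[
\frac{|\beta|}{2\,s^{(k+1)/2}} - (1-\alpha)\sqrt{s} - \alpha s \;\geq\; R_0 \qquad \text{for all } s = |1+e^{i\theta}| \in [0,2].
\]
Since the left-hand side is monotonically non-increasing in $s$ (all three terms move in the same direction), the binding constraint sits at $s=2$ (that is, $\theta=0$), yielding precisely $|\beta|\geq (R_0+2\alpha+(1-\alpha)\sqrt{2})\,2^{(k+3)/2}$. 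The only conceptually new step is the admissibility verification for $k=1$; once $\gamma=1/2$ has been recognised as the threshold that keeps $q^2$ inside the right half-plane, the rest is a mechanical specialisation of the argument for Theorem \ref{thm1}.
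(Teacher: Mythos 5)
Your proposal is correct and is exactly the route the paper intends: the paper omits the proof of this theorem, stating only that it is ``much akin to'' Theorem~\ref{thm1}, and your argument is precisely that proof specialized to $A=1$, $B=0$, $\gamma=1/2$ (your final bound agrees with substituting these values into (\ref{2}), and your observation that $\gamma=1/2$ is what admits $k=1$ matches the paper's remark preceding the theorem). The only caveat is that your admissibility step for $k=1$ --- inferring $\RE(zh'(z)/Q(z))>0$ from $\RE q^{k}>0$, $\RE q^{k+1}>0$ and $\RE\beta>0$ --- glosses over the fact that dividing a right-half-plane quantity by $\beta$ need not preserve positivity of the real part, but this is the same level of rigor the paper itself employs for $k\in\{-1,0\}$ in Theorem~\ref{thm1}.
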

Since the proof is much akin to the Theorem \ref{thm1}, it is omitted here.

\begin{theorem}
	Let $\delta\in\{0,1\}$, $k\in\{-1,0,1\}$, $\gamma\in(0,1]$ and $\beta\in\mathbb{C}\backslash\{0\}$ with $\RE \beta>0$. Suppose $p\in\mathcal{H}$ satisfies
	\begin{equation*}
	(p(z))^\delta+\beta \frac{zp'(z)}{(p(z))^k}\prec \sqrt{1+\tanh(z)},
	\end{equation*}
	 then $$p(z)\prec\left(\dfrac{1+Az}{1+Bz}\right)^\gamma\qquad -1< B<A\leq1,$$ whenever
	\begin{equation}\label{janowski-2}
	|\beta|\geq \dfrac{1}{\gamma (A-B)}\left(R_0+\left(\dfrac{1+A}{1+B}\right)^{\delta\gamma}\right)\dfrac{(1+A)^{\gamma(k-1)+1}}{(1+B)^{\gamma(k-1)-1}},
	\end{equation}
 where $R_{0}=e\sqrt{2/(1+e^2)}\simeq1.32725$.
\end{theorem}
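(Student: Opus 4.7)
The plan is to mirror the proof of Theorem \ref{thm1}, with the outer operator $\Theta(w)=(1-\alpha)w+\alpha w^2$ replaced by $\Theta(w)=w^\delta$. Set $q(z)=\left((1+Az)/(1+Bz)\right)^\gamma$ and apply Lemma \ref{MM} with $\Theta(w)=w^\delta$ and $\Phi(w)=\beta/w^k$. The resulting
$$
Q(z)=\frac{\beta zq'(z)}{q^k(z)}=\frac{\beta\gamma(A-B)z(1+Bz)^{\gamma(k-1)-1}}{(1+Az)^{\gamma(k-1)+1}}
$$
is identical to the function $Q$ produced in the proof of Theorem \ref{thm1}, and its starlikeness for $k\in\{-1,0,1\}$ has already been verified there, so the second hypothesis of Lemma \ref{MM} is in hand.

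Next I would check the admissibility condition for $h(z):=q^\delta(z)+Q(z)$:
$$
\RE\frac{zh'(z)}{Q(z)}=\RE\!\left(\frac{\delta\, q^{\delta+k-1}(z)}{\beta}\right)+\RE\frac{zQ'(z)}{Q(z)}>0.
$$
When $\delta=0$ the first term vanishes and positivity follows from the starlikeness of $Q$. When $\delta=1$ the first term is $\RE(q^k/\beta)$, which is positive because $\RE q^k(z)>0$ in $\mathbb{D}$ whenever $k\in\{-1,0,1\}$ and $\gamma\in(0,1]$, combined with $\RE\beta>0$, exactly as in the companion estimate of Theorem \ref{thm1}. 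Lemma \ref{MM} then reduces the original implication to the single outer subordination $\mathfrak{B}(z)\prec h(z)$.

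Using the conformal inverse $\mathfrak{B}^{-1}(w)=\tfrac{1}{2}\log(w^2/(2-w^2))$, the subordination $\mathfrak{B}\prec h$ is equivalent to $z\prec\mathfrak{B}^{-1}(h(z))$, and by Lemma \ref{subradi} it suffices to verify the pointwise bound $|h(e^{i\theta})|\geq R_0$ for every $\theta\in[0,2\pi)$. Invoking the reverse triangle inequality,
$$
|h(e^{i\theta})|\;\geq\;|Q(e^{i\theta})|-|q(e^{i\theta})|^{\delta},
$$
it remains to substitute $\max_\theta|q(e^{i\theta})|^{\delta}=((1+A)/(1+B))^{\delta\gamma}$, which is attained at $\theta=0$ because the Janowski disc $q(\mathbb{D})$ has its farthest point from the origin at $q(1)$, together with $\min_\theta|Q(e^{i\theta})|=|Q(1)|=|\beta|\gamma(A-B)(1+B)^{\gamma(k-1)-1}/(1+A)^{\gamma(k-1)+1}$. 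The resulting inequality
$$
|\beta|\gamma(A-B)\frac{(1+B)^{\gamma(k-1)-1}}{(1+A)^{\gamma(k-1)+1}}\;\geq\;R_0+\Big(\frac{1+A}{1+B}\Big)^{\delta\gamma}
$$
rearranges to exactly the threshold (\ref{janowski-2}).

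The main obstacle I expect is locating the extrema of $|Q(e^{i\theta})|$ on the unit circle. Verifying that the minimum is indeed attained at $\theta=0$ requires a short calculus argument on the squared modulus, which has to be carried out separately for each of the three values $k\in\{-1,0,1\}$ because the exponents $\gamma(k-1)\pm 1$ switch sign; this mirrors the corresponding step in Theorem \ref{thm1} but needs to be written out carefully. Everything else is essentially a transcription of the earlier argument with the new $\Theta$.
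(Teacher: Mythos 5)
Your proposal follows the paper's own proof essentially step for step: the same choices $\Theta(w)=w^\delta$ and $\Phi(w)=\beta/w^k$ in Lemma \ref{MM}, the same $Q$, starlikeness and admissibility checks, and the same reduction via $\mathfrak{B}^{-1}$ and Lemma \ref{subradi} to the pointwise bound $|h(e^{i\theta})|\geq R_0$. The concluding triangle-inequality step and the evaluation of the extrema of $|q(e^{i\theta})|$ and $|Q(e^{i\theta})|$ at $\theta=0$, which you rightly flag as the point requiring verification, are left entirely implicit in the paper (it simply asserts that the bound follows from \eqref{janowski-2}), so your write-up is, if anything, slightly more explicit than the published argument.
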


\begin{proof}
	Let $q(z)=((1+Az)/(1+Bz))^\gamma$. Taking $\Theta(w)=w^\delta$ and $\Phi(w)=\beta/w^k$ in Lemma \ref{MM}, the function  $Q:\mathbb{D}\to \mathbb{C}$ becomes 
$$Q(z)=\dfrac{\beta zq'(z)}{q^k(z)}=\dfrac{\beta\gamma(A-B)z(1+Bz)^{\gamma(k-1)-1}}{(1+Az)^{\gamma(k-1)+1}}$$
	is starlike  for $k\in\{-1,0,1\}$. Further, for the function $h$ defined by $h(z):=q^\delta(z)+Q(z)$, we have
$$\dfrac{zh'(z)}{Q(z)}=\dfrac{\delta}{\beta}q^{\delta+k-1}(z)+\dfrac{zQ'(z)}{Q(z)}.$$ 
Since $\RE q^{k}(z)>0$ for $k=\{-1,0,1\}$, thus for $\delta\in\{0,1\}$, we have $\RE zh'(z)/Q(z)>0$ whenever $k=\{-1,0,1\}$. Hence, by Lemma \ref{MM}, $$p^\delta(z)+\beta \frac{zp'(z)}{p^k(z)}\prec q^\delta(z)+\beta \frac{zq'(z)}{q^k(z)}=:h(z)$$ implies $p(z)\prec q(z)$. Now to prove our desired result, it is enough to show that  $\sqrt{1+\tanh{z}}\prec h(z).$
Let $w=\mathfrak{B}(z)=\sqrt{1+\tanh(z)}$. Then $\mathfrak{B}^{-1}(w)=1/2\log(w^2/(2-w^2))$. Therefore the subordination $\mathfrak{B}(z)\prec h(z)$ is equivalent to $z\prec\mathfrak{B}^{-1}(h(z))$. Thus we only need to show $|\mathfrak{B}^{-1}(h(e^{i\theta}))|\geq1$ where $\theta\in[0,2\pi)$ which is true if and only if
	\begin{gather*}
	\left|\log\left(\frac{\left(\left(\tfrac{1+Ae^{i\theta}}{1+Be^{i\theta}}\right)^{\delta\gamma}+\tfrac{\beta\gamma(A-B) e^{i\theta}(1+Be^{i\theta})^{\gamma(k-1)-1}}{(1+Ae^{i\theta})^{\gamma(k-1)+1}}\right)^2}{2-\left(\left(\tfrac{1+Ae^{i\theta}}{1+Be^{i\theta}}\right)^{\delta\gamma}+\tfrac{\beta\gamma(A-B) e^{i\theta}(1+Be^{i\theta})^{\gamma(k-1)-1}}{(1+Ae^{i\theta})^{\gamma(k-1)+1}}\right)^2}\right)\right|\geq2.
	\intertext{By Lemma \ref{subradi}, it follows that the above inequality holds whenever}
	\bigg|\left(\tfrac{1+Ae^{i\theta}}{1+Be^{i\theta}}\right)^{\delta\gamma}+\tfrac{\beta\gamma(A-B) e^{i\theta}(1+Be^{i\theta})^{\gamma(k-1)-1}}{(1+Ae^{i\theta})^{\gamma(k-1)+1}}\bigg|\geq R_{0},
	\end{gather*}
which is true whenever conditions in \ref{janowski-2} holds.
	\end{proof}

\section{Radius Results}

We begin with the following result:
	
\begin{theorem}\label{jan}
The sharp $\mathcal{S}^*_{\mathfrak{B}}-$radius for the class  $\mathcal{S}^*[A,B]$ is given by\\
$(i)$ $R_{\mathcal{S}^*_{\mathfrak{B}}}(\mathcal{S}^*[A,B])=\min\left\{1;\frac{\sqrt{2}e-\sqrt{1+e^2}}{A\sqrt{1+e^2}-\sqrt{2}eB}\right\}=:R_1$, whenever $-1\leq B<0<A\leq 1$.\\
$(ii)$ $R_{\mathcal{S}^*_{\mathfrak{B}}}(\mathcal{S}^*[A,B])=\begin{cases}
    R_1& R_1\leq R_0\\
    R_2& R_1>R_0,
\end{cases}$ whenever $0\leq B<A\leq1$,\\
where $$R_0:=\sqrt{\frac{\sqrt{2(1+e^2)}-e-1}{B(\sqrt{2(1+e^2)}A-Be-B)}}\;\;\text{and}\;\;R_2:=\min\left\{1,\frac{\sqrt{1+e^2}-\sqrt{2}}{A\sqrt{1+e^2}-\sqrt{2}B}\right\}.$$
\end{theorem}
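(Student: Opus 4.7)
The plan is to translate the radius problem into a disk-containment question to which Lemma \ref{circle} directly applies. Given $f\in\mathcal{S}^\ast[A,B]$, the subordination $zf'(z)/f(z)\prec(1+Az)/(1+Bz)$ together with Schwarz's lemma places $zf'(z)/f(z)$ inside the closed disk $\overline{D(c_r,\rho_r)}$ for $|z|\le r$, where
\[
c_r=\frac{1-ABr^2}{1-B^2r^2},\qquad \rho_r=\frac{(A-B)r}{1-B^2r^2}.
\]
Consequently $r^{-1}f(rz)\in\mathcal{S}^\ast_{\mathfrak B}$ for every $f\in\mathcal{S}^\ast[A,B]$ if and only if $\overline{D(c_r,\rho_r)}\subset\mathfrak{B}(\mathbb{D})$, and this containment of a disk centred on the real axis is exactly the situation addressed by Lemma \ref{circle}.

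Writing $S=\sqrt{2/(1+e^2)}$, $K=(1+e)/\sqrt{2(1+e^2)}$ and $E=e\sqrt{2/(1+e^2)}$, Lemma \ref{circle} reduces the containment to $c_r+\rho_r\le E$ when $c_r\ge K$ and to $c_r-\rho_r\ge S$ when $c_r\le K$. Using $1-B^2r^2=(1-Br)(1+Br)$, these collapse to the clean inequalities $(1+Ar)/(1+Br)\le E$ and $(1-Ar)/(1-Br)\ge S$ respectively, which solve to $r\le R_1$ and $r\le R_2$ with the expressions stated. The switching value $R_0$ is obtained by solving $c_r=K$ and produces precisely the formula in the theorem.

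For part (i), $B<0<A$ immediately gives $c_r>1>K$ throughout $(0,1]$, so only the right constraint is ever active and the sharp radius is $\min\{1,R_1\}=R_1$. For part (ii), a short derivative computation shows $r\mapsto c_r$ is non-increasing with $c_0=1$, crossing $K$ exactly at $r=R_0$; if $R_1\le R_0$ the right constraint binds first and yields the radius $R_1$, whereas if $R_1>R_0$ the disk enters the regime $c_r<K$ and the left constraint pins the radius at $R_2$. Sharpness in both parts follows from the Janowski extremal $F(z)=z(1+Bz)^{(A-B)/B}$ (or $F(z)=ze^{Az}$ when $B=0$), which satisfies $zF'(z)/F(z)=(1+Az)/(1+Bz)$ identically: at $z=R_1$ this expression equals $E=\mathfrak{B}(1)$, and at $z=-R_2$ it equals $S=\mathfrak{B}(-1)$, each point lying on $\partial\mathfrak{B}(\mathbb{D})$.

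The main obstacle is the case-split bookkeeping in part (ii): one must confirm that when $R_1>R_0$ one indeed has $R_2\ge R_0$, so that the answer $R_2$ is consistent with the regime $c_{R_2}\le K$ in which the left constraint really is the binding one. The key observation is the symmetry $E-K=K-S$, i.e.\ $K$ is the midpoint of the two real boundary points of $\mathfrak{B}(\mathbb{D})$; combined with the hypothesis $R_1>R_0$, this forces $\rho_{R_0}<E-K=K-S$ so that, by continuity, the left-constraint equation is saturated only at some $r>R_0$, which must be $R_2$. This step is the one place where genuine care is needed beyond routine algebra.
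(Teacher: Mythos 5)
Your proposal is correct and follows essentially the same route as the paper: bound $zf'(z)/f(z)$ by the Janowski disk $D(c_r,\rho_r)$, apply Lemma \ref{circle} with the case split on whether $c_r$ lies above or below $(1+e)/\sqrt{2(1+e^2)}$, and obtain sharpness from $f_{A,B}$ touching $\mathfrak{B}(\pm 1)$. Your closing observation --- that $E-K=K-S$ forces $R_2>R_0$ whenever $R_1>R_0$, so the answer $R_2$ really does lie in the regime where the left constraint binds --- is a point the paper's proof leaves implicit, and it is correctly handled.
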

\begin{proof}
Let $f\in\mathcal{S}^*[A,B].$ Then by  \cite{janowski}, we have \begin{equation}\label{jand}
    \left|\dfrac{zf'(z)}{f(z)}-c\right|\leq\dfrac{(A-B)r}{1-B^2r^{2}},
\end{equation} where $$c:=\frac{1-ABr^{2}}{1-B^2r^{2}}.$$
Case $(i)$: If $-1\leq B<0<A\leq 1$, then $c\geq1.$
 We observe that $f\in\mathcal{S}^*_{\mathfrak{B}},$ if  the disk (\ref{jand}) is contained in the disk (\ref{bdisk}). Thus, by Lemma \ref{circle}, it suffices to show that 
 $$\dfrac{(A-B)r}{1-B^2r^{2}}\leq e\sqrt{\dfrac{2}{1+e^2}}-\dfrac{1-ABr^{2}}{1-B^2r^{2}},$$
 which yields $$r\leq\frac{\sqrt{2}e-\sqrt{1+e^2}}{A\sqrt{1+e^2}-\sqrt{2}eB}.$$
Case $(ii)$: If $0\leq B<A\leq 1$, then $c\leq1.$ Further, if $R_1\leq R_0,$ then $c\geq(e+1)/\sqrt{2(1+e^2)}\simeq0.9077$ if and only 
if $r\leq R_0$. Therefore, in particular for $r\leq R_1,$ we deduce that $c\geq(e+1)/\sqrt{2(1+e^2)}$.  Hence by Lemma \ref{circle},
we see that $f\in\mathcal{S}^*_{\mathfrak{B}},$ if $$\dfrac{(A-B)r}{1-B^2r^{2}}\leq e\sqrt{\dfrac{2}{1+e^2}}-\dfrac{1-ABr^{2}}{1-B^2r^{2}},$$ i.e, if  $r\leq R_1.$ Now if $R_1> R_0,$ then $c\leq(e+1)/\sqrt{2(1+e^2)}$ if and only 
if $r\geq R_0$. Therefore, in particular for $r\geq R_1,$ we have $c\leq(e+1)/\sqrt{2(1+e^2)}$. Now again by Lemma \ref{circle}, we see that $f\in\mathcal{S}^*_{\mathfrak{B}}$ if
$$\dfrac{(A-B)r}{1-B^2r^{2}}\leq \dfrac{1-ABr^{2}}{1-B^2r^{2}}- \sqrt{\dfrac{2}{1+e^2}},$$ i.e, if  $r\leq R_2.$ The result follows with sharpness due to the function $f_{A,B}(z)$, given by \begin{equation*}
    f_{A,B}(z)=\begin{cases}
        z(1+B z)^{\frac{A-B}{B}};&B\neq0,\\
        z\exp(Az);&B=0,
    \end{cases}
\end{equation*}
which completes the proof.
\end{proof}

\begin{corollary}
The sharp $\mathcal{S}^*_{\mathfrak{B}}-$radius for $\mathcal{S}^*(\alpha)$ is $\frac{\sqrt{2}e-\sqrt{1+e^2}}{(1-2\alpha)\sqrt{1+e^2}+\sqrt{2}e}$, $\alpha\in[0,1)$. In particular, $R_{\mathcal{S}^*_{\mathfrak{B}}}(\mathcal{S}^*)=\frac{\sqrt{2}e-\sqrt{1+e^2}}{\sqrt{2}e+\sqrt{1+e^2}}\simeq0.1406$. The bound is sharp for $k_\alpha(z)=z/(1-z)^{2(1-\alpha)}$. 
\end{corollary}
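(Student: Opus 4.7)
The plan is to recognise $\mathcal{S}^*(\alpha)$ as the Janowski class $\mathcal{S}^*[1-2\alpha,-1]$ (since the classical subordinant $(1+(1-2\alpha)z)/(1-z)$ is exactly $(1+Az)/(1+Bz)$ with $A=1-2\alpha$, $B=-1$) and then invoke Theorem \ref{jan}. The first task is to determine which case of that theorem is relevant. The centre of the disk in \eqref{jand} evaluates to $c = (1+(1-2\alpha)r^2)/(1-r^2)$, and a one-line check yields $c>1$ for every $r\in(0,1)$ and every $\alpha\in[0,1)$ (equivalently, $\alpha<1$). So the inequality $c\geq 1$ that drives the proof of case (i) is satisfied throughout, even when $A=1-2\alpha\leq 0$, and the same disk-containment argument against $\mathfrak{B}(\mathbb{D})$ through Lemma \ref{circle} applies verbatim.

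Substituting $A=1-2\alpha$ and $B=-1$ directly into the case (i) formula gives
\[
    R_{\mathcal{S}^*_{\mathfrak{B}}}(\mathcal{S}^*(\alpha)) \;=\; \min\!\left\{1,\;\frac{\sqrt{2}\,e-\sqrt{1+e^2}}{(1-2\alpha)\sqrt{1+e^2}+\sqrt{2}\,e}\right\}.
\]
A short comparison $\sqrt{2}\,e-\sqrt{1+e^2} < (1-2\alpha)\sqrt{1+e^2}+\sqrt{2}\,e$, which reduces to $\alpha<1$, shows that the fraction is strictly less than $1$, so the minimum is realised by the fraction itself and the $\min$ drops out. Specialising to $\alpha=0$ then recovers the numerical value $R_{\mathcal{S}^*_{\mathfrak{B}}}(\mathcal{S}^*)\simeq 0.1406$.

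For sharpness, I would evaluate $zk_\alpha'(z)/k_\alpha(z) = (1+(1-2\alpha)z)/(1-z)$ at $z=R$, the candidate radius, and confirm by direct algebra that the value hits precisely $e\sqrt{2/(1+e^2)}$, the rightmost point of $\partial\mathfrak{B}(\mathbb{D})$ singled out in Lemma \ref{circle}. This shows that the image disk of $zk_\alpha'/k_\alpha$ over $|z|<R$ is internally tangent to $\mathfrak{B}(\mathbb{D})$ at that boundary point, so the radius cannot be enlarged. The only real subtlety — and the main thing to watch, if one reads the hypotheses of Theorem \ref{jan}(i) literally — is that case (i) is stated for $A>0$, whereas here $A=1-2\alpha$ can be zero or negative; however, as noted above, its proof uses only $c\geq 1$, which remains true for all $\alpha\in[0,1)$, so the extension is immediate.
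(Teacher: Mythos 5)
Your proposal is correct and follows the same route the paper intends: the corollary is obtained by substituting $A=1-2\alpha$, $B=-1$ into Theorem \ref{jan}(i) and checking that the resulting fraction is less than $1$, with sharpness coming from $zk_\alpha'(z)/k_\alpha(z)=(1+(1-2\alpha)z)/(1-z)$ hitting $\mathfrak{B}(1)=e\sqrt{2/(1+e^2)}$ at the critical radius. Your extra observation that case (i) extends to $A=1-2\alpha\leq0$ because its proof only uses $c=(1+(1-2\alpha)r^2)/(1-r^2)\geq1$ is a worthwhile clarification, since the paper's accompanying remark about replacing $(A,B)$ by $(-A,-B)$ does not by itself cover the range $\alpha\in[1/2,1)$.
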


\begin{remark}
    Note that for $A,B\in[-1,1]$ where $A\neq B$, $(1+Az)/(1+Bz)$ maps unit disk onto 
    $$H(\mathbb{D}):=\left\{w\in\mathbb{C}:\left|w-\dfrac{1-AB}{1-B^2}\right|<\dfrac{|A-B|}{1-B^2}\right\}.$$
   Clearly, $(1+Az)/(1+Bz)$ and  $(1-Az)/(1-Bz)$ represent the same region $H(\mathbb{D})$, hence the radius result for all the possibilities of $A$ and $B$ are incorporated in the Theorem \ref{jan}.  
\end{remark}

 \begin{theorem}
The sharp $\mathcal{S}^*_{\mathfrak{B}}-$radius for the classes   $\mathcal{S}^*_e$, $\mathcal{S}^*_{SG}$,  $\mathcal{S}^*_{\leftmoon}$ and $\mathcal{S}^*_{\varrho}$ are given by\\
$(i)$  $R_{\mathcal{S}^*_{\mathfrak{B}}}(\mathcal{S}^*_{e})=r_{e}=1+\log(\sqrt{2/(1+e^2)})\simeq0.28311$.\\
$(ii)$  $R_{\mathcal{S}^*_{\mathfrak{B}}}(\mathcal{S}^*_{SG})=r_{SG}=-\log(e^{-1}\sqrt{2(1+e^2)}-1)\simeq0.679492.$\\
$(iii)$ $R_{\mathcal{S}^*_{\mathfrak{B}}}(\mathcal{S}^*_{\leftmoon})=r_{\leftmoon}=(e^2-1)/(2e\sqrt{2(e^2+1)})\simeq0.2869.$\\
$(iv)$ $R_{\mathcal{S}^*_{\mathfrak{B}}}(\mathcal{S}^*_{\varrho})=r_{\varrho}\simeq0.253877,$ where $r_{\varrho}$ is the smallest positive root of the equation $$\sqrt{1+e^2}(1+r e^r)=\sqrt{2}e.$$
\end{theorem}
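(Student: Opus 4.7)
The plan is to handle all four parts uniformly, reducing each to a disk inclusion inside $\mathfrak{B}(\mathbb{D})$ and then invoking Lemma \ref{circle}. Each class is defined by a subordination $zf'(z)/f(z)\prec \phi(z)$ for a specific analytic $\phi$, namely $e^z$, $2/(1+e^{-z})$, $z+\sqrt{1+z^2}$, and $1+ze^z$, respectively. I would first apply the subordination principle to conclude that $p(\mathbb{D}_r)\subseteq \phi(\mathbb{D}_r)$ for $p(z)=zf'(z)/f(z)$, so that $f$ restricted to $|z|\le r$ belongs to $\mathcal{S}^*_{\mathfrak{B}}$ as soon as $\phi(\mathbb{D}_r)\subseteq \mathfrak{B}(\mathbb{D})$.

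Next, I would bound each image $\phi(\mathbb{D}_r)$ by a disk centered at $\phi(0)=1$. In each case one has the inclusion $\phi(\mathbb{D}_r)\subseteq\{w:|w-1|\le\phi(r)-1\}$: for $\phi(z)=e^z$ and $\phi(z)=1+ze^z$ this is immediate from $|e^w-1|\le e^{|w|}-1$, and for $\phi(z)=2/(1+e^{-z})=1+\tanh(z/2)$ and $\phi(z)=z+\sqrt{1+z^2}$ the analogous bounds are already available in \cite{pri} and \cite{raina}. Because $(1-e)^2\ge 0$ implies $\alpha_0=(1+e)/\sqrt{2(1+e^2)}\le 1$, the value $\alpha=1$ falls into the second branch of (\ref{ralpha}) in Lemma \ref{circle}, so the largest disk centered at $1$ contained in $\mathfrak{B}(\mathbb{D})$ has radius $R_0-1$, where $R_0:=e\sqrt{2/(1+e^2)}$.

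Consequently the disk $\{w:|w-1|\le\phi(r)-1\}$ lies inside $\mathfrak{B}(\mathbb{D})$ precisely when $\phi(r)\le R_0$, and the sharp radius is the unique positive solution of $\phi(r)=R_0$. Solving the four equations $e^r=R_0$, $2/(1+e^{-r})=R_0$, $r+\sqrt{1+r^2}=R_0$ and $1+re^r=R_0$ by routine algebra returns the stated values of $r_e$, $r_{SG}$, $r_{\leftmoon}$ and $r_\varrho$. Sharpness comes from the extremal functions $f_\phi$ satisfying $zf_\phi'(z)/f_\phi(z)=\phi(z)$: at $z=r_{\mathrm{class}}$ the boundary value $\phi(r_{\mathrm{class}})=R_0$ already lies on $\partial \mathfrak{B}(\mathbb{D})$, so the radius cannot be enlarged.

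The main obstacle will be verifying the disk inclusion $\phi(\mathbb{D}_r)\subseteq\{w:|w-1|\le\phi(r)-1\}$ in each case, particularly for $\phi(z)=z+\sqrt{1+z^2}$ where the argument is less transparent than the exponential-based estimates. As a fallback, one can bypass the intermediate disk and directly compare the boundary parametrisations $\partial\phi(\mathbb{D}_r)$ and (\ref{beanbary}), checking that the first contact between these two Jordan curves as $r$ grows occurs on the positive real axis; this reduces to the same extremal equation $\phi(r_{\mathrm{class}})=R_0$, so the final values are unchanged.
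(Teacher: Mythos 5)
Your treatment of parts (i), (iii) and (iv) matches the paper: in each case $\phi(z)-1$ has a power series whose modulus on $|z|\le r$ is bounded by $\phi(r)-1$ (for $e^z-1$ and $ze^z$ this is immediate from positive coefficients), the largest disk centred at $1$ inside $\mathfrak{B}(\mathbb{D})$ has radius $R_0-1$ by the second branch of (\ref{ralpha}) since $\alpha_0<1$, and sharpness follows because $\phi(r_{\mathrm{class}})=R_0=\mathfrak{B}(1)\in\partial\mathfrak{B}(\mathbb{D})$ is attained by the extremal function on the positive real axis. This is exactly the paper's argument.

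The genuine gap is in part (ii). Your claimed inclusion $\phi(\mathbb{D}_r)\subseteq\{w:|w-1|\le\phi(r)-1\}$ is \emph{false} for the sigmoid function: here $\phi(z)-1=\tanh(z/2)$, and on $|z|=r$ one has $|\tanh(ir/2)|=\tan(r/2)>\tanh(r/2)=\phi(r)-1$, so the maximum modulus of $\phi(z)-1$ is not attained at $z=r$ (numerically, at $r=r_{SG}\simeq0.6795$ the point $\phi(ir)=1+i\tan(r/2)\simeq1+0.353i$ lies outside the disk of radius $R_0-1\simeq0.327$). Worse, repairing the disk argument with the correct bound $\tan(r/2)\le R_0-1$ gives $r\lesssim0.632$, strictly smaller than the asserted sharp value $r_{SG}\simeq0.6795$ --- so no disk centred at $1$ can prove sharpness here; the sigmoid image protrudes vertically beyond that disk yet still sits inside the bean, which bulges out to $|\IM w|\simeq0.699$ near $\RE w=1$. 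The paper avoids this entirely: for $\mathcal{S}^*_{SG}$ it derives $\psi(r)\le\mathfrak{B}(1)$ only as a \emph{necessary} condition and then argues sufficiency by directly verifying the containment of the domain $\psi(\mathbb{D}_{r_{SG}})=\{w:|\log(w/(2-w))|<r_{SG}\}$ in $\mathfrak{B}(\mathbb{D})$ (illustrated by its Figure \ref{rsg}). Your proposed fallback --- comparing the boundary parametrisations directly and checking that first contact occurs on the positive real axis --- is the correct route for (ii) and should be promoted from fallback to the main argument for that case; as written, the primary argument for (ii) does not go through.
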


\begin{proof}
$(i)$  Let $f\in\mathcal{S}^*_{e},$ then $zf'(z)/f(z)\prec e^z$. Since for  $|z| \leq r$, we have $$\left|\dfrac{zf'(z)}{f(z)}-1\right|\leq e^r-1,$$ thus by lemma \ref{circle},  it becomes evident that for $r\leq r_{e},$ the disk described above will be contained within $\mathfrak{B}(\mathbb{D})$.  The sharpness of the result can be verified by the  function, given by
\begin{equation*}
	f_0(z)=z\exp\bigg(\int_0^z\dfrac{e^t-1}{t}dt\bigg).
\end{equation*}
    $(ii)$  If $f\in\mathcal{S}^*_{SG},$ then $zf'(z)/f(z)\prec2/(1+e^{-z})=:\psi(z)$.   We need to find the value of $r$ such that $zf'(z)/f(z)\subset\mathfrak{B}(\mathbb{D})$ for $z\in\mathbb{D}_{r}.$ It suffices to determine the value of $r$ that satisfies $\psi(\mathbb{D}_{r})\subset\mathfrak{B}(\mathbb{D})$. For it to hold, it is necessary that
$$\dfrac{2}{1+e^{-r}}=\psi(r)\leq\mathfrak{B}(1)=e\sqrt{\dfrac{2}{1+e^2}},$$
which gives $$r\leq-\log(e^{-1}\sqrt{2(1+e^2)}-1)=:r_{SG}.$$ In fact, the image of the disk $\mathbb{D}_{r_{SG}}$ under the function $\psi(z)$ lies inside the domain $\mathfrak{B}(\mathbb{D}),$ can be seen in the figure \ref{rsg}.\\
\begin{figure}[h]
        \centering
        \includegraphics[height=4cm,width=6cm]{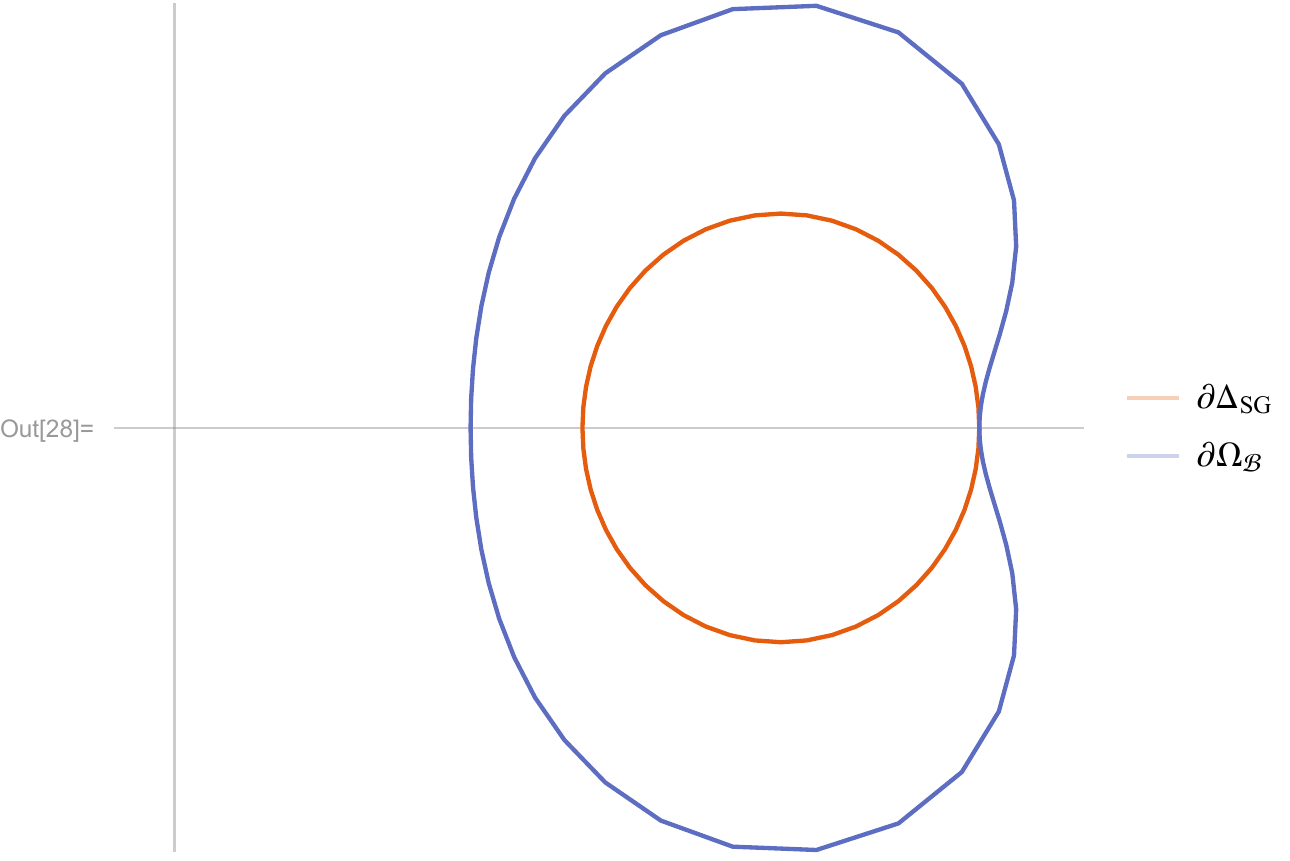}
        \caption{Sigmoid domain $\psi(\mathbb{D}_{r_{SG}})$ is sharply contained in $\mathfrak{B}(\mathbb{D}).$}
        \label{rsg}
    \end{figure}
     $(iii)$ Let $f\in\mathcal{S}^*_{\leftmoon},$ we have $zf'(z)/f(z)\prec z+\sqrt{1+z^2}$. Since for $|z|\leq r$ $$\left|\dfrac{zf'(z)}{f(z)}-1\right|\leq r+\sqrt{1+r^2}-1,$$ thus in view of Lemma \ref{circle} the above disk  lies inside the domain $\mathfrak{B}(\mathbb{D})$ if $r+\sqrt{1+r^2}-1\leq e\sqrt{2/(1+e^2)}-1$. This further implies $r\leq r_{\leftmoon}.$
     The result is sharp for the function
\begin{equation*}
  f_{\leftmoon}(z)=z\exp{\left(\int_0^z\dfrac{t+\sqrt{1+t^2}-1}{t}dt\right)}=z+z^2+\dfrac{3z^3}{4}+\dfrac{5z^4}{12}+\dfrac{z^5}{6}+\cdots.
\end{equation*}
$(iv)$ Let $f\in\mathcal{S}^*_{\varrho},$ then $zf'(z)/f(z)\prec1+ze^z$. For $|z|\leq r,$ we have
$$\left|\dfrac{zf'(z)}{f(z)}-1\right|\leq re^r.$$
Thus by lemma \ref{circle}, it is clear that for the above disk to lie in $\mathfrak{B}(\mathbb{D})$, we need $re^r\leq e\sqrt{2/(1+e^2)}-1,$ which implies that $r\leq r_{\varrho}.$ The sharpness of the result can be verified by the function $f_{\varrho}(z)=ze^{e^z-1}.$
\end{proof}

In 2019, Khatter et al. \cite{khatter} extended the previously defined classes $\mathcal{S}^*_{L}:=\mathcal{S}^*(\sqrt{1+z})$ and $\mathcal{S}^*_e:=\mathcal{S}^*(e^z)$ to encompass broader families of functions. Specifically, they introduced the classes $\mathcal{S}^*_{L}(\alpha):=\mathcal{S}^*(\alpha+(1-\alpha)\sqrt{1+z})$ and $\mathcal{S}^*_{\alpha,e}:=\mathcal{S}^*(\alpha+(1-\alpha)e^z)$, where $\alpha\in[0,1).$

\begin{theorem}
The sharp $\mathcal{S}^*_{\mathfrak{B}}-$radius for the classes   $\mathcal{S}^*_{L}(\alpha)$ and $\mathcal{S}^*_{e}(\alpha),$ are given by\\
$(i)$ $R_{\mathcal{S}^*_{\mathfrak{B}}}(\mathcal{S}^*_L(\alpha))=r_{L}(\alpha):=1-((\sqrt{2/(1+e^2)}-\alpha)/(1-\alpha))^2,$ where $\alpha\in[0,1-e/\sqrt{2(1+e^2)}).$\\
$(ii)$ $R_{\mathcal{S}^*_{\mathfrak{B}}}(\mathcal{S}^*_{e}(\alpha))=r_{e}(\alpha):=\log\left((e\sqrt{2/(1+e^2)}-\alpha)/(1-\alpha)\right),$ where $\alpha\in[0,1).$
\end{theorem}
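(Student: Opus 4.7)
The plan is to mimic, for each part, the radius template used in the preceding theorem: turn the subordination into an explicit image containment, force it into $\mathfrak{B}(\mathbb{D})$ using Lemma~\ref{circle} or the boundary data $c_1 := \sqrt{2/(1+e^2)}$ and $c_2 := e\sqrt{2/(1+e^2)}$, and exhibit an extremal function of the form $z\exp\int_0^z(q(t)-1)/t\,dt$.

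Part (ii) is a one-parameter shift of the earlier $\mathcal{S}^*_e$-argument. From $p(z) := zf'(z)/f(z) \prec \alpha + (1-\alpha)e^z$, the subordination combined with the elementary estimate $|e^s - 1| \leq e^{|s|} - 1$ gives $|p(z) - 1| \leq (1-\alpha)(e^r - 1)$ on $|z| \leq r$. Lemma~\ref{circle} (case (ii), $a = 1$) inscribes the disk $\{|w-1| < c_2 - 1\}$ in $\mathfrak{B}(\mathbb{D})$, so the sufficient condition $(1-\alpha)(e^r - 1) \leq c_2 - 1$ rearranges to $r \leq \log((c_2-\alpha)/(1-\alpha)) = r_e(\alpha)$. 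Sharpness follows from $f_e(z) := z\exp\int_0^z(\alpha+(1-\alpha)e^t-1)/t\,dt$, for which $zf_e'/f_e\big|_{z=r_e(\alpha)} = c_2 = \mathfrak{B}(1) \in \partial\mathfrak{B}(\mathbb{D})$.

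For part (i) the image of $\alpha + (1-\alpha)\sqrt{1+z}$ is not a disk, so I would replace the bound-on-$|p-1|$ step by a direct analysis of the Cassini oval $\Omega_r := \{w : |(w-\alpha)^2 - (1-\alpha)^2| \leq r(1-\alpha)^2\}$, which emerges from squaring the subordination $(p-\alpha)/(1-\alpha) \prec \sqrt{1+z}$ and which contains $p(\mathbb{D}_r)$. The leftmost real point of $\Omega_r$ is $q(-r) = \alpha + (1-\alpha)\sqrt{1-r}$, and since $\mathfrak{B}(\mathbb{D}) \cap \mathbb{R} = [c_1, c_2]$, the containment $\Omega_r \subset \mathfrak{B}(\mathbb{D})$ forces $q(-r) \geq c_1$, which solves to $r \leq 1 - ((c_1-\alpha)/(1-\alpha))^2 = r_L(\alpha)$. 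For the reverse direction at $r = r_L(\alpha)$, the rightmost real point $R$ satisfies $(R-\alpha)^2 = 2(1-\alpha)^2 - (c_1-\alpha)^2$, and the identity $c_1^2 + c_2^2 = 2$ immediately gives $R \leq c_2$ for every $\alpha \geq 0$; a short differentiation also shows $\max_{|z|\leq r}|\IM\sqrt{1+z}| = r/2$ (attained at $\cos\theta = -r/2$), so $|\IM q(z)| \leq (1-\alpha)r/2$, well below the bound $0.69949$ from Lemma~\ref{bds}(ii) under the hypothesis $\alpha \in [0, 1 - e/\sqrt{2(1+e^2)})$. Sharpness is witnessed by $f_L(z) := z\exp\int_0^z(q(t)-1)/t\,dt$, for which $zf_L'/f_L\big|_{z=-r_L(\alpha)} = c_1 \in \partial\mathfrak{B}(\mathbb{D})$.

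The main obstacle is the sufficiency direction in part (i): the necessary condition $q(-r) \geq c_1$ only pins down the leftmost real point of the Cassini oval, and one must additionally certify that the rest of the non-convex boundary $\partial\Omega_{r_L(\alpha)}$ does not escape $\mathfrak{B}(\mathbb{D})$ through the top, bottom, or right lobe of the bean. The restriction $\alpha < 1 - e/\sqrt{2(1+e^2)}$ is the natural threshold that makes this pointwise comparison, carried out against the explicit parameterization~(\ref{beanbary}) of $\partial\mathfrak{B}(\mathbb{D})$, go through cleanly using only the scalar bounds of Lemma~\ref{bds}.
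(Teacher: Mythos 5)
Your part (ii) is correct and is in fact tighter than the paper's own argument. The paper only extracts the necessary condition $\alpha+(1-\alpha)e^{r}\leq \mathfrak{B}(1)$ from the rightmost real point and then defers sufficiency to a figure; you close this analytically by trapping $\psi_{e,\alpha}(\mathbb{D}_r)$ in the disk $\{|w-1|<(1-\alpha)(e^{r}-1)\}$ and applying Lemma~\ref{circle} with center $1$ (which lies in the second branch of (\ref{ralpha}), giving inscribed radius $e\sqrt{2/(1+e^2)}-1$). Since this sufficient condition coincides exactly with the necessary one, sharpness at $z=r_e(\alpha)$ follows, and your extremal function is the right one. (A caveat inherited from the statement itself, not from your proof: $r_e(\alpha)>1$ once $\alpha$ exceeds roughly $0.81$, so the radius should be capped at $1$; neither you nor the paper addresses this.)

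For part (i) you have not closed the gap you yourself flag, and it is a genuine one --- though it is the very same gap the paper leaves open, since the paper computes only the leftmost real point $\psi_{L,\alpha}(-r)\geq\sqrt{2/(1+e^2)}$ and then asserts containment ``as can be seen in the figure.'' Your extra checks (rightmost real point via $c_1^2+c_2^2=2$, and $|\operatorname{Im}\,q(z)|\leq(1-\alpha)r/2<0.69949$) are correct and worthwhile, but they cannot certify $\Omega_{r_L(\alpha)}\subset\mathfrak{B}(\mathbb{D})$: the bean is not convex and is not the intersection of the strips cut out by Lemma~\ref{bds}. Its boundary meets the real axis at $\operatorname{Re}w=e\sqrt{2/(1+e^2)}\simeq1.327$ but bulges out to $\operatorname{Re}w\simeq1.388$ just off the axis and then retreats to $\operatorname{Re}w\simeq1.006$ at its highest point $\operatorname{Im}w\simeq0.699$; so a symmetric region can satisfy all three scalar bounds and still protrude through the upper or lower part of the boundary at intermediate heights. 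The sentence ``goes through cleanly using only the scalar bounds of Lemma~\ref{bds}'' is precisely the step that must be carried out, e.g.\ by the distance-function comparison against the parameterization (\ref{beanbary}) that the paper uses in Lemma~\ref{circle} and in the Cassinian-oval inclusion theorem, or by inscribing $\Omega_{r_L(\alpha)}$ in a disk $\{|w-a|<r_a\}$ covered by Lemma~\ref{circle}. Until that is done, your part (i) is a necessity argument plus heuristics --- no weaker than the paper's, but not a proof of shar` sufficiency.
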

\begin{proof}
$(i)$ If $f\in\mathcal{S}^*_L(\alpha),$ then $zf'(z)/f(z)\prec\alpha+(1-\alpha)\sqrt{1+z}=:\psi_{L,\alpha}(z)$. We need to find the value of $r$ such that $zf'(z)/f(z)\subset\mathfrak{B}(\mathbb{D})$ for $z\in\mathbb{D}_{r}.$ It suffices to determine the value of $r$ that satisfies $\psi_{L,\alpha}(\mathbb{D}_{r})\subset\mathfrak{B}(\mathbb{D})$. For it to hold, it is necessary that
$$\sqrt{\dfrac{2}{1+e^2}}=\mathfrak{B}(-1)\leq\psi_{L,\alpha}(-r)=\alpha+(1-\alpha)\sqrt{1-r},$$
which gives $$r\leq1-\left(\dfrac{1}{1-\alpha}\left(\sqrt{\dfrac{2}{1+e^2}}-\alpha\right)\right)^2=:r_{L}(\alpha).$$ In fact, the image of the disk $\mathbb{D}_{r_{L}(\alpha)}$ under the function $\psi_{L,\alpha}(z)$ lies inside the domain $\mathfrak{B}(\mathbb{D}),$ can be seen in the figure \ref{rl}, for some particular cases of $\alpha$.
\begin{figure}[ht]
    \centering
    \subfloat[\centering]{\includegraphics[height=4.5cm,width=6cm]{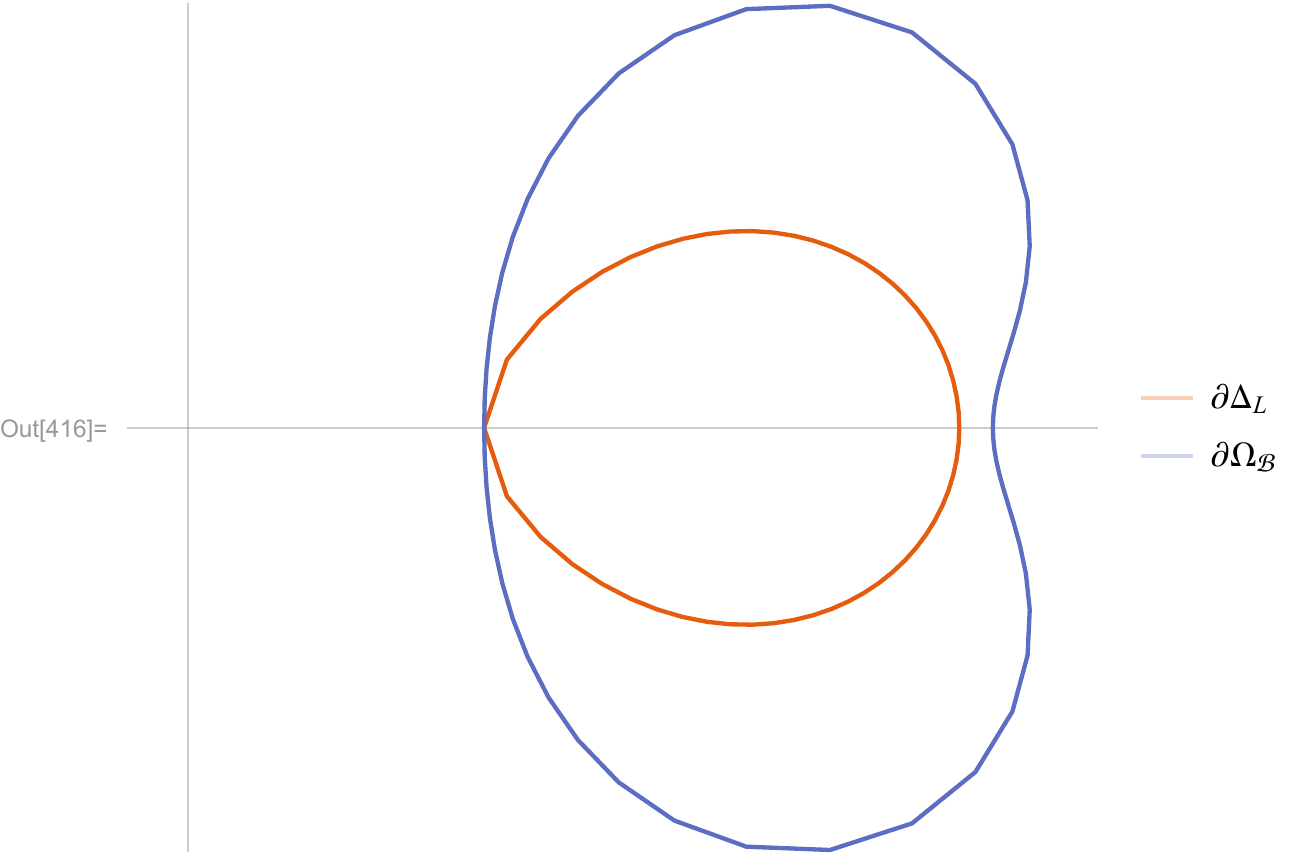}}
\qquad
   \subfloat[ \centering]{\includegraphics[height=4.5cm,width=6cm]{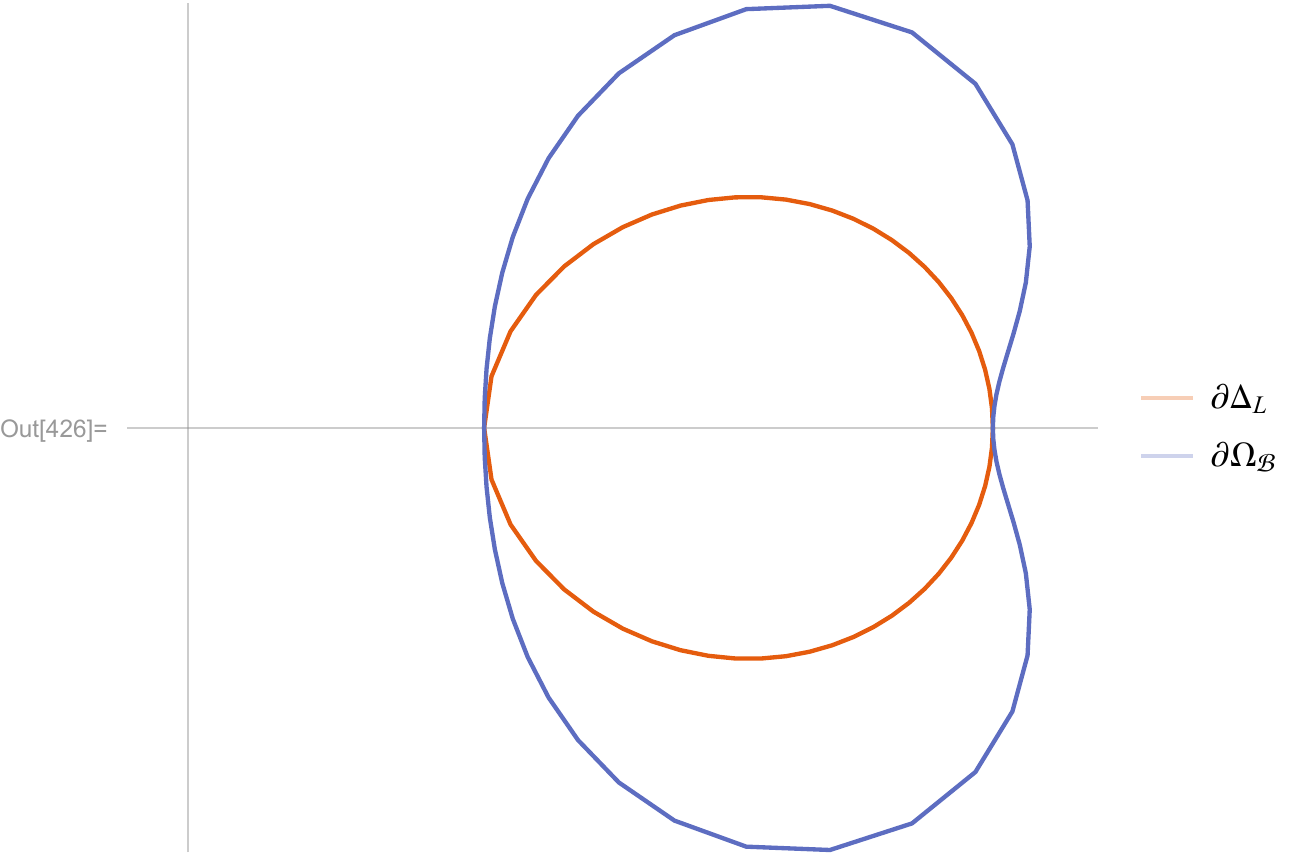}}
    \caption{(A) Sharpness for $\alpha=0.3$, $\psi_{L,0.3}(\partial\mathbb{D}_{r_{L}(0.3)})\subseteq\mathfrak{B}(\mathbb{D})$ and (B) Sharpness for $\alpha=0$, $\psi_{L,0}(\partial\mathbb{D}_{r_{L}(0)})\subseteq\mathfrak{B}(\mathbb{D})$.}
    \label{rl}
\end{figure}   Thus the result holds. \\

$(ii)$ If $f\in\mathcal{S}^*_{\alpha,e},$ then $zf'(z)/f(z)\prec\alpha+(1-\alpha)e^z=:\psi_{e,\alpha}(z)$. We need to find the value of $r$ such that $zf'(z)/f(z)\subset\mathfrak{B}(\mathbb{D})$ for $z\in\mathbb{D}_{r}.$ It suffices to determine the value of $r$ that satisfies $\psi_{e,\alpha}(\mathbb{D}_{r})\subset\mathfrak{B}(\mathbb{D})$. For it to hold, it is necessary that
$$\alpha+(1-\alpha)e^r=\psi_{L,\alpha}(r)\leq\mathfrak{B}(1)=e\sqrt{\dfrac{2}{1+e^2}},$$
which gives $$r\leq\log\left(\dfrac{1}{1-\alpha}\left(e\sqrt{\dfrac{2}{1+e^2}}-\alpha\right)\right)=:r_{e}(\alpha).$$ In fact, the image of the disk $\mathbb{D}_{r_{e}(\alpha)}$ under the function $\psi_{e,\alpha}(z)$ lies inside the domain $\mathfrak{B}(\mathbb{D}),$ can be seen in the figure \ref{ex1}, for some particular cases of $\alpha$.
\begin{figure}[ht]
    \centering
    \subfloat[\centering]{\includegraphics[height=4.5cm,width=6cm]{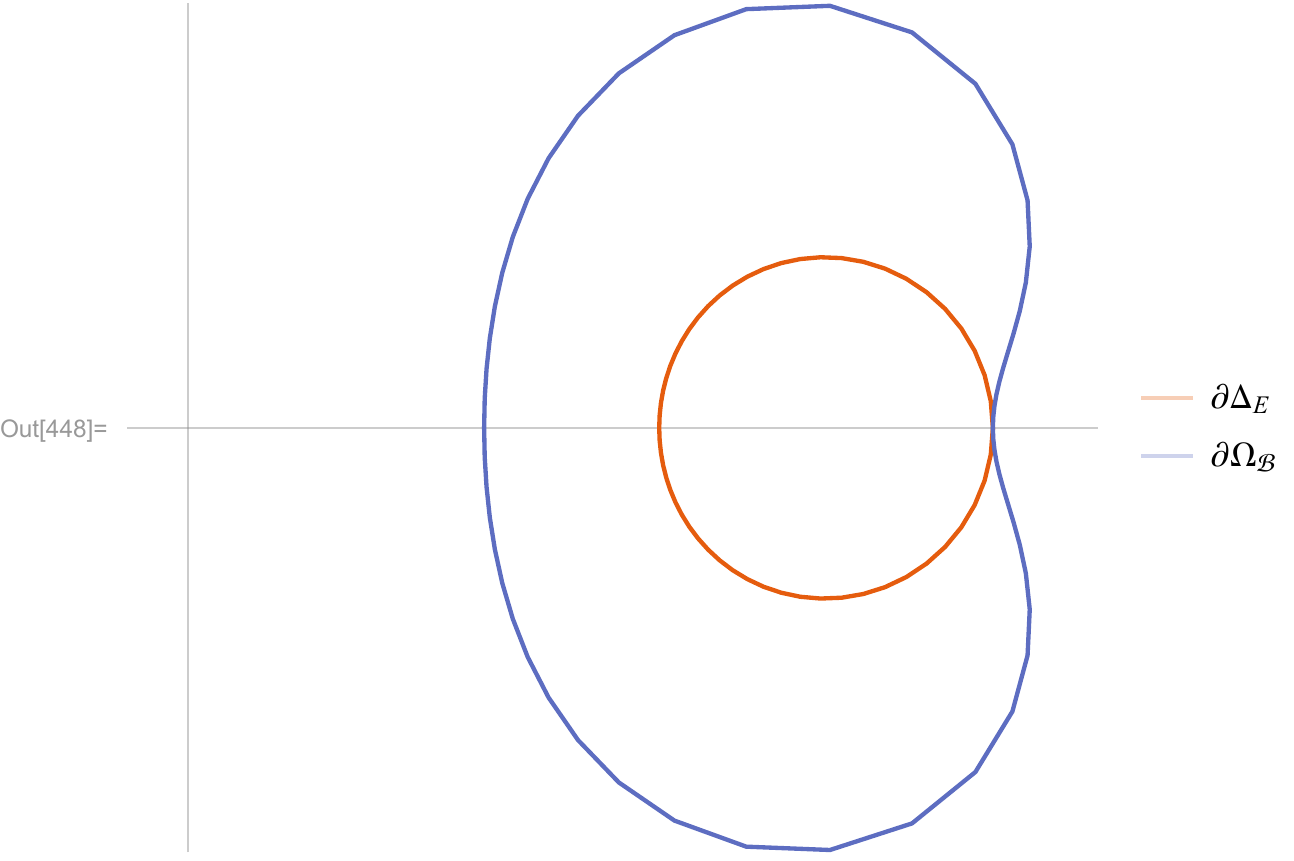}}
\qquad
   \subfloat[ \centering]{\includegraphics[height=4.5cm,width=6cm]{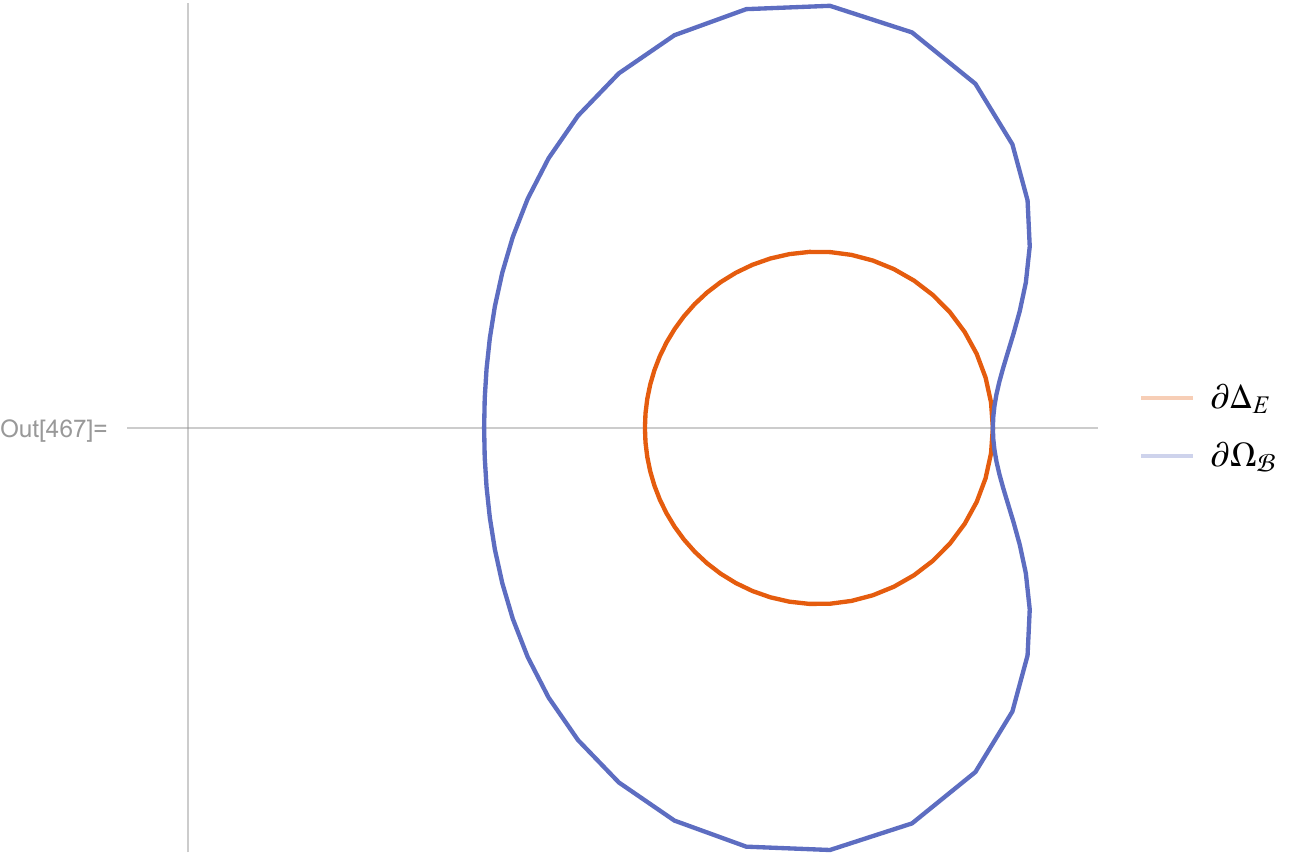}}
    \caption{(A) Sharpness for $\alpha=0.3$, $\psi_{e,0.3}(\partial\mathbb{D}_{r_{e}(0.3)})\subseteq\mathfrak{B}(\mathbb{D})$ and (B) Sharpness for $\alpha=0$, $\psi_{e,0}(\partial\mathbb{D}_{r_{e}(0)})\subseteq\mathfrak{B}(\mathbb{D})$.}
    \label{ex1}
\end{figure}   Thus the result holds. 
\end{proof}

Before we proceed to our next result, we need to recall the following classes:\\
For $0\leq\alpha<1,$ $\mathcal{BS}^*(\alpha):=\{f\in\mathcal{A}:zf'(z)/f(z)\prec1+z/(1-\alpha z^2)\}$ and $\mathcal{S}^*_{cs}(\alpha):=\{f\in\mathcal{A}:zf'(z)/f(z)\prec1+z/((1-z)(1+\alpha z))\}$ defined in \cite{kargar} and \cite{masih} by Kargar et al. and Masih et al. respectively. 
Apart from that, Masih and Kanas \cite{limacon} extensively studied the class $\mathcal{ST}^*_{L}(s)$, which is associated with the Lima\c{c}on of Pascal, defined as $\{f\in\mathcal{A}:zf'(z)/f(z)\prec\mathbb{L}_s(z)\}$, where $\mathbb{L}_s(z)=(1+sz)^2$ and $s\in[-1,1]\backslash\{0\}$.
\begin{theorem}
The sharp $\mathcal{S}^*_{\mathfrak{B}}-$radius for the classes $\mathcal{BS}^*(\alpha),$  $\mathcal{S}^*_{cs}(\alpha)$ and $\mathcal{ST}_L(s)$ are given by\\
$(i)$ $R_{\mathcal{S}^*_{\mathfrak{B}}}(\mathcal{BS}^*(\alpha))=r_{BS}(\alpha)=1-\sqrt{1+4\alpha(1-e\sqrt{2/(1+e^2)})^2}/(2\alpha(1-e\sqrt{2/(1+e^2)})),$ where $\alpha\in[0,1).$\\
$(ii)$ $R_{\mathcal{S}^*_{\mathfrak{B}}}(\mathcal{S}^*_{cs}(\alpha))=r_{cs}(\alpha)$, where $r_{cs}(\alpha)$ is the smallest positive root of
$$\dfrac{(1+e^2)(1+\alpha r-\alpha r^2)}{(1-r)(1+\alpha r)}-e\sqrt{2(1+e^2)}=0,$$ where $\alpha\in[0,1).$\\
$(iii)$ $R_{\mathcal{S}^*_{\mathfrak{B}}}(\mathcal{ST}_L(s))=\min\{1,r^*\}$, where $r^*$ is the smallest positive root of $$(1+e^2)(1+sr)^2-e\sqrt{2(1+e^2)}=0,$$ where $|s|\leq1\backslash\{0\}.$
\end{theorem}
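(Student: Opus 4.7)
The three parts share a unified strategy. Given $f$ in any of the classes, write $zf'(z)/f(z)=\phi(\omega(z))$ for the defining function $\phi$ and some Schwarz $\omega$; then for $|z|\leq r<1$ we have $|zf'(z)/f(z)-1|\leq\max_{|w|\leq r}|\phi(w)-1|=:M_{\phi}(r)$. Thus $f\in\mathcal{S}^{*}_{\mathfrak{B}}$ whenever the closed disk about $1$ of radius $M_{\phi}(r)$ lies in $\mathfrak{B}(\mathbb{D})$. Applying Lemma \ref{circle} at $\alpha=1$, which falls in the second case since $(1+e)/\sqrt{2(1+e^{2})}<1<e\sqrt{2/(1+e^{2})}$, the largest such disk has radius $c:=e\sqrt{2/(1+e^{2})}-1$. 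So the sharp condition in each part is $M_{\phi}(r)\leq c$, and sharpness is witnessed by the natural extremal $f_{\phi}(z)=z\exp\int_{0}^{z}(\phi(t)-1)/t\,dt$, for which $zf'_{\phi}/f_{\phi}=\phi$ evaluated at the real point on $|z|=r^{*}$ where $M_{\phi}$ is achieved gives the boundary value $1+c=\mathfrak{B}(1)\in\partial\mathfrak{B}(\mathbb{D})$.

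For part $(i)$, with $\phi_{BS}(w)-1=w/(1-\alpha w^{2})$, parameterizing $w=re^{i\theta}$ yields $|1-\alpha w^{2}|=|1-\alpha r^{2}e^{2i\theta}|\geq 1-\alpha r^{2}$, with equality at $\theta\in\{0,\pi\}$, so $M_{\phi_{BS}}(r)=r/(1-\alpha r^{2})$. The condition $r/(1-\alpha r^{2})\leq c$ is the quadratic $c\alpha r^{2}+r-c\leq 0$, whose positive root, after rationalization, is exactly the stated $r_{BS}(\alpha)$. For part $(iii)$, with $\phi_{L}(w)-1=sw(2+sw)$, the triangle inequality (with equality at $w=\pm r$ depending on $\mathrm{sgn}(s)$) gives $M_{\phi_{L}}(r)=|s|r(2+|s|r)=(1+|s|r)^{2}-1$; then $M_{\phi_{L}}(r)\leq c$ becomes $(1+|s|r)^{2}\leq 1+c$, equivalent to the stated polynomial equation for the smallest positive root $r^{*}$, while the outer $\min\{1,r^{*}\}$ absorbs the case $r^{*}>1$ arising for small $|s|$.

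Part $(ii)$ is the principal obstacle. With $\phi_{cs}(w)-1=w/((1-w)(1+\alpha w))$, evaluating $M_{\phi_{cs}}(r)=r/\min_{|w|=r}(|1-w||1+\alpha w|)$ forces the minimisation of $|1-w|^{2}|1+\alpha w|^{2}$ over $|w|=r$. Writing $w=re^{i\theta}$ and expanding as a polynomial in $x:=\cos\theta$ produces $(1+r^{2})(1+\alpha^{2}r^{2})+2r(\alpha-1)(1-\alpha r^{2})x-4\alpha r^{2}x^{2}$, which is concave down in $x$ (leading coefficient $-4\alpha r^{2}<0$), so its minimum on $[-1,1]$ is attained at an endpoint: either $(1-r)^{2}(1+\alpha r)^{2}$ at $x=1$ or $(1+r)^{2}(1-\alpha r)^{2}$ at $x=-1$. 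The identity $(1-r)(1+\alpha r)-(1+r)(1-\alpha r)=2r(\alpha-1)<0$ for $\alpha\in[0,1)$ selects the first, giving $M_{\phi_{cs}}(r)=r/((1-r)(1+\alpha r))$. Adding $1$ to the condition $M_{\phi_{cs}}(r)\leq c$ and clearing by $1+e^{2}$ recovers the stated equation; since its left side equals $1$ at $r=0$ and blows up as $r\to 1^{-}$, the smallest positive root furnishes the sharp $\mathcal{S}^{*}_{\mathfrak{B}}$-radius.
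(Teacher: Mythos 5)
Your proof is correct and follows essentially the same route as the paper: bound $|zf'(z)/f(z)-1|$ on $|z|\le r$ by the maximum modulus of $\phi(w)-1$ and require the resulting disk centred at $1$ to lie in the disk $|w-1|<e\sqrt{2/(1+e^2)}-1$ supplied by Lemma \ref{circle}. You additionally work out the endpoint minimisation giving $M_{\phi_{cs}}(r)=r/((1-r)(1+\alpha r))$ and the sharpness at the boundary point $\mathfrak{B}(1)$, details the paper merely asserts.
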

\begin{proof}
$(i)$ Let $f\in\mathcal{BS}^*(\alpha).$ Therefore $zf'(z)/f(z)\prec1+z/(1-\alpha z^2)$. Since
$$\left|\dfrac{zf'(z)}{f(z)}-1\right|\leq\dfrac{r}{1-\alpha r^2}, \qquad |z|\leq r$$ 
thus in view of Lemma \ref{circle}, the above disk  lies inside the domain $\mathfrak{B}(\mathbb{D})$ whenever $r/(1-\alpha r^2)\leq e\sqrt{2/(1+e^2)}-1$. This further implies $r\leq r_{BS}(\alpha).$ \\
$(ii)$ Let $f\in\mathcal{S}^*_{cs}(\alpha),$ then $zf'(z)/f(z)\prec1+ z/((1-z)(1+\alpha z))$. Since 
$$\left|\dfrac{zf'(z)}{f(z)}-1\right|\leq\dfrac{r}{(1-r)(1+\alpha r)},\qquad |z|\leq r $$
thus by lemma \ref{circle}, it is clear that for the above disk to lie in $\mathfrak{B}(\mathbb{D})$, we need $r/((1-r)(1+\alpha))\leq e\sqrt{2/(1+e^2)}-1,$ which implies that $r\leq r_{cs}(\alpha).$ \\
$(iii)$ Let $f\in\mathcal{ST}_L(s).$ Therefore $zf'(z)/f(z)\prec(1+sz)^2$. Since 
$$\left|\dfrac{zf'(z)}{f(z)}-1\right|\leq(1+sr)^2-1,\qquad |z|\leq r$$ thus in view of Lemma \ref{circle} the above disk  lies inside the domain $\mathfrak{B}(\mathbb{D})$ if $(1+sr)^2\leq e\sqrt{2/(1+e^2)}$. This further implies $r\leq\min\{1,r^*\}.$ 
\end{proof}

\begin{theorem}
		If $f\in\mathcal{S}^*_{\mathfrak{B}}$. Then
		\begin{itemize}
			\item [$(i)$] $f\in\mathcal{C}(\alpha)$ in $|z|<r_\alpha$, where $r_\alpha$ is the smallest positive root of the equation
			\begin{equation}\label{c-alpha}
			1-\tanh{r}=\left(\alpha+\dfrac{r(1+\tan{r})}{2(1-r^2)}\right)^2,\quad \alpha\in (0,1].
			\end{equation}
			\item [$(ii)$] $f\in\mathcal{S^{*}}(\alpha)$ in $|z|<(\log{({(2-\alpha^2)}/{\alpha^2})})/2$, where $\alpha\in(\sqrt{{2}/{(1+e^{2})}},1)$.
		\end{itemize}
	\end{theorem}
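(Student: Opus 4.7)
The plan is to let $p(z)=zf'(z)/f(z)$, note that $p\prec\mathfrak{B}$ and hence $p=\mathfrak{B}\circ w$ for some Schwarz function $w$, and invoke the identity $1+zf''(z)/f'(z)=p(z)+zp'(z)/p(z)$. Part~(ii) will require only a lower bound on $\RE p$, whereas part~(i) additionally needs an upper bound on $|zp'(z)/p(z)|$.

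For part~(ii), subordination together with Remark~\ref{minrem} gives $\RE p(z)\geq\mathfrak{B}(-r)=\sqrt{1-\tanh r}$ for $|z|\leq r$. Requiring this to exceed $\alpha$ is equivalent to $\tanh r<1-\alpha^2$, which rearranges to $r<\tfrac{1}{2}\log((2-\alpha^2)/\alpha^2)$; the hypothesis $\alpha>\sqrt{2/(1+e^2)}$ is precisely what forces this right-hand side to lie in $(0,1)$. Sharpness is witnessed by the extremal function $f_{0}$ of \eqref{5} evaluated at $z=-r$.

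For part~(i), the same lower bound $\RE p(z)\geq\sqrt{1-\tanh r}$ still applies. For the second summand, I would differentiate the identity $\mathfrak{B}(z)^2=1+\tanh z$ to obtain the clean formula $\mathfrak{B}'(z)/\mathfrak{B}(z)=(1-\tanh z)/2$, which gives $zp'(z)/p(z)=(z/2)(1-\tanh w(z))\,w'(z)$. The Schwarz--Pick inequality yields $|w'(z)|\leq 1/(1-|z|^2)$; the triangle inequality gives $|1-\tanh w(z)|\leq 1+|\tanh w(z)|$; and the estimate $|\tanh w|\leq \tan|w|$ (valid whenever $|w|<\pi/2$, which is automatic here since $|w(z)|\leq|z|<1$) bounds $|\tanh w(z)|\leq\tan r$. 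Assembling these three estimates yields $|zp'(z)/p(z)|\leq r(1+\tan r)/(2(1-r^{2}))$, so $\RE(1+zf''/f')>\alpha$ is ensured whenever $\sqrt{1-\tanh r}>\alpha+r(1+\tan r)/(2(1-r^{2}))$, i.e.\ whenever $r<r_{\alpha}$.

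The only step that is not entirely routine is the inequality $|\tanh w|\leq \tan|w|$ for $|w|<\pi/2$. I would verify it by writing $\tanh(x+iy)=(\tanh x+i\tan y)/(1+i\tanh x\tan y)$, from which one obtains the closed form $|\tanh(re^{i\phi})|^{2}=(\tanh^{2}(r\cos\phi)+\tan^{2}(r\sin\phi))/(1+\tanh^{2}(r\cos\phi)\tan^{2}(r\sin\phi))$, and then showing that as a function of $\phi$ this expression attains its maximum $\tan^{2} r$ at $\phi=\pm\pi/2$. All remaining ingredients---Ma--Minda subordination, Schwarz--Pick, and the minimum principle used in Remark~\ref{minrem}---are already in place in the excerpt.
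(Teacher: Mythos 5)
Your argument is correct and follows essentially the same route as the paper: part (ii) is identical (minimum of $\RE\mathfrak{B}$ on $|z|=r$ at $z=-r$, then solve $\sqrt{1-\tanh r}\geq\alpha$), and in part (i) your identity $zp'(z)/p(z)=\tfrac{z}{2}(1-\tanh w(z))w'(z)$ is exactly the paper's term $z\sech^{2}w(z)\,w'(z)/(2(1+\tanh w(z)))$, estimated with the same Schwarz--Pick bound and the same inequality $|\tanh w|\leq\tan|w|$ (which the paper uses without proof and you verify). The only cosmetic difference is that the paper also records that $h(r)=\sqrt{1-\tanh r}-r(1+\tan r)/(2(1-r^{2}))$ decreases from $h(0)=1$, which justifies taking the smallest positive root in \eqref{c-alpha}.
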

	
	\begin{proof}
		Let $f\in\mathcal{S}^*_{\mathfrak{B}}$, then there exist a  Schwarz function $w(z)$ such that
		\begin{equation}\label{7}
		\frac{zf'(z)}{f(z)}=\sqrt{1+\tanh{w(z)}}.
		\end{equation}
	$(i)$ By differentiating \eqref{7}, we obtain
		\begin{equation*}
		1+\frac{zf''(z)}{f'(z)}=\sqrt{1+\tanh{w(z)}}+\frac{z\sech^2{w(z)}w'(z)}{2(1+\tanh{w(z)})}.
		\end{equation*}
		Taking the real parts in the above equation and considering $w(z)=R e^{i \theta}$ with  $R\leq r=|z|$, we get
		\begin{align*}
		\RE\left(1+\frac{zf''(z)}{f'(z)}\right)
		&=\RE\sqrt{1+\tanh{w(z)}}+\RE\bigg(\frac{z\sech^2{w(z)}w'(z)}{2(1+\tanh{w(z)})}\bigg)\\
		&\geq\RE\sqrt{1+\tanh{w(z)}}-\frac{|z||\sech^2{w(z)}||w'(z)|}{2|1+\tanh{w(z)}|}\\
		&\geq\sqrt{1+\tanh{R}}-\frac{r(1+|\tanh{w(z)}|)(1-|w(z)|^2)}{2(1-|z|^2)}\\
		&\geq\sqrt{1-\tanh{r}}-\frac{r(1+\tan{R})(1-R^2)}{2(1-r^2)}\\
		&\geq\sqrt{1-\tanh{r}}-\frac{r(1+\tan{r})}{2(1-r^2)}
		=:h(r).
		\end{align*}
  By computing the derivative of $h(r)$, it is evident that $h(r)$ is a decreasing function in the interval $[0,1)$ with $h(0)=1$. Consequently, for $f\in\mathcal{C}(\alpha)$, it is necessary that $h(r)\geq\alpha$, which follows directly from \eqref{c-alpha}. \\
		$(ii)$ 
		For $f\in\mathcal{S}^*_{\mathfrak{B}}$, observe that
		\begin{align*}
		\RE\frac{zf'(z)}{f(z)}=\RE\sqrt{1+\tanh{w(z)}} \geq \sqrt{1-\tanh{|z|}}=\sqrt{\frac{2}{1+e^{2|z|}}}.
		\end{align*}
		Given that $\tanh|z|$ is an increasing function, for $|z|<\frac{1}{2}\log{\frac{2-\alpha^2}{\alpha^2}}=\arctanh(1-\alpha^2)$, $\alpha\in(\sqrt{{2}/{(1+e^{2})}},1)$, we have $\tanh{|z|}\leq1-\alpha^2$, leading to
\begin{equation*}
\RE\frac{zf'(z)}{f(z)}\geq\sqrt{1-(1-\alpha^2)}=\alpha.
\end{equation*}
The result is sharp for the function $f_{0}$ defined in \eqref{5}.
	\end{proof}

\end{document}